\newtheorem{assumption}[theorem]{Assumption}
\newtheorem{remark}[theorem]{Remark}
\newcommand{\figref}[1]{{Figure~\ref{#1}}}
\newcommand{\thmref}[1]{{Theorem~\ref{#1}}}
\newcommand{\lemref}[1]{{Lemma~\ref{#1}}}
\newcommand{\secref}[1]{{Section~\ref{#1}}}
\newcommand{\assref}[1]{{Assumption~\ref{#1}}}
\newcommand{\propref}[1]{{Proposition~\ref{#1}}}
\title{Magnus-type Integrator for the Finite Element Discretization of Semilinear Parabolic non-Autonomous SPDEs Driven by  multiplicative noise}
\author{ Antoine Tambue \footnotemark[1],\footnotemark[4],\footnotemark[5], \footnotemark[6]
\and Jean Daniel Mukam \footnotemark[2],\footnotemark[3] }
\begin{document}
\maketitle
\renewcommand{\thefootnote}{\fnsymbol{footnote}}
\footnotetext[2]{Fakult\"{a}t f\"{u}r Mathematik, Technische Universit\"{a}t Chemnitz, 09126 Chemnitz, Germany}
\footnotetext[3]{\texttt{jean.d.mukam@aims-senegal.org}}
\footnotetext[4]{\texttt{antonio@aims.ac.za}. Corresponding author.}
\footnotetext[1]{Department of Computing Mathematics and Physics,  Western Norway University of Applied Sciences, Inndalsveien 28, 5063 Bergen.
Center for Research in Computational and Applied Mechanics (CERECAM), and Department of Mathematics and Applied Mathematics, University of Cape Town,
7701 Rondebosch, South Africa. The African Institute for Mathematical Sciences(AIMS) of South Africa  }
%\footnotetext[5]{}
%\footnotetext[6]{}
\renewcommand{\thefootnote}{\arabic{footnote}}

% page heading
\pagestyle{myheadings}
\thispagestyle{plain}
\markboth{ A.~Tambue  and J. D.~ Mukam }{MANGUS-TYPE INTEGRATOR FOR NON-AUTONOMOUS SPDEs}

\begin{abstract}
This paper aims to investigate numerical approximation of a general second order non-autonomous semilinear parabolic  
stochastic partial differential equation (SPDE) driven by multiplicative noise. 
Numerical approximations of autonomous SPDEs are thoroughly investigated in the literature, while the   non-autonomous 
case is not yet understood. We discretize the non-autonomous SPDE  driven  by multiplicative noise   by the finite element method  in space and 
the Magnus-type integrator in time.
  We  provide a strong convergence proof of the fully discrete scheme toward the mild solution 
  in the root-mean-square $L^2$  norm.  
 The result reveals how the convergence orders in both space and time depend on the regularity of the noise and the initial data. 
In particular, for multiplicative trace class noise we achieve convergence order 
$\mathcal{O}\left(h^2\left(1+\max(0,\ln\left(t_m/h^2\right)\right)+\Delta t^{1/2}\right)$.
   Numerical simulations  to illustrate our theoretical finding are provided. 
 
 %porous media flow.
\end{abstract}
\begin{keywords}
 Magnus-type integrator, Stochastic partial differential equations,  Multiplicative noise, Strong convergence,  Non-autonomous equations, Finite element method.
\end{keywords}
%\end{frontmatter}
\section{Introduction}
We consider the numerical approximations of  the following semilinear parabolic  non-autonomous  SPDE driven by mutiplicative noise 
\begin{eqnarray}
 \label{model1}
 \left\{\begin{array}{ll}
  dX=[A(t)X+F(t,X)]dt+B(t,X)dW(t),\quad \text{in} \quad \Lambda\times (0,T],\\
  X(0)=X_0,\quad \hspace{4.9cm} \text{in}\quad \Lambda,
 \end{array}
 \right.
 \end{eqnarray}
 in the Hilbert space $L^2(\Lambda)$, where $\Lambda$ is a bounded domain of $\mathbb{R}^d$, $d=1,2,3$ and $T\in(0,\infty)$.
 The family of unbounded linear operators $A(t)$  are not necessarily self-adjoint. Each $A(t)$  is assumed to generate  an analytic semigroup $S_t(s):= e^{A(t)s}$. 
 The nonlinear functions $F $  and $B$   are  respectively the drift and the diffusion parts. Precise assumptions on $A(t)$, $F$ and $B$ to ensure the existence of
 the unique mild solution of \eqref{model1} are given in the next section. 
 The random initial data is denoted by $X_0$. We denote by $(\Omega, \mathcal{F}, \mathbb{P})$ a probability 
 space with a filtration $(\mathcal{F}_t)_{t\in[0,T]}\subset \mathcal{F}$ that fulfills the usual conditions (see \cite[Definition 2.1.11]{Prevot}).
 The noise term $W(t)$ is assumed to be a $Q$-Wiener process defined on a filtered probability space $\left(\Omega, \mathcal{F}, \mathbb{P}, \{\mathcal{F}_t\}_{t\in[0,T]}\right)$, 
 where the covariance operator $Q : H\longrightarrow H$ is assumed to be linear, self adjoint and positive definite.
It is well known \cite{Prevot} that the noise can be represented as 
\begin{eqnarray}
\label{covariance}
W(t,x)=\sum_{i=0}^{\infty}\sqrt{q}_ie_i(x)\beta_i(t),
\end{eqnarray} 
where $(q_i,e_i)_{i\in\mathbb{N}}$ are the eigenvalues and eigenfunctions of the covariance operator $Q$, and $(\beta_i)_{i\in\mathbb{N}}$ 
are independent and identically distributed standard Brownian motions. 
The deterministic counterpart of \eqref{model1} finds  applications in many fields such as quantum fields theory, electromagnetism, 
nuclear physics (see e.g. \cite{Blanes} and references therein). It is worth to mention that models based on SPDEs can offer a more realistic 
representation of the system than models based only on PDEs, due to uncertainty in the input data. In many situations it is very  hard to exhibit explicit solutions of SPDEs.
For instance the following  non-autonomous linear Stratonovich stochastic ordinary differential equation
\begin{eqnarray}
dy=G_0(t)ydt+\sum_{j=1}^dG_j(t)ydW_j(t),\quad y(0)=y_0\in\mathbb{R}^m
\end{eqnarray}
does not have explicit solution (see e.g. \cite{Blanes2,Kloeden}), unless $G_i$ and $G_j$  commute for all $i,j\geq 0$. 
Numerical algorithms  are  therefore  excellent tools to provide good approximations.
Numerical approximations of  \eqref{model1} 
based on  implicit, explicit Euler methods and exponential integrators with $A(t)=A$, where $A$ is  self-adjoint  are thoroughly investigated 
in the literature, see e.g. \cite{Arnulf2,Kruse1,Kovacs1,Xiaojie1,Yan1,Antonio3,Xiaojie2} and the references therein. If we turn our attention
to the case of time independent operator $A(t)=A$, with $A$ not necessary self-adjoint, the list of references become remarkably short, see e.g., \cite{Antonio1,Antjd1}. 
To the best of our knowledge  numerical approximations of  \eqref{model1} with time dependent linear operator $A(t)$ are not yet investigated in the scientific literature,
due to the complexity of the linear operator $A(t)$ and its semigroup $S_t(s):= e^{A(t)s}$.
Our aim in this paper is to fill that gap and propose an explicit numerical scheme to approximate  \eqref{model1}. 
%To this end, we need to discretize the infinite dimensional space $L^2(\Lambda)$ as well as the time interval $[0,T]$.
We use the finite element method for  spatial discretization and Magnus-type integrator for temporal  discretization. 
  Magnus-type integrator is based on a truncation of  Magnus expansion, which was first proposed in \cite{Magnus} 
  to represent the solution of non-autonomous homogeneous differential equation in the exponential form. Magnus expansion was further studied in \cite{Blanes2,Blanes1,Blanes}.
  The first numerical method based on magnus expansion was proposed in \cite{Hochbruck1} for deterministic time-dependent homogeneous Schr\"{o}ndinger equation. 
  The study in \cite{Hochbruck1} was extended  in \cite{Ostermann1} for partial differential equation of the following form
\begin{eqnarray}
u'(t)=A(t)u(t)+b(t),\quad 0<t\leq T,\quad u(0)=u_0.
\end{eqnarray}  
We follow \cite{Ostermann1} and apply the Magnus-type integrator method to the semi-discrete problem \eqref{semi1} and obtain the fully discrete scheme \eqref{scheme3}, 
called stochastic Magnus-type integrators (SMTI). We investigate the strong convergence of the new fully discrete scheme toward the exact solution.
Due to the complexity   of  the  linear operator and the corresponding  semi discrete linear operator after  space discretisation,
%our Magnus-type integrator scheme \eqref{scheme3} (or \eqref{scheme4}) from the mild solution \eqref{mild1},
novel technical estimates are provided to achieve convergence orders comparable of that of  autonomous SPDEs \cite{Antonio1,Kruse1,Antjd1}.
The result indicates how the convergence orders in both space and time depend on the regularity of the initial data and the noise. In particular 
for multiplicative trace class noise, we achieve optimal convergence orders of $\mathcal{O}\left(h^{\beta}+\Delta t^{\min(\beta,1)/2}\right)$, where $\beta$ 
is the regularity's parameter, defined in  \assref{assumption1}. 

 %These optimal convergence orders in both space and time were also achieved in the case of autonomous SPDEs, see e.g. \cite{Kruse1,Antjd1}.
  The rest of this paper is organised as follows. Section \ref{wellposed} provides the general setting, the fully discrete scheme and the main result.
 In  Section \ref{convergenceproof} we provide some preparatory results and we present the proof of the main result. Section \ref{experiment} 
provides some numerical experiments to confirm our theoretical result.

\section{Mathematical setting, numerical scheme and main result}
\label{wellposed}
\subsection{Notations and main assumptions }
\label{notation}
Let  $(H,\langle.,.\rangle_H,\Vert .\Vert)$ be a separable Hilbert space.   For a Banach space $U$, we denote by $L^2(\Omega, U)$ the Banach space of all equivalence classes of square-integrable $U$-valued random variables. Let $L(U,H)$ be 
 the space of bounded linear mappings from $U$ to $H$ endowed with the usual  operator norm $\Vert .\Vert_{L(U,H)}$. 
 By  $\mathcal{L}_2(U,H):=HS(U,H)$,
 we  denote the space of Hilbert-Schmidt operators from $U$ to $H$ equipped with the norm 
 \begin{eqnarray}
 \Vert l\Vert^2_{\mathcal{L}_2(U,H)}:=\sum\limits_{i=1}^{\infty}\Vert l\psi_i\Vert^2, \quad  l\in \mathcal{L}_2(U,H),
 \end{eqnarray}
  where $(\psi_i)_{i=1}^{\infty}$ is an orthonormal basis of $U$. Note that this definition is independent of the orthonormal basis of $U$.  
For simplicity, we use the notations $L(U,U)=:L(U)$. and $\mathcal{L}_2(U,U)=:\mathcal{L}_2(U)$. 
 For all $l\in L(U,H)$ and $l_1\in\mathcal{L}_2(U)$ we have $ll_1\in\mathcal{L}_2(U,H)$ and 
\begin{eqnarray}
\label{chow1}
\Vert ll_1\Vert_{\mathcal{L}_2(U,H)}\leq \Vert l\Vert_{L(U,H)}\Vert l_1\Vert_{\mathcal{L}_2(U)}.
\end{eqnarray}
%see e.g. \cite{Chow}. 
%   The covariance operator  $Q : H\longrightarrow H$ is assumed to be positive and self-adjoint. Throughout this paper $W(t)$ is a $Q$-wiener process. 
 The space of Hilbert-Schmidt operators from  $Q^{1/2}(H)$ to $H$ is denoted by $L^0_2:=\mathcal{L}_2(Q^{1/2}(H),H)=HS(Q^{1/2}(H),H)$. As usual, $L^0_2$ is equipped with the norm
 \begin{eqnarray}
 \Vert l\Vert_{L^0_2} :=\Vert lQ^{1/2}\Vert_{HS}=\left(\sum\limits_{i=1}^{\infty}\Vert lQ^{1/2}e_i\Vert^2\right)^{1/2}, \quad  l\in L^0_2,
 \end{eqnarray}
where $(e_i)_{i=1}^{\infty}$ is an orthonormal basis  of $H$.
This definition is independent of the orthonormal basis of $H$. For an $L^0_2$- predictable stochastic process $\phi :[0,T]\times \Lambda\longrightarrow L^0_2$ such that
\begin{eqnarray}
\int_0^t\mathbb{E}\Vert \phi Q^{1/2}\Vert^2_{HS}ds<\infty,\quad t\in[0,T],
\end{eqnarray}
the following relation called It\^{o}'s isometry property holds
\begin{eqnarray}
\label{ito}
\mathbb{E}\left\Vert\int_0^t\phi dW(s)\right\Vert^2=\int_0^t\mathbb{E}\Vert \phi\Vert^2_{L^0_2}ds=\int_0^t\mathbb{E}\Vert\phi Q^{1/2}\Vert^2_{HS}ds,\quad t\in[0,T],
\end{eqnarray}
see e.g. \cite[Step 2 in Section 2.3.2]{Prato} or \cite[Proposition 2.3.5]{Prevot}.

In the rest of this paper, we consider $H=L^2(\Lambda)$. To guarantee the existence of a unique mild solution of \eqref{model1} and for the purpose of the 
convergence analysis, we make the following  assumptions.
\begin{assumption}
 \label{assumption1}
The initial data $X_0 : \Omega\longrightarrow H$ is assumed to be measurable and satisfies $X_0\in L^2\left(\Omega , \mathcal{D}\left(\left(-A(0)\right)^{\beta/2}\right)\right)$, $0\leq \beta\leq 2$.
 \end{assumption}
\begin{assumption}
\label{assumption2}
\begin{itemize}
\item[(i)]
As in \cite{Ostermann1,Gonza1,Ostermann2}, we assume that $\mathcal{D}\left(A(t)\right)=D$, $0\leq t\leq T$ and the family of linear operators $A(t) : D\subset H\longrightarrow H$ to be uniformly sectorial on $0\leq t\leq T$, i.e. there exist constants $c>0$ and $\theta\in\left(\frac{1}{2}\pi,\pi\right)$ such that
\begin{eqnarray}
\left\Vert \left(\lambda\mathbf{I}-A(t)\right)^{-1}\right\Vert_{L(L^2(\Lambda))}\leq \frac{c}{\vert \lambda\vert},\quad \lambda\in S_{\theta},
\end{eqnarray}
where $S_{\theta}:=\left\{\lambda\in\mathbb{C} : \lambda=\rho e^{i\phi}, \rho>0, 0\leq \vert \phi\vert\leq \theta\right\}$. 
As in \cite{Ostermann2}, by a standard scaling argument, we assume $-A(t)$ to be invertible with bounded inverse.
\item[(ii)]
 Similarly to \cite{Gonza1,Ostermann2,Ostermann1,Prato}, we require the following Lipschitz conditions:  there exists a positive constant $K_1$ such that
\begin{eqnarray}
\label{conditionB}
\left\Vert \left(A(t)-A(s)\right)(-A(0))^{-1}\right\Vert_{L(H)}&\leq& K_1\vert t-s\vert,\quad s,t\in[0, T],\\
\left\Vert  (-A(0))^{-1}\left(A(t)-A(s)\right)\right\Vert_{L(D,H)}&\leq& K_1\vert t-s\vert,\quad s,t\in[0, T].
\end{eqnarray}
\item[(iii)]
 Since we are dealing with non smooth data, we follow \cite{Praha} and  assume that 
\begin{eqnarray}
\label{domaine}
\mathcal{D}\left(\left(-A(t)\right)^{\alpha}\right)=\mathcal{D}\left(\left(-A(0)\right)^{\alpha}\right),\quad 0\leq t\leq T,\quad 0\leq \alpha\leq 1
\end{eqnarray}
and there exists a positive constant $K_2$ such that for all $u\in \mathcal{D}((-A(0))^{\alpha})$ the following estimate holds uniformly for $t\in[0,T]$
\begin{eqnarray}
\label{equivnorme1}
K_2^{-1}\left\Vert \left(-A(0)\right)^{\alpha}u\right\Vert\leq \left\Vert (-A(t))^{\alpha}u\right\Vert\leq K_2\left\Vert (-A(0))^{\alpha}u\right\Vert.
\end{eqnarray} 
\end{itemize}
\end{assumption}
\begin{remark}
\label{remark1}
As a consequence of \assref{assumption2} (i) and (iii), for all $\alpha\geq 0$ and $\delta\in[0,1]$, there exists a constant $C_1>0$ 
such that the following estimates hold uniformly for all $t\in[0,T]$
\begin{eqnarray}
\label{smooth}
\left\Vert (-A(t))^{\alpha}e^{sA(t)}\right\Vert_{L(H)}&\leq& C_1s^{-\alpha},\quad \quad s>0,\\
\label{smootha}
 \left\Vert(-A(t))^{-\delta}\left(\mathbf{I}-e^{sA(t)}\right)\right\Vert_{L(H)}&\leq& C_1s^{\delta}, \quad\quad\quad s\geq 0,
\end{eqnarray}
see e.g. \cite[(2.1)]{Ostermann2}.
\end{remark}
\begin{proposition}\cite[Theorem 6.1, Chapter 5]{Pazy}
\label{proposition1}
Let $\Delta(T):=\{(t,s) : 0\leq s\leq t\leq T\}$.
Under \assref{assumption2} there exists a unique evolution system \cite[Definition 5.3, Chapter 5]{Pazy} $U : \Delta(T)\longrightarrow L(H)$ such that
\begin{itemize}
\item[(i)] There exists a positive constant $K_0$ such that
\begin{eqnarray}
\Vert U(t,s)\Vert_{L(H)}\leq K_0,\quad 0\leq s\leq t\leq T.
\end{eqnarray}
\item[(ii)] $U(.,s)\in C^1(]s,T] ; L(H))$, $0\leq s\leq T$,
\begin{eqnarray}
\frac{\partial U}{\partial t}(t,s)=-A(t)U(t,s), \quad0\leq s<t\leq T,\\
\Vert A(t)U(t,s)\Vert_{L(H)}\leq \frac{K_0}{t-s},\quad 0\leq s<t\leq T.
\end{eqnarray}
\item[(iii)] $U(t,.)x\in C^1([0,t[ ; H)$, $0<t\leq T$, $x\in\mathcal{D}(A(0))$ and 
\begin{eqnarray}
\frac{\partial U}{\partial s}(t,s)=-U(t,s)A(s)x,\quad 0\leq s\leq t\leq T,\\
 \Vert A(t)U(t,s)A(s)^{-1}\Vert_{L(H)}\leq K_0, \quad 0\leq s\leq t\leq T.
\end{eqnarray}
\end{itemize}
\end{proposition}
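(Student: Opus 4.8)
The statement is the classical existence theorem for a parabolic evolution system under Sobolevskii--Tanabe type hypotheses, and the plan is to reproduce Pazy's parametrix (Levi) construction, checking that \assref{assumption2} supplies exactly the estimates it needs. By the uniform sectoriality in \assref{assumption2}(i) each frozen operator $A(t)$ generates an analytic semigroup $e^{sA(t)}$ satisfying the uniform smoothing bounds of Remark~\ref{remark1}. I would take the frozen propagator $e^{(t-s)A(s)}$ as a first approximation to $U(t,s)$ and seek the true propagator in the perturbed form
\begin{equation}
U(t,s)=e^{(t-s)A(s)}+\int_s^t e^{(t-\tau)A(\tau)}R(\tau,s)\,d\tau,
\end{equation}
where the kernel $R$ is chosen to cancel the defect created by freezing the coefficient at the initial time $s$.

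Inserting the ansatz into the evolution equation and matching terms, the requirement $\partial_t U=A(t)U$ (in the generator convention $e^{sA(t)}$ of Remark~\ref{remark1}; the minus sign in the statement corresponds to the positive-operator convention of \cite{Pazy}) reduces to a Volterra equation $R=R_1+R_1*R$ for $R$, with convolution $(f*g)(t,s):=\int_s^t f(t,\tau)g(\tau,s)\,d\tau$ and forcing term
\begin{equation}
R_1(t,s):=\left(A(t)-A(s)\right)e^{(t-s)A(s)}.
\end{equation}
The decisive estimate is that $R_1$ is at worst weakly singular on the diagonal $\{t=s\}$. Factoring
\begin{equation}
R_1(t,s)=\left[\left(A(t)-A(s)\right)(-A(0))^{-1}\right]\left[(-A(0))(-A(s))^{-1}\right]\left[(-A(s))e^{(t-s)A(s)}\right]
\end{equation}
and combining the Lipschitz bound \eqref{conditionB}, the norm equivalence \eqref{equivnorme1} (which makes $(-A(0))(-A(s))^{-1}$ bounded by $K_2$), and the smoothing estimate \eqref{smooth} with $\alpha=1$, I obtain $\|R_1(t,s)\|_{L(H)}\le C$ uniformly on $\Delta(T)$; in the merely Lipschitz regime the $(t-s)^{-1}$ blow-up of the semigroup factor is completely absorbed. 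I would then define the iterated kernels $R_{m+1}:=R_1*R_m$ and set $R:=\sum_{m\ge1}R_m$; each successive convolution gains a power of $(t-s)$ with a factorial denominator, so the series converges in $L(H)$, locally uniformly on $\Delta(T)$, and solves the Volterra equation.

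With $R$ in hand I would \emph{define} $U$ by the displayed formula and verify (i)--(iii) in turn. Property (i), the uniform bound $\|U(t,s)\|_{L(H)}\le K_0$, follows from the uniform semigroup bound and the integrability of $R$. For (ii) I would differentiate under the integral, using the cancellation encoded in the Volterra equation to recover the differentiation formula for $\partial_t U$ together with $\|A(t)U(t,s)\|_{L(H)}\le K_0/(t-s)$; here the singular factor $A(t)e^{(t-\tau)A(\tau)}\sim(t-\tau)^{-1}$ has to be integrated against $R(\tau,s)$ up to $\tau=t$, which is the most delicate point of the argument and is handled by splitting off the principal part and estimating the remainder with Remark~\ref{remark1}. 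Property (iii), namely backward differentiability and $\|A(t)U(t,s)A(s)^{-1}\|_{L(H)}\le K_0$, uses the second Lipschitz condition in \eqref{conditionB} (the $L(D,H)$ version) in the same way. Uniqueness is standard: for two evolution systems $U,\tilde U$ with these properties the map $\tau\mapsto U(t,\tau)\tilde U(\tau,s)$ is differentiable with vanishing derivative on $(s,t)$, forcing $U=\tilde U$.

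The hard part is the regularity near the diagonal: the whole scheme works only because the Lipschitz continuity of $t\mapsto A(t)$ in \assref{assumption2}(ii) upgrades the a priori non-integrable $(t-s)^{-1}$ singularity of the frozen generator into a bounded (hence integrable) kernel $R_1$, and because the norm equivalence in \assref{assumption2}(iii) lets one freely interchange the operators $A(t)$, $A(s)$ and $A(0)$ up to uniform constants. Controlling the application of the unbounded $A(t)$ to the corrective integral in property (ii) is where the uniform smoothing estimates of Remark~\ref{remark1} are used most carefully. Since this is precisely \cite[Theorem 6.1, Chapter 5]{Pazy}, the construction itself can be quoted; the content specific to the present non-self-adjoint setting is that \assref{assumption2} verifies its hypotheses.
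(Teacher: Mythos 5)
Your proposal is correct and follows essentially the same route as the paper: the paper proves this proposition purely by invoking \cite[Theorem 6.1, Chapter 5]{Pazy} after noting that \assref{assumption2} supplies its hypotheses, and the parametrix construction you reproduce (the ansatz $U(t,s)=e^{(t-s)A(s)}+\int_s^t e^{(t-\tau)A(\tau)}R(\tau,s)\,d\tau$ with the Volterra kernel $R=\sum_m R_m$, $R_1(t,s)=(A(t)-A(s))e^{(t-s)A(s)}$ made bounded by the Lipschitz condition \eqref{conditionB}) is exactly Pazy's proof, which the paper itself mirrors for the semi-discrete operators $A_h(t)$ in Remark~\ref{evolutionremark} and Lemma~\ref{pazylemma}. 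Your observation about the sign convention (the minus sign in the statement being inherited from Pazy's positive-operator convention) is also accurate.
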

 We equip $V_{\alpha}(t) : = \mathcal{D}\left(\left(-A(t)\right)^{\alpha/2}\right)$, $\alpha\in \mathbb{R}$ with the norm $\Vert u\Vert_{\alpha,t} := \Vert (-A(t))^{\alpha/2}u\Vert$. Due to \eqref{domaine}-\eqref{equivnorme1} and for the seek of  ease notations, we simply write $V_{\alpha}$ and $\Vert .\Vert_{\alpha}$.

We follow \cite{Praha} and  assume the nonlinear operator $F$ to satisfy the following Lipschitz condition. 
\begin{assumption}
\label{assumption3}
The nonlinear operator $F : [0,T]\times H\longrightarrow H$ is assumed to be $\beta/2$-H\"{o}lder continuous with respect to the first variable and Lipschitz continuous with respect to the second variable, i.e. there exists a positive constant $K_3$ such that 
\begin{eqnarray}
\Vert F(s,0)\Vert \leq K_3, \quad \Vert F(t, u)-F(s,v)\Vert \leq K_3\left(\vert t-s\vert^{\beta/2}+\Vert u-v\Vert\right),  
\end{eqnarray}
for all $s,t\in[0,T]$ and $u,v\in H$.
\end{assumption}
 \begin{assumption}
 \label{assumption4}
 We assume the diffusion function $B : [0,T]\times H\longrightarrow L^2_0$ to  be $\beta/2$-H\"{o}lder continuous with respect to the first variable and Lipschitz continuous with respect to the second variable, i.e. there exists a positive constant $K_4$ such that
 \begin{eqnarray}
 \Vert B(s,0)\Vert_{L^0_2}\leq K_4,\quad \Vert B(t,u)-B(s,v)\Vert_{L^0_2}\leq K_4\left(\vert t-s\vert^{\beta/2}+\Vert u-v\Vert\right),
 \end{eqnarray}
 for all $s,t\in[0,T]$ and $u,v\in H$.
 \end{assumption}

 The following theorem ensures the existence of a unique mild solution of \eqref{model1}.
\begin{theorem}
\label{theorem1}\cite[Theorem 1.3]{Praha} Let Assumptions \ref{assumption1}, \ref{assumption2} (i)-(ii),  \ref{assumption3} and  \ref{assumption4} 
be fulfilled. Then the non-autonomous SPDE \eqref{model1} has a unique mild solution $X(t)\in L^2\left(\Omega, \mathcal{D}\left((-A(0))^{\beta/2}\right)\right)$, which takes the following form
\begin{eqnarray}
\label{mild1}
X(t)&=&U(t,0)X_0+\int_0^tU(t,s)F(s,X(s))ds+\int_0^tU(t,s)B(s,X(s))dW(s),
\end{eqnarray} 
where $U(t,s)$ is the evolution system of \propref{proposition1}. Moreover, there exists a positive constant $K_5$ such that 
\begin{eqnarray}
\label{borne1}
\sup_{0\leq t\leq T}\Vert X(t)\Vert_{L^2\left(\Omega, \mathcal{D}\left(\left(-A(0)\right)^{\beta/2}\right)\right)}\leq K_5\left(1+\Vert X_0\Vert_{L^2\left(\Omega,\mathcal{D}\left(\left(-A(0)\right)^{\beta/2}\right)\right)}\right).
\end{eqnarray}
\end{theorem}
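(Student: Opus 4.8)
The statement combines a well-posedness claim with an a~priori regularity bound, and I would treat the two parts separately, relying throughout on the evolution system $U(t,s)$ furnished by \propref{proposition1}.

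For existence and uniqueness, the plan is a Banach fixed-point argument. I would work in the space $\mathcal{H}_T$ of predictable processes $Y:[0,T]\to L^2(\Omega,H)$ equipped with the weighted norm $\Vert Y\Vert_{\lambda}:=\sup_{0\leq t\leq T}e^{-\lambda t}\Vert Y(t)\Vert_{L^2(\Omega,H)}$, and define the map $\Phi$ by the right-hand side of \eqref{mild1}, namely $(\Phi Y)(t)=U(t,0)X_0+\int_0^tU(t,s)F(s,Y(s))\,ds+\int_0^tU(t,s)B(s,Y(s))\,dW(s)$. That $\Phi$ maps $\mathcal{H}_T$ into itself follows from the uniform bound $\Vert U(t,s)\Vert_{L(H)}\leq K_0$ of \propref{proposition1}(i), from the linear growth of $F$ and $B$ obtained by combining their Lipschitz property with the bounds $\Vert F(s,0)\Vert\leq K_3$ and $\Vert B(s,0)\Vert_{L^0_2}\leq K_4$ of \assref{assumption3} and \assref{assumption4}, and from It\^o's isometry \eqref{ito} applied to the stochastic term. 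The contraction property is obtained in the same way: invoking only the Lipschitz constants of $F$ and $B$ one estimates $\Vert \Phi Y_1-\Phi Y_2\Vert_{\lambda}\leq C(\lambda)\Vert Y_1-Y_2\Vert_{\lambda}$ with $C(\lambda)<1$ for $\lambda$ large enough, so Banach's fixed-point theorem yields a unique fixed point, which is the sought mild solution.

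For the regularity bound \eqref{borne1}, I would apply $(-A(0))^{\beta/2}$ to \eqref{mild1} and estimate the three resulting terms in $L^2(\Omega,H)$. The key technical ingredient is the fractional smoothing of the evolution system, $\Vert(-A(t))^{\alpha}U(t,s)\Vert_{L(H)}\leq C(t-s)^{-\alpha}$ for $0\leq\alpha\leq 1$, which extends the case $\alpha=1$ recorded in \propref{proposition1}(ii) by interpolation, together with the norm equivalence \eqref{equivnorme1} that allows me to exchange $(-A(0))^{\beta/2}$ and $(-A(t))^{\beta/2}$. The initial term is then controlled by a constant times $\Vert X_0\Vert_{\beta}$; the drift term by $\int_0^t(t-s)^{-\beta/2}\,ds$, which is finite since $\beta/2<1$; and the diffusion term, after It\^o's isometry, by $\int_0^t(t-s)^{-\beta}\,\mathbb{E}\Vert B(s,X(s))\Vert_{L^0_2}^2\,ds$. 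Inserting the linear growth of $F$ and $B$ into these bounds and closing the estimate with Gronwall's lemma delivers the uniform bound \eqref{borne1}.

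I expect the diffusion term to be the main obstacle, for two reasons. First, establishing the fractional smoothing estimate for the non-autonomous family $U(t,s)$, as opposed to a single analytic semigroup, is the delicate analytic point and must be extracted from the construction of the evolution system rather than from \eqref{smooth} directly. Second, the kernel $(t-s)^{-\beta}$ is only integrable for $\beta<1$, so the borderline regime $\beta\in[1,2]$ requires extra structure, for instance additional smoothing coming from $Q^{1/2}$ or from $B$ taking values in a smoother Hilbert--Schmidt space, in order to keep the stochastic convolution inside $\mathcal{D}\left((-A(0))^{\beta/2}\right)$.
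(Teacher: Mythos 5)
First, a structural remark: the paper does not prove this theorem at all --- it is imported wholesale from \cite[Theorem 1.3]{Praha}, so there is no internal proof to compare yours against; your attempt can only be judged on its own terms.

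Your first half (Banach fixed point for existence and uniqueness in the weighted sup-norm, using the uniform bound on $U(t,s)$ from \propref{proposition1}, the Lipschitz and linear-growth bounds of Assumptions \ref{assumption3}--\ref{assumption4}, and the It\^{o} isometry \eqref{ito}) is the standard argument and is sound in outline. The genuine gap is in the second half. The theorem asserts \eqref{borne1} for the whole range $0\leq\beta\leq 2$ of \assref{assumption1}, and it does so \emph{without} \assref{assumption5}. Your estimate of the stochastic convolution yields the kernel $(t-s)^{-\beta}$ after the It\^{o} isometry, which is integrable only for $\beta<1$; moreover, at $\beta=2$ even your drift estimate produces the non-integrable kernel $(t-s)^{-1}$ (your parenthetical ``finite since $\beta/2<1$'' silently excludes the admissible endpoint $\beta=2$). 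You acknowledge this difficulty and suggest that ``extra structure,'' such as $B$ taking values in a smoother Hilbert--Schmidt space, would repair it --- but that is precisely \assref{assumption5}, which this theorem does not assume, so your proof does not close for $\beta\in[1,2]$. This is not a pedantic point: the paper's own \lemref{regularitylemma}, which runs exactly your type of smoothing-plus-Gronwall argument for the semi-discrete problem, is correspondingly restricted to $0\leq\beta<1$ and has to invoke \assref{assumption5} to reach even $\beta=1$. Covering the full range claimed in the statement requires a different mechanism for the stochastic convolution (this is what the cited result of \cite{Praha} supplies, e.g.\ via factorization/maximal-regularity techniques), not the naive application of $(-A(0))^{\beta/2}$ under the It\^{o} isometry.
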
 
To achieve optimal convergence order in space for multiplicative noise when $\beta\in[1,2]$,
we require  the following further assumption, also used in \cite{Kruse1,Arnulf1,Xiaojie2,Antonio1,Antjd1}.
\begin{assumption}
\label{assumption5}
We assume that there exists a positive constant $c_1>0$,  such 
that $B\left(s,\mathcal{D}((-A(0))^{\frac{\beta-1}{2}})\right)\subset HS\left(Q^{1/2}(H),\mathcal{D}\left(\left(-A(0)\right)^{\frac{\beta-1}{2}}\right)\right)$ 
{\small
\begin{eqnarray}
\left\Vert (-A(0))^{\frac{\beta-1}{2}}B(s,v)\right\Vert_{L^0_2}\leq c_1\left(1+\Vert v\Vert_{\beta-1}\right)\quad,  v\in\mathcal{D}\left(\left(-A(0)\right)^{\frac{\beta-1}{2}}\right),\quad s\in[0,T],
\end{eqnarray}
}
 where $\beta$ comes from \assref{assumption1}.
\end{assumption}

\subsection{Fully discrete scheme and main result}
\label{spacediscretization}
For the seek of simplicity, we assume the family of linear operators $A(t)$\footnote{ Indeed the operators $A(t)$ are identified to their $L^2$ realizations
given in \eqref{family} (see \cite{Suzuki}).} to be of second order and has the following form
\begin{eqnarray}
\label{family}
A(t)u=\sum_{i,j=1}^d\frac{\partial}{\partial x_i}\left(q_{ij}(x,t)\frac{\partial u}{\partial x_j}\right)-\sum_{j=1}^dq_j(x,t)\frac{\partial u}{\partial x_j}.
\end{eqnarray}
We require the coefficients $q_{i,j}$ and $q_j$ to be smooth functions of the variable $x\in\overline{\Lambda}$ and H\"{o}lder-continuous with respect to $t\in[0,T]$. We further assume that there exists a positive constant $c$ such that the following  ellipticity condition holds
\begin{eqnarray}
\label{ellip}
\sum_{i,j=1}^dq_{ij}(x,t)\xi_i\xi_j\geq c\vert \xi\vert^2, \quad (x,t)\in\overline{\Lambda}\times [0,T].
\end{eqnarray}

In the abstract form \eqref{model1}, the nonlinear functions $F:H\longrightarrow H$ and $B:H\longrightarrow HS(Q^{1/2}(H), H)$ are defined by
\begin{eqnarray}
\label{nemistekii1}
(F(v))(x)=f(x, v(x)),\quad (B(v)u)(x)=b(x, v(x)).u(x),
\end{eqnarray}
 for all $x\in \Lambda$,  $v\in H$ and $u\in Q^{1/2}(H)$, where $f:\Lambda\times \mathbb{R}\longrightarrow \mathbb{R}$ and $b:\Lambda\times\mathbb{R}\longrightarrow\mathbb{R}$
 are continuously differentiable functions with globally bounded derivatives. 
 
 Under the above assumptions on $q_{ij}$ and $q_j$, it is  well known  that  the family of linear operators defined by \eqref{family} fulfills  \assref{assumption2} (i)-(ii)  
 with $D=H^2(\Lambda)\cap H^1_0(\Lambda)$, see \cite[Section 7.6]{Pazy} or \cite[Section 5.2]{Tanabe}. The above assumptions on $q_{ij}$ and $q_j$ also imply that \assref{assumption2} (iii) is fulfilled, see e.g. \cite[Example 6.1]{Praha} or \cite{Amann,Seely}.
 
 As in \cite{Suzuki,Antonio1}, we introduce two spaces $\mathbb{H}$ and $V$, such that $\mathbb{H}\subset V$,  depending on the boundary conditions for the domain of the operator $-A(t)$ and the corresponding bilinear form. For  Dirichlet  boundary conditions we take 
\begin{eqnarray}
V=\mathbb{H}=H^1_0(\Lambda)=\{v\in H^1(\Lambda) : v=0\quad \text{on}\quad \partial \Lambda\}.
\end{eqnarray}
For Robin  boundary condition and  Neumann  boundary condition, which is a special case of Robin boundary condition ($\alpha_0=0$), we take $V=H^1(\Lambda)$ and
\begin{eqnarray}
\mathbb{H}=\{v\in H^2(\Lambda) : \partial v/\partial v_A+\alpha_0v=0,\quad \text{on}\quad \partial \Lambda\}, \quad \alpha_0\in\mathbb{R}.
\end{eqnarray}
Using  Green's formula and the boundary conditions, we obtain the corresponding bilinear form associated to $-A(t)$  
\begin{eqnarray*}
a(t)(u,v)=\int_{\Lambda}\left(\sum_{i,j=1}^dq_{ij}(x,t)\dfrac{\partial u}{\partial x_i}\dfrac{\partial v}{\partial x_j}+\sum_{i=1}^dq_i(x,t)\dfrac{\partial u}{\partial x_i}v\right)dx, \quad u,v\in V,
\end{eqnarray*}
for Dirichlet boundary conditions and  
\begin{eqnarray*}
a(t)(u,v)=\int_{\Lambda}\left(\sum_{i,j=1}^dq_{ij}(x,t)\dfrac{\partial u}{\partial x_i}\dfrac{\partial v}{\partial x_j}+\sum_{i=1}^dq_i(x,t)\dfrac{\partial u}{\partial x_i}v\right)dx+\int_{\partial\Lambda}\alpha_0uvdx.
\end{eqnarray*}
for Robin  and Neumann boundary conditions. 
Using  G\aa rding's inequality, it holds that there exist two constants $\lambda_0$ and $c_0$ such that
\begin{eqnarray}
a(t)(v,v)\geq \lambda_0\Vert v \Vert^2_{1}-c_0\Vert v\Vert^2, \quad \forall v\in V,\quad t\in[0,T].
\end{eqnarray}
By adding and subtracting $c_{0}u $ on the right hand side of (\ref{model1}), we obtain a new family of linear operators that we still denote by  $A(t)$.
Therefore the  new corresponding   bilinear form associated to $-A(t)$ still denoted by $a(t)$ satisfies the following coercivity property
\begin{eqnarray}
\label{ellip2}
a(t)(v,v)\geq \; \lambda_0\Vert v\Vert_{1}^{2},\;\;\;\;\;\forall v \in V,\quad t\in[0,T].
\end{eqnarray}
Note that the expression of the nonlinear term $F$ has changed as we have included the term $-c_{0}u$
in a new nonlinear term that we still denote by $F$.

The coercivity property (\ref{ellip2}) implies that $A(t)$ is sectorial on $L^2(\Lambda)$, see e.g. \cite{Stig2}. Therefore   $A(t)$ generates an analytic semigroup   $S_t(s)=e^{s A(t)}$  on $L^{2}(\Lambda)$  such that \cite{Henry}
\begin{eqnarray}
S_t(s)= e^{s A(t)}=\dfrac{1}{2 \pi i}\int_{\mathcal{C}} e^{ s\lambda}(\lambda I - A(t))^{-1}d \lambda,\;\;\;\;\;\;\;
\;s>0,
\end{eqnarray}
where $\mathcal{C}$  denotes a path that surrounds the spectrum of $A(t)$.
The coercivity  property \eqref{ellip2} also implies that $-A(t)$ is a positive operator and its fractional powers are well defined and 
  for any $\alpha>0$ we have
\begin{equation}
\label{fractional}
 \left\{\begin{array}{rcl}
         (-A(t))^{-\alpha} & =& \frac{1}{\Gamma(\alpha)}\displaystyle\int_0^\infty  s^{\alpha-1}{\rm e}^{sA(t)}ds,\\
         (-A(t))^{\alpha} & = & ((-A(t))^{-\alpha})^{-1},
        \end{array}\right.
\end{equation}
where $\Gamma(\alpha)$ is the Gamma function (see \cite{Henry}).  
 The domain  of $(-A(t))^{\alpha/2}$  are  characterized in \cite{Suzuki,Stig1,Stig2}  for $1\leq \alpha\leq 2$  with equivalence of norms as follows.
\begin{eqnarray}
\mathcal{D}((-A(t))^{\alpha/2})=H^1_0(\Lambda)\cap H^{\alpha}(\Lambda)\hspace{1cm} 
\text{(for Dirichlet boundary condition)}\nonumber\\
\mathcal{D}(-A(t))=\mathbb{H},\quad \mathcal{D}((-A(t))^{1/2})=H^1(\Lambda)\hspace{0.5cm} \text{(for Robin boundary condition)}\nonumber\\
\Vert v\Vert_{H^{\alpha}(\Lambda)}\equiv \Vert ((-A(t))^{\alpha/2}v\Vert:=\Vert v\Vert_{\alpha},\quad \forall v\in \mathcal{D}((-A(t))^{\alpha/2}).\nonumber
\end{eqnarray}
The characterization of $\mathcal{D}((-A(t))^{\alpha/2})$ for $0\leq \alpha<1$ can be found in  \cite[Theorem 2.1 \& Theorem 2.2]{Nambu}.

 Let us now  turn our attention to the space discretization of the problem \eqref{model1}.
We start by splitting the domain $\Lambda$ in finite triangles. Let $\mathcal{T}_h$ be the triangulation with maximal 
length $h$ satisfying the usual regularity assumptions, and $V_h\subset V$ be the space of continuous functions that are piecewise 
linear over the triangulation $\mathcal{T}_h$. We consider the projection $P_h$ from $H=L^2(\Lambda)$ to $V_h$ defined for every $u\in H$ by 
\begin{eqnarray}
\label{proj1}
\langle P_hu, \chi\rangle_H=\langle u,\chi\rangle_H, \quad \phi, \chi \in V_h.
\end{eqnarray}
 For all $t\in[0,T]$, the discrete operator $A_h(t) :V_h\longrightarrow V_h$ is defined by 
 \begin{eqnarray}
 \label{discreteoper}
 \langle A_h(t)\phi,\chi\rangle_H=\langle A(t)\phi,\chi\rangle_H=-a(t)(\phi,\chi), \quad \phi,\chi\in V_h.
 \end{eqnarray}
 The coercivity property \eqref{ellip2}  implies that there exist constants $C_2>0$ and $\theta\in(\frac{1}{2}\pi,\pi)$ such that (see e.g. \cite[(2.9)]{Stig2} or \cite{Suzuki,Henry})
 \begin{eqnarray}
 \label{sectorial1}
 \Vert (\lambda\mathbf{I}-A_h(t))^{-1}\Vert_{L(H)}\leq \frac{C_2}{\vert \lambda\vert},\quad \lambda \in S_{\theta}
 \end{eqnarray}
 holds uniformly for $h>0$ and $t\in[0,T]$. The coercivity condition \eqref{ellip2} implies that for any $t\in[0,T]$, $A_h(t)$ generates an analytic semigroup $S^h_t(s):=e^{sA_h(t)}$, $s\in[0,T]$. The coercivity property \eqref{ellip2} also implies that the smooth properties \eqref{smooth}  and \eqref{smootha} hold for $A_h$ uniformly for $h>0$ and $t\in[0,T]$, i.e. for all $\alpha\geq 0$ and $\delta\in[0,1]$, there exists a positive constant $C_3$ such that the following estimates hold uniformly for $h>0$ and $t\in[0,T]$, see e.g. \cite{Suzuki,Henry}
 \begin{eqnarray}
 \label{smooth2}
 \left\Vert(-A_h(t))^{\alpha}e^{sA_h(t)}\right\Vert_{L(H)}&\leq& C_3s^{-\alpha}, \quad s>0, \\
 \label{smooth1}
  \left\Vert (-A_h(t))^{-\delta}\left(\mathbf{I}-e^{sA_h(t)}\right)\right\Vert_{L(H)}&\leq& C_3s^{\delta}, \quad s\geq 0.
 \end{eqnarray}
 The semi-discrete version of  \eqref{model1} consists of finding $X^h(t)\in V_h$, $t\in[0,T]$ such that $X^h(0):=P_hX_0$ and 
 \begin{eqnarray}
 \label{semi1}
 dX^h(t)=[A_h(t)X^h(t)+P_hF(t,X^h(t))]dt+P_hB(t,X^h(t))dW(t),
 \end{eqnarray}
 for $t\in(0,T]$. 
Let us consider the following linear system of non-autonomous ordinary differential equations (ODEs)
\begin{eqnarray}
\label{diff1}
y'(t)=A(t)y(t), \quad y(0) \quad \text{given}.
\end{eqnarray}

It was shown by Magnus  \cite{Magnus} that the solution of \eqref{diff1} can be represented in the following exponential form
\begin{eqnarray}
\label{expo1}
y(t)=e^{\Theta(t)}y(0),\quad t\geq 0,
\end{eqnarray}
where $\Theta(t)$  called Magnus expansion is given by the following series \cite[(3.28)]{Magnus}
\begin{eqnarray}
\label{expan}
\Theta(t)&=&\int_0^tA(\tau)d\tau+\frac{1}{2}\int_0^t\left[A(\tau),\int_0^{\tau}A(\sigma)d\sigma\right]d\tau\nonumber\\
&+&\frac{1}{4}\int_0^t\left[\int_0^{\tau}\left[\int_0^{\sigma}A(\mu)d\mu,A(\sigma)
\right]d\sigma, A(\tau)\right]d\tau\nonumber\\
&+&\frac{1}{12}\int_0^t\left[\int_0^{\tau}A(\sigma)d\sigma, \left[\int_0^{\tau}A(\mu)d\mu, A(\tau)\right]\right]d\tau+\cdots.
\end{eqnarray}
Here the Lie-product $[u,v]$ of $u$ and $v$ is given by $[u,v]=uv-vu$.
For deterministic problems, numerical methods based on this expansion received some attentions since one decade, see e.g. \cite{Blanes,Ostermann1,Hochbruck1,Kaas,Lu}. 
For the time-dependent Schr\"{o}dinger equation \cite{Ostermann1}, the Magnus expansion \eqref{expan} was truncated after the first term and the integral 
was approximated by the mid-point rule. This mid-point rule approximation of $\Theta(t)$ was also used in \cite{Hochbruck1} 
to obtain a second-order Magnus type integrator for non-autonomous deterministic  parabolic partial differential equation (PDE). Note that the convergence analysis in \cite{Ostermann1,Hochbruck1} was only done in time.

Throughout this paper, we take $t_m=m\Delta t\in[0,T]$, where $T=M\Delta t$ for $m, M\in\mathbb{N}$, $m\leq M$.
Motivated by \cite{Ostermann1,Hochbruck1}, we introduce the following fully discrete scheme for \eqref{model1},  called stochastic Magnus-type integrators (SMTI)
\begin{eqnarray}
\label{scheme3}
X^h_{m+1}&=&e^{\Delta tA_{h,m}}X^h_m+\Delta t\varphi_1(\Delta tA_{h,m})P_hF\left(t_{m}, X^h_m\right)\nonumber\\
& +&e^{\Delta tA_{h,m}}P_hB\left(t_{m}, X^h_m\right)\Delta W_m, \quad m=0,\cdots, M,
\end{eqnarray}
$\quad X^h_0=P_hX_0$, where the linear operator $\varphi_1(\Delta t A_{h,m})$ is given by 
\begin{eqnarray}
\varphi_1(\Delta tA_{h,m}):=\dfrac{1}{\Delta t}\int_0^{\Delta t}e^{(\Delta t-s)A_{h,m}}ds,\quad A_{h,m}:=A_h\left(t_{m}\right), 
\end{eqnarray}
and for any $M\in\mathbb{N}$, $\Delta t=T/M$, $t_m=m\Delta t$, $m=0,1,\cdots,M$ and
 \begin{eqnarray}
 \Delta W_m :=W_{(m+1)\Delta t}-W_{m\Delta t}.
 \end{eqnarray}
  Note that the numerical scheme \eqref{scheme3} can be written in the following integral form, useful for the error analysis 
 \begin{eqnarray}
 \label{scheme4}
 X^h_{m+1}&=&e^{\Delta tA_{h,m}}X^h_m+\int_{t_m}^{t_{m+1}}e^{(t_{m+1}-s)A_{h,m}}P_hF\left(t_{m}, X^h_m\right)ds\nonumber\\
 &+&\int_{t_m}^{t_{m+1}}e^{\Delta tA_{h,m}}P_hB\left(t_{m}, X^h_m\right)dW(s).
 \end{eqnarray}
We also note that an equivalent formulation of  the numerical scheme \eqref{scheme3}, easy for simulation is given by
{\small
\begin{eqnarray}
\label{scheme5}
X^h_{m+1}&=&X^h_m+P_hB\left(t_{m}, X^h_m\right)\Delta W_m\nonumber\\
&+&\Delta t\varphi_1(\Delta tA_{h,m})\left[A_{h,m}\left\{X^h_m+P_hB\left(t_{m}, X^h_m\right)\Delta W_m\right\}+P_hF\left(t_{m}, X^h_m\right)\right].
\end{eqnarray}
}

With the numerical method in hand, we can now state its strong convergence result toward the exact solution, which is in fact our main result.
In the rest of this paper  $C$ is a generic constant independent of $h$, $m$, $M$ and $\Delta t$ that may change from one place to another. 
\begin{theorem}\textbf{[Main result]}
\label{mainresult1}
Let Assumptions  \ref{assumption1}, \ref{assumption2}, \ref{assumption3} and \ref{assumption4} be fulfilled. 
\begin{itemize}
\item[(i)] If $0<\beta<1$, then the following error estimate holds
\begin{eqnarray}
\label{main1}
 \left(\mathbb{E}\Vert X(t_m)-X^h_m\Vert^2\right)^{1/2}\leq C\left(h^{\beta}+\Delta t^{\beta/2}\right).
\end{eqnarray}
\item[(ii)] If $1\leq \beta<2$ and moreover  if \assref{assumption5} is satisfied, then the following error estimate holds
\begin{eqnarray}
\label{main2}
\left(\mathbb{E}\Vert X(t_m)-X^h_m\Vert^2\right)^{1/2}\leq C\left(h^{\beta}+\Delta t^{1/2}\right).
\end{eqnarray}
\item[(iii)] If $\beta=2$ and if \assref{assumption5} is fulfilled, then the following error estimate holds
\begin{eqnarray}
\label{main3}
\left(\mathbb{E}\Vert X(t_m)-X^h_m\Vert^2\right)^{1/2}\leq C\left[h^2\left(1+\max(0,\ln(t_m/h^2)\right)+\Delta t^{1/2}\right].
\end{eqnarray}
\end{itemize}
\end{theorem}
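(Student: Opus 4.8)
The plan is to compare $X^h_m$ with the mild solution \eqref{mild1} at the grid nodes, split the global error into initial-data, drift and diffusion contributions, and close a discrete Gronwall recursion. First I would iterate the scheme \eqref{scheme4} into the closed form
\begin{eqnarray*}
X^h_m &=& \mathcal{S}_{m,0}P_hX_0 + \sum_{k=0}^{m-1}\mathcal{S}_{m,k+1}\int_{t_k}^{t_{k+1}}e^{(t_{k+1}-s)A_{h,k}}P_hF(t_k,X^h_k)\,ds \\
&& + \sum_{k=0}^{m-1}\mathcal{S}_{m,k}P_hB(t_k,X^h_k)\Delta W_k,
\end{eqnarray*}
where $\mathcal{S}_{m,k}:=\prod_{j=k}^{m-1}e^{\Delta tA_{h,j}}$ is the discrete evolution operator. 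Subtracting \eqref{mild1} and regrouping produces an initial-data term built from $\bigl(U(t_m,0)-\mathcal{S}_{m,0}P_h\bigr)X_0$, a deterministic drift term, and a stochastic diffusion term.

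The key auxiliary result, from which the three cases follow, is a space--time error bound for the evolution operator of the type
\begin{eqnarray*}
\left\Vert\bigl(U(t_m,t_k)-\mathcal{S}_{m,k}P_h\bigr)(-A(0))^{-\gamma}\right\Vert_{L(H)}\leq C\Bigl(h^{2\gamma}(t_m-t_k)^{-\rho}+\Delta t^{\gamma}(t_m-t_k)^{-\rho'}\Bigr),
\end{eqnarray*}
with suitable exponents $\rho,\rho'$ depending on $\gamma$, which for $\gamma=\beta/2$ delivers the spatial order $h^{\beta}$ and simultaneously encodes the Magnus-type freezing error incurred when $U(t_{k+1},t_k)$ is replaced by the frozen exponential $e^{\Delta tA_{h,k}}$, as well as the accumulation of these local errors through the telescoped product. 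Its proof is the core of the argument and would combine \assref{assumption2}(ii) to control the increments $\bigl(A(t)-A(s)\bigr)(-A(0))^{-1}$, the smoothing estimates \eqref{smooth}--\eqref{smootha} with their discrete analogues \eqref{smooth2}--\eqref{smooth1}, and the deterministic Magnus-integrator analysis of \cite{Ostermann1,Hochbruck1} transplanted to the semidiscrete non-autonomous setting.

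With this estimate available I would bound the three contributions separately. The initial-data and drift errors are purely deterministic and use the regularity $X_0\in\mathcal{D}((-A(0))^{\beta/2})$ from \assref{assumption1}, the a~priori bound \eqref{borne1} for $\Vert X(t)\Vert_{\beta}$, and the H\"older/Lipschitz properties of $F$ in \assref{assumption3}; the drift integral contributes the temporal order $\Delta t^{\min(\beta,1)}$. The diffusion term is handled by It\^{o}'s isometry \eqref{ito} followed by the same evolution-operator estimate: the square inside the isometry and the ensuing time integral supply the gain of half an order, yielding $\Delta t^{\beta/2}$ for $\beta<1$ and $\Delta t^{1/2}$ for $\beta\geq 1$, while \assref{assumption5} provides the additional spatial regularity $(-A(0))^{(\beta-1)/2}B(s,v)\in L^0_2$ needed to reach the full order $h^{\beta}$ once $\beta\geq 1$. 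Because $F$ and $B$ are evaluated at the unknown $X^h_k$, every contribution also produces a term $\sum_{k}\Delta t\,\mathbb{E}\Vert X(t_k)-X^h_k\Vert^2$, and a discrete Gronwall inequality then closes the recursion.

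The main obstacle is the evolution-operator estimate itself, and especially the borderline regularity $\beta=2$. There the spatial bound degenerates because the weight $(t_m-t_k)^{-1}$ ceases to be integrable as $t_k\to t_m$, and summing $\sum_{k}\Delta t\,(t_m-t_k)^{-1}$ generates precisely the logarithmic factor $\max\bigl(0,\ln(t_m/h^2)\bigr)$ of \eqref{main3}. Controlling this singularity uniformly in $h$ and $\Delta t$, while simultaneously keeping the accumulated freezing errors of the non-commuting product $\mathcal{S}_{m,k}$ under control, is the delicate technical point.
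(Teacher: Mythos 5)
Your overall architecture is viable and is organized genuinely differently from the paper's: you compare $X^h_m$ directly with the exact mild solution \eqref{mild1} through a single combined space--time estimate for $U(t_m,t_k)-\mathcal{S}_{m,k}P_h$, whereas the paper splits the error at the semidiscrete mild solution $X^h(t)$ of \eqref{semi1}, treating the space error $X-X^h$ (\propref{proposition2}, via \lemref{spaceerrorlemma}) and the time error $X^h(t_m)-X^h_m$ (via the semidiscrete evolution family $U_h$) with two separate families of lemmas, so that no combined operator estimate is ever needed. Your closed-form iteration of \eqref{scheme4} is correct (it coincides with \eqref{num2}), and the Gronwall closure on $\Delta t\sum_k\mathbb{E}\Vert X(t_k)-X^h_k\Vert^2$ does work. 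The genuine gap is that your ``key auxiliary result'' is exactly where all of the difficulty of the paper is concentrated, and you leave it as an assertion with unspecified exponents $\rho,\rho'$. In the non-autonomous setting its proof is not a routine transplant of \cite{Ostermann1,Hochbruck1}: one needs the local freezing error $\Vert(U_h(t_j,t_{j-1})-e^{\Delta tA_{h,j-1}})(-A_{h,j-1})^{-\alpha}\Vert_{L(H)}\leq C\Delta t^{1+\alpha}$ (\lemref{smoothing1}, which itself rests on the representation \eqref{ref6}--\eqref{ref7} of $U_h$ and on bounds for $R^h$), a telescoping over the non-commuting product combined with the smoothing estimates for $\prod_{j}e^{\Delta tA_{h,j}}$ (\lemref{lemma2}) and the discrete convolution inequality (\lemref{Stiglemma}); and even then one only obtains order $\Delta t^{1-\epsilon}$ unless additional negative fractional powers are available (\lemref{fonda}), an $\epsilon$-loss your sketch does not anticipate but which the paper must work around (via \assref{assumption5} and \lemref{fonda}(ii)) to reach the clean rate $\Delta t^{1/2}$ for $\beta\geq 1$. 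In addition, the temporal H\"older regularity of the solution being compared (the analogue of \eqref{regular2} in \lemref{regularitylemma}), which you need for the increments $F(s,X(s))-F(t_k,X(t_k))$ and $B(s,X(s))-B(t_k,X(t_k))$, is itself a lemma that must be proved; your sketch uses it silently.

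Second, your explanation of the logarithm in \eqref{main3} is wrong as stated, and following it would not yield the theorem. You attribute $\max(0,\ln(t_m/h^2))$ to the sum $\sum_k\Delta t\,(t_m-t_k)^{-1}$; but that sum is of size $\ln(t_m/\Delta t)$, a logarithm in the time step, which would contaminate the temporal rate $\Delta t^{1/2}$. The factor $\ln(t_m/h^2)$ is purely spatial in origin: for $\beta=2$ the deterministic finite element bound $\Vert T_h(t,s)v\Vert\leq Ch^2(t-s)^{-1}\Vert v\Vert$ must be combined with the uniform bound $\Vert T_h(t,s)v\Vert\leq C\Vert v\Vert$, and integrating $\min\left(1,h^2(t_m-s)^{-1}\right)$ in $s$ with crossover at $t_m-s\sim h^2$ gives $Ch^2\left(1+\ln(t_m/h^2)\right)$. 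In the paper this happens entirely inside the space-error estimate (\propref{proposition2}(iii)) and never interacts with the time grid. A smaller inaccuracy: you claim the drift term contributes $\Delta t^{\min(\beta,1)}$; it only contributes $\Delta t^{\min(\beta,1)/2}$, being limited by the $\min(\beta,1)/2$-H\"older temporal regularity of the solution (compare the estimate of $VI_{41}$) --- this still suffices for the stated rates, but the bookkeeping should be corrected.
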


\section{Proof of the main result}
\label{convergenceproof}
The proof of the main result needs some preparatory results.
\subsection{Preparatory results}
\label{prepa}
The following lemma will be useful in our convergence proof.
\begin{lemma} \cite{Antjd2}
\label{lemma0}
Let \assref{assumption2} be fulfilled. Then for any $\gamma\in[0,1]$, the following estimates hold uniformly in $h>0$ and $t\in[0,T]$
\begin{eqnarray}
\label{equidiscrete1}
K^{-1}\Vert (-(A_h(0))^{-\gamma}v\Vert\leq \Vert ((-A_h(t))^{-\gamma}v\Vert\leq K\Vert ((-A_h(0))^{-\gamma}v\Vert,\quad v\in V_h,\\
\label{equidiscrete2}
K^{-1}\Vert (-(A_h(0))^{\gamma}v\Vert\leq \Vert ((-A_h(t))^{\gamma}v\Vert\leq K\Vert ((A_h(0))^{\gamma}v\Vert,\quad v\in V_h,
\end{eqnarray}
  where $K$ is a positive constant independent of $t$ and $h$.
\end{lemma}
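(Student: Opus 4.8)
The plan is to reduce the two-sided norm equivalences to uniform boundedness of the mixed operators $(-A_h(t))^{\gamma}(-A_h(0))^{-\gamma}$ and $(-A_h(0))^{\gamma}(-A_h(t))^{-\gamma}$ in $L(H)$, with bounds independent of $t\in[0,T]$ and $h>0$. Indeed, once $\Vert (-A_h(t))^{\gamma}(-A_h(0))^{-\gamma}\Vert_{L(H)}\leq K$ is established, applying it to $w=(-A_h(0))^{\gamma}v$ gives $\Vert(-A_h(t))^{\gamma}v\Vert\leq K\Vert(-A_h(0))^{\gamma}v\Vert$, and the reverse inequality follows by swapping the roles of $t$ and $0$; the estimate \eqref{equidiscrete1} for negative powers is the analogous statement for $(-A_h(t))^{-\gamma}(-A_h(0))^{\gamma}$. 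The whole point is that this mirrors the continuous identity \eqref{equivnorme1} of \assref{assumption2}(iii), so I would transport the same mechanism to the discrete operators, which by \eqref{discreteoper} are defined through the \emph{same} bilinear forms $a(t)$ merely restricted to $V_h\subset V$.

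The first concrete step is to prove a uniform-in-$h$ discrete analogue of the perturbation bound \assref{assumption2}(ii), namely
\begin{eqnarray}
\left\Vert \left(A_h(t)-A_h(s)\right)(-A_h(0))^{-1}\right\Vert_{L(H)}\leq C\vert t-s\vert,\qquad s,t\in[0,T],\nonumber
\end{eqnarray}
with $C$ independent of $h$. This follows from \eqref{discreteoper}, which gives $\langle(A_h(t)-A_h(s))\phi,\chi\rangle_H=-(a(t)-a(s))(\phi,\chi)$ for $\phi,\chi\in V_h$: the difference $a(t)-a(s)$ only sees the coefficient differences $q_{ij}(\cdot,t)-q_{ij}(\cdot,s)$ and $q_j(\cdot,t)-q_j(\cdot,s)$, which are $O(\vert t-s\vert)$ by the H\"older (Lipschitz) dependence of the coefficients on $t$, so $\vert(a(t)-a(s))(\phi,\chi)\vert\leq C\vert t-s\vert\,\Vert\phi\Vert_1\Vert\chi\Vert_1$. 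Combining this with the coercivity \eqref{ellip2} and the uniform sectoriality \eqref{sectorial1}, which together bound $(-A_h(0))^{-1}$ and its square root uniformly in $h$, yields the claimed estimate; the dual bound for $(-A_h(0))^{-1}(A_h(t)-A_h(s))$ is obtained the same way after passing to the adjoint forms.

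With this in hand I would treat the endpoints $\gamma=0,1$ first and then interpolate. For $\gamma=1$ one writes
\begin{eqnarray}
(-A_h(t))(-A_h(0))^{-1}=\mathbf{I}-\left(A_h(t)-A_h(0)\right)(-A_h(0))^{-1},\nonumber
\end{eqnarray}
which is uniformly bounded by the previous step, and $\gamma=0$ is trivial. To reach $\gamma\in(0,1)$, I would use the integral representation of the fractional powers afforded by the uniform sectoriality \eqref{sectorial1}, together with the uniform analytic-semigroup smoothing estimates \eqref{smooth2}--\eqref{smooth1}, to compare $(-A_h(t))^{\gamma}$ and $(-A_h(0))^{\gamma}$ through a resolvent-difference (equivalently, complex-interpolation) argument that upgrades the two endpoint bounds to all intermediate $\gamma$. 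Applying the same argument with $t$ and $0$ interchanged, and passing to adjoints for the negative powers, delivers both \eqref{equidiscrete1} and \eqref{equidiscrete2}.

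The main obstacle is precisely this fractional step. Because the $A_h(t)$ are \emph{not} self-adjoint, the clean Hilbert-space identification of $\mathcal{D}((-A_h(t))^{\gamma})$ with the complex interpolation space $[H,\mathcal{D}(A_h(t))]_{\gamma}$ is not automatic, and one must secure it — or the equivalent resolvent-comparison bound on $(-A_h(t))^{\gamma}(-A_h(0))^{-\gamma}$ — with constants uniform in \emph{both} $t$ and the mesh size $h$. It is here that the $h$-uniform sectoriality \eqref{sectorial1} and smoothing bounds \eqref{smooth2}--\eqref{smooth1} are indispensable: they are exactly what prevents the interpolation/imaginary-power constants from degenerating as $h\to 0$, which would otherwise break the uniformity asserted in the lemma.
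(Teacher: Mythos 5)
Note first that the paper never proves this lemma: it is quoted verbatim from \cite{Antjd2}, so your proposal can only be judged on its own merits, and its overall architecture (reduce to uniform boundedness of the mixed operators, settle the endpoints $\gamma=0,1$ via a discrete analogue of \assref{assumption2}(ii), then pass to fractional $\gamma$) is a reasonable one. However, both substantive steps have genuine gaps. For the endpoint, your derivation of $\Vert (A_h(t)-A_h(s))(-A_h(0))^{-1}\Vert_{L(H)}\leq C\vert t-s\vert$ from the form bound $\vert (a(t)-a(s))(\phi,\chi)\vert\leq C\vert t-s\vert\,\Vert\phi\Vert_1\Vert\chi\Vert_1$ does not go through as written: the $L(H)$-norm requires $\sup_{\chi\in V_h}\vert(a(t)-a(s))(\phi,\chi)\vert/\Vert\chi\Vert$, and the form bound leaves you with $\Vert\chi\Vert_1$, which is not controlled by $\Vert\chi\Vert$ uniformly in $h$ (the inverse inequality costs $h^{-1}$). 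In the continuous setting one would integrate by parts to move the derivative off the test function, but here $\phi=(-A_h(0))^{-1}w$ is only piecewise linear, so elementwise integration by parts leaves nonvanishing jump terms on interelement faces. The standard repair --- and the reason \lemref{lemma0a} is a nontrivial citation rather than a two-line computation --- is to write $\phi=R_h(0)u$ with $u\in D$ solving $A(0)u=A_h(0)\phi$ (Ritz projection), integrate by parts on the smooth function $u$ using elliptic regularity, and absorb the Ritz error $O(h)\Vert u\Vert_{H^2}$ against the inverse inequality $O(h^{-1})$. Your sketch contains none of this.

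The more serious gap is the fractional step, exactly where you located the difficulty but then resolved it with an incorrect claim. Uniform resolvent sectoriality \eqref{sectorial1} and the smoothing bounds \eqref{smooth2}--\eqref{smooth1} do \emph{not} control imaginary powers, nor do they identify $\mathcal{D}((-A_h(t))^{\gamma})$ with a complex interpolation space: for merely (resolvent-)sectorial operators on a Hilbert space one only gets the real-interpolation sandwich $(H,\mathcal{D}(A))_{\gamma,1}\subset\mathcal{D}(A^{\gamma})\subset(H,\mathcal{D}(A))_{\gamma,\infty}$, which, applied to $-A_h(t)$ and $-A_h(0)$ with equivalent graph norms, yields the comparison of fractional powers only with an arbitrarily small \emph{loss} in the exponent, not the sharp equivalence asserted in \eqref{equidiscrete1}--\eqref{equidiscrete2}. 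The ``equivalent'' resolvent-difference route fares no better: writing $(-A_h(t))^{\gamma}(-A_h(0))^{-\gamma}-\mathbf{I}$ via the Balakrishnan integral and the second resolvent identity, the best available bounds $\Vert(-A_h(t))^{\gamma}(\lambda-A_h(t))^{-1}\Vert\leq C\lambda^{\gamma-1}$, $\Vert(A_h(t)-A_h(0))(-A_h(0))^{-1}\Vert\leq C$ and $\Vert(-A_h(0))(\lambda-A_h(0))^{-1}\Vert\leq C$ produce an integrand of size $\lambda^{-1}$ at infinity, i.e.\ a logarithmically divergent integral. What actually closes this step is structure you never invoke: by the uniform coercivity \eqref{ellip2} and the uniform boundedness of the forms $a(t)$ on $V_h\times V_h$, the numerical range of every $-A_h(t)$ lies in one fixed sector independent of $h$ and $t$, so these operators are uniformly m-sectorial (in particular m-accretive); for such operators on a Hilbert space, Kato's generalization of the Heinz inequality (equivalently, the universal bound on imaginary powers, or the Crouzeix--Delyon $H^{\infty}$-calculus) converts the endpoint bound $\Vert(-A_h(t))(-A_h(0))^{-1}\Vert\leq M$ into $\Vert(-A_h(t))^{\gamma}(-A_h(0))^{-\gamma}\Vert\leq C_{\gamma}M^{\gamma}$ with constants depending only on $\gamma$ and the sector, hence uniform in $h$ and $t$. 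Without replacing your appeal to \eqref{sectorial1} and \eqref{smooth2}--\eqref{smooth1} by this numerical-range/m-accretivity argument (or some substitute of equal strength), the proof of the lemma as stated is not complete.
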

%For  $t\in[0,T]$,  we us introduce the Ritz projection $R_h(t) :V\longrightarrow V_h$ defined by 
%\begin{eqnarray}
%\label{ritz1}
%\langle -A(t)R_h(t)v,\chi\rangle_H=\langle -A(t)v,\chi\rangle_H=a(t)(v,\chi),\quad v\in V,\quad \chi\in V_h.
%\end{eqnarray}
%Under the regularity assumptions on the triangulation and in view of the V-ellipticity condition \eqref{ellip}, it is well known (see e.g.  \cite[(3.2)]{Luskin} or \cite{Ciarlet,Suzuki}) that the following error estimate holds
%\begin{eqnarray}
%\label{ritz2}
%\Vert R_h(t)v-v\Vert+h\Vert R_h(t)v-v\Vert_{H^1(\Lambda)}\leq Ch^{r}\Vert v\Vert_{H^{r}(\Lambda)},\quad v\in V\cap H^{r}(\Lambda),
%\end{eqnarray}
%for any $ r\in[1,2]$. 
%The following error estimate also holds (see e.g. \cite[(3.3)]{Luskin} or \cite{Ciarlet,Suzuki}) 
%{\small
%\begin{eqnarray}
%\label{ritz3}
%\Vert D_t\left(R_h(t)v-v\right)\Vert+h\Vert D_t\left(R_h(t)v-v\right)\Vert_{H^1(\Lambda)}\leq Ch^{r}\left(\Vert v\Vert_{H^{r}(\Lambda)}+\Vert D_tv\Vert_{H^{r}(\Lambda)}\right),
%\end{eqnarray}
%}
%for any $r\in[1,2]$ and $v\in V\cap H^r(\Lambda)$.
\begin{lemma}\cite{Antjd2}
\label{lemma0a} 
Under \assref{assumption2}, the following estimates hold
{\small
\begin{eqnarray}
\label{ref3}
\Vert (A_h(t)-A_h(s))(-A_h(r))^{-1}u^h\Vert&\leq& C\vert t-s\vert\Vert u^h\Vert,\quad r,s,t\in[0,T],\quad u^h\in V_h,\\
\label{ref2}
\Vert (-A_h(r))^{-1}\left(A_h(s)-A_h(t)\right)u^h\Vert&\leq& C\vert s-t\vert\Vert u^h\Vert,\quad r,s,t\in[0,T],\quad u^h\in V_h.
\end{eqnarray}
}
\end{lemma}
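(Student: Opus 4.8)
The two bounds are the discrete counterparts of the continuous Lipschitz estimates in \assref{assumption2}(ii), so the plan is to transfer those continuous estimates to $V_h$ while keeping every constant independent of $h$. The common device will be $L^2$-duality on the finite-dimensional space $V_h$: writing $z^h := (A_h(t)-A_h(s))(-A_h(r))^{-1}u^h$ and using the defining relation $\langle A_h(\tau)\phi^h,\chi^h\rangle_H = -a(\tau)(\phi^h,\chi^h)$ for $\phi^h,\chi^h\in V_h$, I reduce the first estimate \eqref{ref3} to bounding the bilinear-form difference $(a(t)-a(s))(\phi^h,\psi^h)$, where $\phi^h := (-A_h(r))^{-1}u^h$ carries discrete elliptic regularity and $\psi^h\in V_h$ with $\Vert\psi^h\Vert=1$ is the generic test function produced by duality.

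The crucial observation is that $\phi^h$ is exactly the Ritz projection, with respect to the form $a(r)$, of the continuous solution $\phi := (-A(r))^{-1}u^h\in D$; indeed $a(r)(\phi-\phi^h,\chi^h)=0$ for all $\chi^h\in V_h$ by construction. I therefore split $\phi^h = \phi + (\phi^h-\phi)$ and treat the two contributions separately. For the smooth part, since $\phi\in D=\mathcal{D}(A(t))$ one has $(a(t)-a(s))(\phi,\psi^h) = \langle -(A(t)-A(s))\phi,\psi^h\rangle_H$, and the first continuous Lipschitz bound of \assref{assumption2}(ii), combined with the norm equivalence \eqref{equivnorme1} to exchange $A(r)$ for $A(0)$, gives $\Vert(A(t)-A(s))(-A(r))^{-1}u^h\Vert\leq C|t-s|\Vert u^h\Vert$; hence this term is bounded by $C|t-s|\Vert u^h\Vert\Vert\psi^h\Vert$.

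The Ritz-error term $(a(t)-a(s))(\phi^h-\phi,\psi^h)$ is where I expect the real difficulty to lie. A direct estimate of the form difference only yields $C|t-s|\Vert\phi^h-\phi\Vert_1\Vert\psi^h\Vert_1$, and the factor $\Vert\psi^h\Vert_1$ cannot be controlled by $\Vert\psi^h\Vert$ for a generic $\psi^h$. The way out is to spend the Ritz accuracy against an inverse inequality: the standard C\'{e}a/Ritz estimate gives $\Vert\phi^h-\phi\Vert_1\leq Ch\Vert\phi\Vert_2\leq Ch\Vert u^h\Vert$, using the uniform elliptic regularity $\Vert\phi\Vert_2\leq C\Vert A(r)\phi\Vert=C\Vert u^h\Vert$, while the inverse inequality on the quasi-uniform mesh gives $\Vert\psi^h\Vert_1\leq Ch^{-1}\Vert\psi^h\Vert$. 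The two powers of $h$ cancel, leaving $C|t-s|\Vert u^h\Vert\Vert\psi^h\Vert$ with a constant independent of $h,r,s,t$. Taking the supremum over $\psi^h$ then proves \eqref{ref3}.

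For the second estimate \eqref{ref2} the same scheme applies after moving $(-A_h(r))^{-1}$ onto the test function through the discrete transpose of $A_h(r)$, the operator associated with the form $(\phi,\chi)\mapsto a(r)(\chi,\phi)$: by duality it suffices to bound $(a(s)-a(t))(u^h,\psi^h)$, where now $u^h$ is the generic $L^2$ argument and $\psi^h$ solves the transposed discrete problem, i.e. is the Ritz projection of the continuous adjoint solution $(-A(r)^{\ast})^{-1}g^h$. Since the adjoint family $A(t)^{\ast}$ again has coefficients that are smooth in $x$ and H\"{o}lder/Lipschitz in $t$, it satisfies \assref{assumption2}, so the second continuous Lipschitz bound and the companion Ritz estimate carry over verbatim and the same $h/h^{-1}$ cancellation closes the argument. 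The main obstacle throughout is precisely this Ritz-error term, whose naive bound loses a power of $h$; the whole argument hinges on the inverse inequality restoring $h$-uniformity, and the remaining work is the bookkeeping that the elliptic-regularity, C\'{e}a, inverse-inequality and norm-equivalence constants (the latter from \lemref{lemma0} and \eqref{equivnorme1}) are all uniform in $r,s,t$ and $h$.
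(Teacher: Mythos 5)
The paper itself contains no proof of this lemma: it is imported verbatim from \cite{Antjd2}, so the only ``paper approach'' to compare against is a citation. Judged on its own merits, your self-contained argument is essentially sound and follows a genuinely constructive finite-element route. The identification of $\phi^h=(-A_h(r))^{-1}u^h$ as the $a(r)$-Ritz projection of $\phi=(-A(r))^{-1}u^h$ is correct (Galerkin orthogonality is immediate from \eqref{discreteoper}); the smooth part is controlled exactly by the first bound of \assref{assumption2}(ii) combined with the $\alpha=1$ case of \eqref{equivnorme1}, which gives $\Vert(-A(0))(-A(r))^{-1}\Vert_{L(H)}\leq K_2$; the adjoint reduction for \eqref{ref2} is legitimate, since the second bound of \assref{assumption2}(ii) passes to adjoints with the same norm and the transposed coefficients of \eqref{family} have the same regularity; and the C\'{e}a/inverse-inequality cancellation $h\cdot h^{-1}$ does close the Ritz-error term with constants independent of $h,r,s,t$. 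This is a perfectly viable proof of both estimates in the concrete divergence-form setting of \secref{spacediscretization}.

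You should, however, state explicitly three hypotheses that your argument consumes and that the paper never formulates, because $h$-uniformity of the constants is the entire content of the lemma. First, the global inverse inequality $\Vert\psi^h\Vert_{1}\leq Ch^{-1}\Vert\psi^h\Vert$ requires the family of triangulations to be quasi-uniform, not merely shape-regular; the paper's ``usual regularity assumptions'' do not automatically grant this. Second, the bound $\Vert\phi\Vert_{H^2(\Lambda)}\leq C\Vert A(r)\phi\Vert$, uniformly in $r$ (and its adjoint analogue for \eqref{ref2}), is an $H^2$ elliptic-regularity assumption, valid on convex or $C^{1,1}$ domains; it is implicit in the paper's characterization $\mathcal{D}(-A(t))=H^2(\Lambda)\cap H^1_0(\Lambda)$ but is an assumption nonetheless. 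Third, the form estimate $\vert(a(t)-a(s))(u,v)\vert\leq C\vert t-s\vert\Vert u\Vert_{1}\Vert v\Vert_{1}$ does not follow from \assref{assumption2}(ii) read literally: it needs either the coefficients $q_{ij},q_j$ to be Lipschitz in $t$ (the paper only asserts H\"{o}lder continuity, which would not suffice here, nor indeed for \assref{assumption2}(ii) itself), or an interpolation argument between the two operator bounds of \assref{assumption2}(ii), using $\mathcal{D}((-A(0))^{1/2})=V$. With these points made explicit, your proof is complete and has the merit, compared with the bare citation to \cite{Antjd2}, of exposing exactly which finite-element ingredients make the discrete Lipschitz bounds uniform in $h$.
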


\begin{remark}
\label{evolutionremark}
From \lemref{lemma0a} and the fact that $\mathcal{D}(A_h(t))=\mathcal{D}(A_h(0))$, it follows from \cite[Theorem 6.1, Chapter 5]{Pazy} that there exists a unique evolution system $U_h :\Delta(T)\longrightarrow L(H)$, satisfying \cite[(6.3), Page 149]{Pazy}
\begin{eqnarray}
\label{ref6}
U_h(t,s)=S^h_s(t-s)+\int_s^tS^h_{\tau}(t-\tau)R^h(\tau,s)d\tau,
\end{eqnarray}
where $S^h_s(t):=e^{A_h(s)t}$, $R^h(t,s):=\sum\limits_{m=1}^{\infty}R^h_m(t,s)$, with $R^h_m(t,s)$ satisfying the following  recurrence relation \cite[(6.22), Page 153]{Pazy}
\begin{eqnarray}
R^h_{m+1}=\int_s^tR^h_1(t,s)R^h_m(\tau,s)d\tau,\quad m\geq 1
\end{eqnarray}
and $R^h_1(t,s):=(A_h(s)-A_h(t))S^h_s(t-s)$. Note also that from \cite[(6.6), Chpater 5, Page 150]{Pazy}, the following identity holds 
\begin{eqnarray}
\label{ref7}
R^h(t,s)=R_1^h(t,s)+\int_s^tR_1^h(t,\tau)R^h(\tau,s)d\tau.
\end{eqnarray} 
The mild solution of \eqref{semi1} is therefore given by
\begin{eqnarray}
\label{mild4}
X^h(t)&=&U_h(t,0)P_hX_0+\int_0^tU_h(t,s)P_hF\left(s,X^h(s)\right)ds\nonumber\\
&+&\int_0^tU_h(t,s)P_hB\left(s,X^h(s)\right)dW(s).
\end{eqnarray}
\end{remark}
\begin{lemma}
\label{pazylemma}
Under \assref{assumption2}, the  evolution system $U_h :\Delta(T)\longrightarrow H$ satisfies the following
\begin{itemize}
\item[(i)] $U_h(.,s)\in C^1(]s,T]; L(H))$, $0\leq s\leq T$ and 
\begin{eqnarray}
\frac{\partial U_h}{\partial t}(t,s)=-A_h(t)U_h(t,s), \quad 0\leq s\leq t\leq T,\\
\Vert A_h(t)U_h(t,s)\Vert_{L(H)}\leq \frac{C}{t-s},\quad 0\leq s<t\leq T.
\end{eqnarray}
\item[(ii)] $U_h(t,.)u\in C^1([0,t[; H)$, $0<t\leq T$, $u\in\mathcal{D}(A_h(0))$ and 
\begin{eqnarray}
\frac{\partial U_h}{\partial s}(t,s)u=-U_h(t,s)A_h(s)u,\quad 0\leq s\leq t\leq T,\\
 \Vert A_h(t)U_h(t,s)A_h(s)^{-1}\Vert_{L(H)}\leq C, \quad 0\leq s\leq t\leq T.
\end{eqnarray}
\end{itemize}
\end{lemma}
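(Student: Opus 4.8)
The plan is to observe that \lemref{pazylemma} is exactly the discrete counterpart of \propref{proposition1} (Pazy's Theorem 6.1, Chapter 5), now applied to the family $\{A_h(t)\}_{t\in[0,T]}$ rather than $\{A(t)\}$, and that the whole difficulty reduces to tracking the dependence of the constants on $h$. Indeed, by \eqref{sectorial1} the operators $A_h(t)$ are sectorial uniformly in $h$ and $t$, they share the common domain $V_h$ so that $\mathcal{D}(A_h(t))=\mathcal{D}(A_h(0))$, and by \lemref{lemma0a} they satisfy the discrete Lipschitz conditions with a constant independent of $h$. Hence the hypotheses of Pazy's construction hold, and the existence of $U_h$, its $C^1$ regularity, and the two differential identities $\partial_t U_h(t,s)=-A_h(t)U_h(t,s)$ and $\partial_s U_h(t,s)u=-U_h(t,s)A_h(s)u$ follow verbatim, as already invoked in \remref{evolutionremark}. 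The only content that must genuinely be checked is that the constants in the two operator bounds can be chosen independently of $h$.

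For the bound $\Vert A_h(t)U_h(t,s)\Vert_{L(H)}\le C/(t-s)$ I would start from the representation \eqref{ref6}, namely $U_h(t,s)=S^h_s(t-s)+\int_s^t S^h_\tau(t-\tau)R^h(\tau,s)\,d\tau$, and apply $A_h(t)$. For the leading term I would write $A_h(t)S^h_s(t-s)=\big(A_h(t)(-A_h(s))^{-1}\big)\big((-A_h(s))e^{(t-s)A_h(s)}\big)$: the first factor is bounded uniformly in $h$ since $A_h(t)(-A_h(s))^{-1}=\mathbf{I}+(A_h(t)-A_h(s))(-A_h(s))^{-1}$ and the perturbation is controlled by \lemref{lemma0a}, while the second factor is bounded by $C_3(t-s)^{-1}$ by the uniform smoothing estimate \eqref{smooth2}. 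The same splitting yields $\Vert A_h(t)S^h_\tau(t-\tau)\Vert_{L(H)}\le C(t-\tau)^{-1}$ uniformly in $h$; and, because the Lipschitz exponent in \lemref{lemma0a} is $1$, the first term $R^h_1(\tau,s)=(A_h(s)-A_h(\tau))S^h_s(\tau-s)$ is \emph{bounded} uniformly in $h$ (the factor $\vert s-\tau\vert$ cancels the $(\tau-s)^{-1}$ singularity of the semigroup), so the series $R^h=\sum_m R^h_m$ converges with an $h$-independent bound.

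The main obstacle is the integral term $A_h(t)\int_s^t S^h_\tau(t-\tau)R^h(\tau,s)\,d\tau$, since the kernel $\Vert A_h(t)S^h_\tau(t-\tau)\Vert\sim (t-\tau)^{-1}$ is not integrable against a merely bounded $R^h$. As in Pazy, I would add and subtract the value at $\tau=t$, writing the integrand as $A_h(t)S^h_\tau(t-\tau)\big(R^h(\tau,s)-R^h(t,s)\big)+A_h(t)S^h_\tau(t-\tau)R^h(t,s)$. The first piece becomes integrable once one proves a \emph{uniform} Hölder estimate $\Vert R^h(\tau,s)-R^h(t,s)\Vert_{L(H)}\le C(t-\tau)^{\rho}$ for some $\rho\in(0,1]$, since it then combines with $(t-\tau)^{-1}$ to give the integrable singularity $(t-\tau)^{\rho-1}$; the second piece is handled through $\int_s^t A_h(t)S^h_\tau(t-\tau)\,d\tau$ estimated by \eqref{smooth2}--\eqref{smooth1} together with the uniform boundedness of $A_h(t)(-A_h(\tau))^{-1}$. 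Establishing this Hölder modulus of $R^h(\cdot,s)$ uniformly in $h$, using \eqref{ref7}, the Hölder continuity of $R^h_1(\cdot,s)$ and the uniform smoothing bounds, is the delicate heart of the argument and the step I expect to require the most care.

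Finally, for part (ii) the bound $\Vert A_h(t)U_h(t,s)A_h(s)^{-1}\Vert_{L(H)}\le C$ I would obtain from the same representation multiplied on the right by $A_h(s)^{-1}$: the leading term becomes $\big(A_h(t)(-A_h(s))^{-1}\big)e^{(t-s)A_h(s)}$, which is bounded uniformly in $h$ with \emph{no} singularity because the factor $(-A_h(s))^{-1}$ absorbs the $(t-s)^{-1}$, while the integral term is treated by the same add-and-subtract Hölder trick after noting that $R^h(\tau,s)A_h(s)^{-1}$ is uniformly bounded. Throughout, one invokes only \eqref{sectorial1}, \eqref{smooth2}, \eqref{smooth1} and \lemref{lemma0a}, all of which are uniform in $h$, so every resulting constant is independent of $h$, which is precisely the assertion of the lemma.
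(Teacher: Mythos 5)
Your proposal takes essentially the same route as the paper: the paper's proof is a one-line reference stating that the argument of \cite[Theorem 6.1, Chapter 5]{Pazy} carries over to the family $A_h(t)$ using the uniform smoothing estimates \eqref{smooth2}--\eqref{smooth1} and Lemmas \ref{lemma0a} and \ref{lemma0}, which is precisely the adaptation (with $h$-independent constants) that you carry out in detail. Your sketch is correct, up to the harmless sign slip $A_h(t)(-A_h(s))^{-1}=(A_h(t)-A_h(s))(-A_h(s))^{-1}-\mathbf{I}$ (not $+\mathbf{I}$), which does not affect any of the bounds.
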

\begin{proof}
The proof is similar  to that of \cite[Theorem 6.1, Chapter 5]{Pazy} using \eqref{smooth1}, \eqref{smooth2}, Lemmas \ref{lemma0a} and \ref{lemma0}.
\end{proof}

\begin{lemma}\cite{Antjd2}
\label{evolutionlemma}
Let  \assref{assumption2} be fulfilled. 
\begin{itemize}
\item[(i)] The following estimates hold
\begin{eqnarray}
\label{reste1}
\Vert R^h_1(t,s)\Vert_{L(H)}\leq C,&&\quad
\Vert R^h_m(t,s)\Vert_{L(H)}\leq \frac{C}{m!}(t-s)^{m-1},\quad m\geq 1,\\
\label{reste2}
\Vert R^h(t,s)\Vert_{L(H)}\leq C,&&\quad \Vert U_h(t,s)\Vert_{L(H)}\leq C,\quad 0\leq s\leq t\leq T.
\end{eqnarray}
\item[(ii)] For any $0\leq\alpha\leq 1$, $0\leq\gamma\leq 1$ and $0\leq s\leq t\leq T$, the following estimates holds
\begin{eqnarray}
\label{ae1}
\Vert (-A_h(r))^{\alpha}U_h(t,s)\Vert_{L(H)}&\leq& C(t-s)^{-\alpha},\quad r\in[0,T],\\
\label{ae3}
\Vert U_h(t,s)(-A_h(r))^{\alpha}\Vert_{L(H)}&\leq& C(t-s)^{-\alpha},\quad r\in[0,T],\\
\label{ae2}
 \Vert (-A_h(r))^{\alpha}U_h(t,s)(-A_h(s))^{-\gamma}\Vert_{L(H)}&\leq& C(t-s)^{\gamma-\alpha}, \quad r\in[0,T].
\end{eqnarray}
\item[(iii)] For any $0\leq s\leq t\leq T$ the following useful estimates hold 
\begin{eqnarray}
\label{hen1}
\Vert \left(U_h(t,s)-\mathbf{I}\right)(-A_h(s))^{-\gamma}\Vert_{L (H)}&\leq& C(t-s)^{\gamma}, \quad 0\leq \gamma\leq 1,\\
\label{hen2}
\Vert \left (-A_h(r))^{-\gamma}(U_h(t,s)-\mathbf{I}\right)\Vert_{L (H)}&\leq& C(t-s)^{\gamma}, \quad 0\leq \gamma\leq 1.
\end{eqnarray}
\end{itemize}
\end{lemma}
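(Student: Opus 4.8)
The plan is to reproduce, uniformly in $h$, the construction of the evolution system from \cite[Chapter 5]{Pazy}, feeding in the three discrete ingredients at our disposal: the uniform smoothing bounds \eqref{smooth2}--\eqref{smooth1}, the time-Lipschitz estimates of \lemref{lemma0a}, and the equivalence of fractional powers anchored at different times from \lemref{lemma0}. Since each of these carries a constant independent of $h$ and $t$, every constant produced below is automatically uniform in $h$, which is the whole content of the lemma.

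For (i), I would first estimate $R^h_1$ through the factorisation $R^h_1(t,s)=(A_h(s)-A_h(t))(-A_h(s))^{-1}\,(-A_h(s))S^h_s(t-s)$, bounding the first factor by $C|t-s|$ via \lemref{lemma0a} and the second by $C(t-s)^{-1}$ via \eqref{smooth2} with $\alpha=1$; the two singularities cancel and yield $\Vert R^h_1(t,s)\Vert_{L(H)}\le C$. Induction on the recurrence $R^h_{m+1}(t,s)=\int_s^tR^h_1(t,\tau)R^h_m(\tau,s)\,d\tau$ then gives the expected factorial decay in $m$, so that the series $R^h=\sum_m R^h_m$ converges with $\Vert R^h(t,s)\Vert_{L(H)}\le C$. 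The bound $\Vert U_h(t,s)\Vert_{L(H)}\le C$ follows immediately from the representation \eqref{ref6}, estimating the semigroup term by \eqref{smooth2} with $\alpha=0$ and the convolution term by $\int_s^t C\,d\tau\le CT$.

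For (ii), the master estimate is \eqref{ae2}, and \eqref{ae1} is its $\gamma=0$ case. I would first invoke \lemref{lemma0} to replace $(-A_h(r))^\alpha$ by a power anchored at $s$ up to a uniform constant, and then apply $(-A_h(s))^\alpha$ and $(-A_h(s))^{-\gamma}$ to \eqref{ref6}. The semigroup contribution is $(-A_h(s))^{\alpha-\gamma}S^h_s(t-s)$, controlled by \eqref{smooth2} in the genuinely singular regime $\gamma\le\alpha$ (the range in which the estimate is used), the complementary range reducing to the bounded estimates of (i). In the convolution term one moves $(-A_h(s))^\alpha$ past $S^h_\tau(t-\tau)$ at the cost of the bounded factor $(-A_h(s))^\alpha(-A_h(\tau))^{-\alpha}$ from \lemref{lemma0}, leaving $\int_s^t(t-\tau)^{-\alpha}\,d\tau\le C(t-s)^{1-\alpha}\le C(t-s)^{\gamma-\alpha}$ since $t-s\le T$ and $\gamma\le1$. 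The right-hand estimate \eqref{ae3} I would derive either by passing to the adjoint evolution system, whose family $A_h(t)^\ast$ satisfies the same hypotheses so that \eqref{ae1} applies, or by differentiating \eqref{ref6} in $s$ and using $\tfrac{\partial U_h}{\partial s}(t,s)=U_h(t,s)(-A_h(s))$ from \lemref{pazylemma}(ii), followed by interpolation with the $\alpha=0$ bound. The delicate point is the endpoint $\alpha=1$ of \eqref{ae1}, where the naive integral $\int_s^t(t-\tau)^{-1}\,d\tau$ diverges; there I would not estimate term by term but invoke \lemref{pazylemma}(i) directly, equivalently interpolating between the $\alpha=0$ and $\alpha=1$ bounds.

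Part (iii) is then a short consequence. Writing $U_h(t,s)-\mathbf{I}=(S^h_s(t-s)-\mathbf{I})+\int_s^tS^h_\tau(t-\tau)R^h(\tau,s)\,d\tau$ and multiplying by $(-A_h(s))^{-\gamma}$ on the right, the first term is $\le C(t-s)^\gamma$ by \eqref{smooth1} and the second by $\int_s^t C\,d\tau\le C(t-s)\le C(t-s)^\gamma$, using $t-s\le T$ and $0\le\gamma\le1$, which establishes \eqref{hen1}; the mirror bound \eqref{hen2} is symmetric. Throughout, the recurring subtlety I would watch is that any mismatch between the time arguments of the fractional powers and of the semigroups must be repaired by the uniform equivalence \lemref{lemma0}, never by an $h$-dependent bound, so that uniformity in $h$ is preserved at every step.
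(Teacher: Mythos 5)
The paper itself gives no proof of this lemma (it is quoted from \cite{Antjd2}), but your reconstruction follows exactly the route the paper's surrounding machinery is built for — the representation \eqref{ref6}--\eqref{ref7} of Remark~\ref{evolutionremark}, the uniform smoothing bounds \eqref{smooth2}--\eqref{smooth1}, and Lemmas~\ref{lemma0}, \ref{lemma0a} and \ref{pazylemma} — and each step is sound: the cancellation of $|t-s|$ against $(t-s)^{-1}$ for $R^h_1$, the factorial induction for $R^h_m$, the commutation of fractional powers past semigroups repaired by \lemref{lemma0}, and the convolution estimates for parts (ii)--(iii) are precisely the standard Pazy-type argument, carried out with $h$-uniform constants. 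The only point worth adding is that the $\alpha=1$ endpoint fix you propose for \eqref{ae1} (invoking \lemref{pazylemma}(i) or interpolating) is equally needed for \eqref{ae2}, whose termwise convolution bound diverges there too; the same argument, now using \lemref{pazylemma}(ii) together with interpolation in $\gamma$, closes that case as well.
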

The following space and time regularity of the semi-discrete problem \eqref{semi1} will be useful in our convergence analysis.
\begin{lemma} 
\label{regularitylemma}
Let  Assumptions \ref{assumption1}, \ref{assumption2} (i)-(ii), \ref{assumption3} and \ref{assumption4} be fulfilled with
the corresponding  $0\leq \beta<1$. Then for all $\gamma\in[0,\beta]$ the following estimates hold
\begin{eqnarray}
\label{regular1}
\Vert (-A_h(r))^{\gamma/2}X^h(t)\Vert_{L^2(\Omega,H)}&\leq& C,\quad\hspace{2cm} 0\leq r,t\leq T,\\
\label{regular2}
\Vert X^h(t_2)-X^h(t_1)\Vert_{L^2(\Omega,H)}&\leq& C(t_2-t_1)^{\beta/2}, \quad 0\leq t_1\leq t_2\leq T.
\end{eqnarray}
Moreover if \assref{assumption5} is fulfilled, then \eqref{regular1} and \eqref{regular2} hold for $\beta=1$.
\end{lemma}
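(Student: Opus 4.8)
The plan is to start from the mild representation \eqref{mild4} of the semi-discrete solution and to estimate each of its three constituents---the initial-data term, the deterministic drift convolution, and the stochastic diffusion convolution---after applying the fractional operator $(-A_h(r))^{\gamma/2}$. The spatial smoothing properties of the evolution system collected in \lemref{evolutionlemma} (chiefly \eqref{ae1} and \eqref{ae2}), together with the linear growth of $F$ and $B$ implied by Assumptions \ref{assumption3} and \ref{assumption4}, the It\^o isometry \eqref{ito}, the sub-multiplicativity \eqref{chow1}, and the norm equivalences of \lemref{lemma0}, will reduce everything to elementary convolution integrals. Since the drift and diffusion integrands carry $\Vert X^h(s)\Vert$, I would first establish the $\gamma=0$ a~priori bound $\sup_{t}\Vert X^h(t)\Vert_{L^2(\Omega,H)}\le C$ by a Gronwall argument, and only then bootstrap to general $\gamma\in[0,\beta]$.

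For the space estimate \eqref{regular1}, the initial term $(-A_h(r))^{\gamma/2}U_h(t,0)P_hX_0$ is handled by inserting $(-A_h(0))^{-\beta/2}(-A_h(0))^{\beta/2}$ and invoking \eqref{ae2}, which yields the factor $t^{(\beta-\gamma)/2}\le C$ because $\gamma\le\beta$, together with a standard projection bound $\Vert(-A_h(0))^{\beta/2}P_hX_0\Vert_{L^2(\Omega,H)}\le C\Vert X_0\Vert_{L^2(\Omega,\mathcal{D}((-A(0))^{\beta/2}))}$. For the drift convolution I would use \eqref{ae1} with $\alpha=\gamma/2$, producing the integrable kernel $(t-s)^{-\gamma/2}$ (note $\gamma/2<1$), and close the resulting inequality in $\Vert(-A_h(r))^{\gamma/2}X^h(s)\Vert$ by Gronwall. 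The diffusion convolution is decisive: by \eqref{ito} and \eqref{chow1} its squared norm equals $\int_0^t\mathbb{E}\Vert(-A_h(r))^{\gamma/2}U_h(t,s)P_hB(s,X^h(s))\Vert_{L^0_2}^2\,ds$, and the crude bound via \eqref{ae1} gives the kernel $(t-s)^{-\gamma}$, integrable precisely when $\gamma<1$. This already settles the full case $0\le\beta<1$ for every $\gamma\in[0,\beta]$.

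For the time estimate \eqref{regular2} I would exploit the evolution identity $U_h(t_2,\sigma)=U_h(t_2,t_1)U_h(t_1,\sigma)$ to factor each difference as $\bigl(U_h(t_2,t_1)-\mathbf{I}\bigr)U_h(t_1,\sigma)$, extract the H\"older factor $(t_2-t_1)^{\beta/2}$ through \eqref{hen1} applied to $\bigl(U_h(t_2,t_1)-\mathbf{I}\bigr)(-A_h(t_1))^{-\beta/2}$, and absorb the remaining factor $(-A_h(t_1))^{\beta/2}U_h(t_1,\sigma)$ into the already-proven space bounds. Each convolution is split as $\int_0^{t_2}-\int_0^{t_1}=\int_{t_1}^{t_2}+\int_0^{t_1}$: the near-diagonal piece contributes $(t_2-t_1)$ for the drift and $(t_2-t_1)^{1/2}$ for the diffusion, both dominated by $(t_2-t_1)^{\beta/2}$ on $[0,T]$ since $\beta\le1$, while the tail piece reproduces the factored structure with integrable kernels $(t_1-\sigma)^{-\beta/2}$ (drift) and $(t_1-\sigma)^{-\beta}$ (diffusion, after It\^o isometry).

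The main obstacle is the borderline case $\gamma=\beta=1$, where the diffusion kernel degenerates to the non-integrable $(t-s)^{-1}$ (and, symmetrically, $(t_1-\sigma)^{-1}$ in the time estimate). This is exactly where \assref{assumption5} is indispensable: instead of pulling the whole operator through in operator norm, I would borrow the extra $(\beta-1)/2$ spatial smoothness of $B$ by inserting $(-A_h(s))^{-(\beta-1)/2}(-A_h(s))^{(\beta-1)/2}$, estimating the first factor by \eqref{ae2} (which now yields exponent $\beta-1-\gamma$) and the second by \assref{assumption5}, after transferring between discrete and continuous fractional powers via the boundedness of $P_h$ and the norm equivalences of \lemref{lemma0}. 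Making this borrowing argument rigorous at the critical exponent, and checking that every constant stays uniform in $h$, $r$ and $t$ while the Gronwall closure remains valid, is the technical heart of the proof.
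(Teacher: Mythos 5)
Your proposal follows essentially the same route as the paper's own proof: first the $\gamma=0$ a priori bound on $\Vert X^h(t)\Vert_{L^2(\Omega,H)}$ via Gronwall, then \eqref{regular1} by applying $(-A_h(r))^{\gamma/2}$ to the three terms of \eqref{mild4} and using \lemref{lemma0}, \eqref{ae1}--\eqref{ae2} and the It\^o isometry (kernels $(t-s)^{-\gamma/2}$ and $(t-s)^{-\gamma}$), and finally \eqref{regular2} via the factorization $U_h(t_2,\sigma)-U_h(t_1,\sigma)=\left(U_h(t_2,t_1)-\mathbf{I}\right)U_h(t_1,\sigma)$ with \eqref{hen1} and the splitting $\int_0^{t_2}=\int_0^{t_1}+\int_{t_1}^{t_2}$, which is exactly the paper's decomposition into the terms $II_0$--$II_2$ and $III_0$--$III_4$. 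Your acknowledgment that the borderline case $\gamma=\beta=1$ is the delicate point matches the paper as well, which likewise only invokes \assref{assumption5} there without spelling out the details.
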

\begin{proof}
 We first show that $\sup\limits_{t\in[0,T]}\Vert X^h(t)\Vert^2_{L^2(\Omega, H)}\leq C$.
 Taking the norm in both side of \eqref{mild4} and using the  inequality $(a+b+c)^2\leq 3a^2+3b^2+3c^2$, $a, b, c\in\mathbb{R}_+$ yields
 {\small
\begin{eqnarray}
\label{regular3}
\Vert X^h(t)\Vert^2_{L^2(\Omega,H)}&\leq& 3\Vert U_h(t,0)P_hX_0\Vert^2_{L^2(\Omega,H)}+3\left\Vert\int_0^tU_h(t,s)P_hF\left(s,X^h(s)\right)ds\right\Vert^2_{L^2(\Omega,H)}ds
\nonumber\\
&+&3\left\Vert\int_0^tU_h(t,s)P_hB\left(s,X^h(s)\right)dW(s)\right\Vert^2_{L^2(\Omega,H)}:=I_0+I_1+I_2.
\end{eqnarray}
}
Using \lemref{evolutionlemma} (i) and the uniformly boundedness of $P_h$, it holds that
\begin{eqnarray}
\label{regular4}
I_0\leq 3\Vert X_0\Vert^2_{L^2(\Omega,H)}\leq C.
\end{eqnarray}
Using again \lemref{evolutionlemma} (i), \assref{assumption3} and the uniformly boundedness of $P_h$, it holds that
{\small
\begin{eqnarray}
\label{regular5}
I_1\leq 3\left(\int_0^t\Vert U_h(t,s)P_hF\left(s,X^h(s)\right)\Vert_{L^2(\Omega,H)}\right)^2\leq C\left(\int_0^t\left(C+\Vert X^h(s)\Vert_{L^2(\Omega,H)}\right)ds\right)^2\nonumber.
\end{eqnarray}
}
Using H\"{o}lder inequality yields
\begin{eqnarray}
\label{regular6}
I_1\leq C+C\int_0^t\Vert X^h(s)\Vert^2_{L^2(\Omega,H)}ds.
\end{eqnarray}
Applying the it\^{o}-isometry's property, using \lemref{evolutionlemma} (i) and \assref{assumption4}, it holds that
{\small
\begin{eqnarray}
\label{regular7}
I_2=3\int_0^t\Vert U_h(t,s)P_hB\left(s,X^h(s)\right)\Vert^2_{L^0_2}ds\leq C+C\int_0^t\Vert X^h(t)\Vert^2_{L^2(\Omega,H)}ds.
\end{eqnarray}
}
Substituting \eqref{regular7}, \eqref{regular6} and \eqref{regular4} in \eqref{regular3} yields
\begin{eqnarray}
\label{regular8}
\Vert X^h(t)\Vert^2_{L^2(\Omega,H)}\leq C+C\int_0^t\Vert X^h(s)\Vert^2_{L^2(\Omega,H)}ds.
\end{eqnarray}
Applying the continuous Gronwall's lemma to \eqref{regular8} yields
\begin{eqnarray}
\label{regular8a}
\Vert X^h(t)\Vert^2_{L^2(\Omega,H)}\leq C,\quad t\in[0,T].
\end{eqnarray}
Let us now prove \eqref{regular1}. 
Pre-multiplying \eqref{mild4} by $(-A_h(r))^{\gamma/2}$, taking  the norm in both sides  and using triangle inequality  yields
{\small
\begin{eqnarray}
\label{regular9}
\left\Vert (-A_h(r))^{\gamma/2}X^h(t)\right\Vert_{L^2(\Omega,H)}&\leq& \left\Vert (-A_h(r))^{\gamma/2}U_h(t,0)P_hX_0\right\Vert_{L^2(\Omega,H)}\nonumber\\
&+&\int_0^t\left\Vert (-A_h(r))^{\gamma/2}U_h(t,s)P_hF\left(s,X^h(s)\right)\right\Vert_{L(\Omega,H)}ds
\nonumber\\
&+&\left\Vert\int_0^t(-A_h(r))^{\gamma/2}U_h(t,s)P_hB\left(s,X^h(s)\right)dW(s)\right\Vert_{L^2(\Omega,H)}\nonumber\\
&:=&II_0+II_1+II_2.
\end{eqnarray}
}
Inserting $(-A_h(0))^{-\gamma/2}(-A_h(0))^{\gamma/2}$, using  \lemref{evolutionlemma} (ii) and \lemref{lemma0}, it holds that
{\small
\begin{eqnarray}
\label{premier}
II_0\leq \Vert (-A_h(r))^{\gamma/2}U_h(t,0)(-A_h(0))^{-\gamma/2}\Vert_{L(H)}\Vert (-A_h(0))^{\gamma/2}X_0\Vert\leq C.
\end{eqnarray}
}
Using  Lemmas \ref{lemma0}, \ref{evolutionlemma} (ii), \assref{assumption3} and \eqref{regular8a} yields
\begin{eqnarray}
\label{deuxieme}
II_1&\leq& C\int_0^t\left\Vert(-A_h(s))^{\gamma/2}U_h(t,s)\right\Vert_{L(H)}\sup_{t\in[0,T]}\left\Vert F\left(s,X^h(s)\right)\right\Vert ds\nonumber\\
&\leq& C\sup_{s\in[0,T]}\left(1+\Vert X^h(s)\Vert_{L^2(\Omega,H)}\right)\int_0^t(t-s)^{-\gamma/2}ds\leq C.
\end{eqnarray}
Applying the It\^{o}-isometry property, using  Lemmas \ref{lemma0}, \ref{evolutionlemma} (ii), \assref{assumption4} and \eqref{regular8a} yields
\begin{eqnarray}
\label{troisieme}
II_2^2&=&\int_0^t\left\Vert (-A_h(0))^{\gamma/2}U_h(t,s)P_hB\left(s,X^h(s)\right)\right\Vert^2_{L^0_2}ds\nonumber\\
%&\leq& C\int_0^t\Vert(-A_h(s))^{\gamma/2}U_h(t,s)\Vert^2_{L(H)}\sup_{t\in[0,T]}\Vert B(s,X^h(s))\Vert^2_{L^0_2} ds\nonumber\\
&\leq& C\sup_{s\in[0,T]}\left(1+\Vert X^h(s)\Vert^2_{L^2(\Omega,H)}\right)\int_0^t(t-s)^{-\gamma}ds\leq C.
\end{eqnarray}
Substituting \eqref{troisieme}, \eqref{deuxieme} and \eqref{premier} in \eqref{regular9} completes the proof of \eqref{regular1}.   The proof of \eqref{regular2} follows from \eqref{mild4}. In fact from \eqref{mild4} we have
\begin{eqnarray}
\label{inter1}
\Vert X^h(t_2)-X^h(t_1)\Vert_{L^2(\Omega,H)}&\leq& \left\Vert \left(U_h(t_2,0)-U_h(t_1,0)\right)P_hX_0\right\Vert_{L^2(\Omega,H)}\nonumber\\
&+&\int_0^{t_1}\left\Vert \left(U_h(t_2,s)-U_h(t_1,s)\right)P_hF\left(s,X^h(s)\right)\right\Vert_{L^2(\Omega,H)} ds\nonumber\\
&+&\int_{t_1}^{t_2}\left\Vert U_h(t_2,s)P_hF\left(s,X^h(s)\right)\right\Vert_{L^2(\Omega,H)} ds\nonumber\\
&+&\left\Vert\int_0^{t_1}U_h(t_2,s)-U_h(t_1,s))P_hB\left(s,X^h(s)\right) dW(s)\right\Vert_{L^2(\Omega,H)}\nonumber\\
&+&\left\Vert\int_{t_1}^{t_2} U_h(t_2,s)P_hB\left(s,X^h(s)\right) dW(s)\right\Vert_{L^2(\Omega,H)}\nonumber\\
&:=&III_0+III_1+III_2+III_3+III_4.
\end{eqnarray}
Inserting an appropriate power of $-A_h(t_1)$, using Lemmas \ref{evolutionlemma} (ii)-(iii) and \cite[Lemma 1]{Antjd1} yields
{\small
\begin{eqnarray}
\label{inter2}
III_0&=& \left\Vert (U_h(t_2,t_1)-\mathbf{I})U_h(t_1,0)P_hX_0\right\Vert_{L^2(\Omega,H)}\nonumber\\
&\leq& \left\Vert (U_h(t_2,t_1)-\mathbf{I})(-A_h(t_1))^{-\beta/2}\right\Vert_{L(H)}\nonumber\\
&&\times\left\Vert (-A_h(t_1))^{\beta/2}U_h(t_1,0)(-A_h(t_1))^{-\beta/2}\right\Vert_{L(H)}\left\Vert (-A_h(t_1))^{\beta/2}P_hX_0\right\Vert_{L^2(\Omega,H)}\nonumber\\
&\leq & C(t_2-t_1)^{\beta/2}.
\end{eqnarray}
}
Using \assref{assumption4}, \eqref{regular1},  \lemref{evolutionlemma} (ii) and (iii) yields
{\small
\begin{eqnarray}
\label{inter3}
III_1&\leq&\int_0^{t_1}\left\Vert (U_h(t_2,t_1)-\mathbf{I})U_h(t_1,s)\right\Vert_{L(H)}\left\Vert P_hF\left(s,X^h(s)\right)\right\Vert_{L^2(\Omega,H)}ds\nonumber\\
&\leq& C\int_0^{t_1}\left\Vert (U_h(t_2,t_1)-\mathbf{I})(-A_h(t_1))^{-\beta/2}\right\Vert_{L(H)}\left\Vert (-A_h(t_1))^{\beta/2}U_h(t_1,s)\right\Vert_{L(H)}ds\nonumber\\
&\leq& C\int_0^{t_1}(t_2-t_1)^{\beta/2}(t_1-s)^{-\beta/2}ds\nonumber\\
&\leq& C(t_2-t_1)^{\beta/2}.
\end{eqnarray}
}
Using \lemref{evolutionlemma} (i) and \assref{assumption3}, it holds that
\begin{eqnarray}
\label{inter4}
III_2\leq C\int_{t_1}^{t_2}\sup_{s\in[0,T]}\left\Vert F\left(s,X^h(s)\right)\right\Vert_{L^2(\Omega,H)}ds\leq C(t_2-t_1).
\end{eqnarray}
Using the It\^{o}-isometry property, \assref{assumption5}, \eqref{regular1}, Lemma \ref{evolutionlemma} (ii)-(iii) and following the same lines as the estimate of $III_1$ yields
\begin{eqnarray}
\label{inter5}
III_3^2\leq C(t_2-t_1)^{\beta}.
\end{eqnarray}
Using the It\^{o}-isometry property and following the same lines as that of $III_2$ yields
\begin{eqnarray}
\label{inter6}
III_4^2\leq C(t_2-t_1).
\end{eqnarray}
Substituting \eqref{inter6}, \eqref{inter5}, \eqref{inter4}, \eqref{inter3} and \eqref{inter2} in \eqref{inter1} completes the proof of \eqref{regular2}.
\end{proof}

Let us consider the following deterministic  problem: find $u\in V$ such that
\begin{eqnarray}
\label{determ1}
u'=A(t)u,\quad u(\tau)=v,\quad t\in(\tau,T].
\end{eqnarray}
The corresponding semi-discrete problem in space is: find $u_h\in V_h$ such that
\begin{eqnarray}
\label{determ2}
u_h'(t)=A_h(t)u_h,\quad u_h(\tau)=P_hv,\quad t\in(\tau,T],\quad \tau\geq 0.
\end{eqnarray}
Let us define the operator
\begin{eqnarray}
T_h(t,\tau):=U(t,\tau)-U_h(t,\tau)P_h,
\end{eqnarray}
so that $u(t)-u_h(t)=T_h(t,\tau)v$. The following lemma will be useful in our convergence analysis. 
\begin{lemma}\cite{Antjd2}
\label{spaceerrorlemma}
Let $r\in[0,2]$ and  $0\leq\gamma\leq r$. Let  \assref{assumption2} be fulfilled.  Then the following error estimate holds for the semi-discrete approximation  \eqref{determ2}
{\small
\begin{eqnarray}
\label{er0}
\Vert u(t)-u_h(t)\Vert=\Vert T_h(t,\tau)v\Vert\leq Ch^r(t-\tau)^{-(r-\gamma)/2}\Vert v\Vert_{\gamma}, \quad  v\in \mathcal{D}\left(\left(-A(0)\right)^{\gamma/2}\right).
\end{eqnarray}
}
\end{lemma}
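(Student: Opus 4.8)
The plan is to reduce the non-autonomous error operator $T_h(t,\tau)$ to the autonomous (frozen-coefficient) finite element error, for which the classical parabolic smoothing estimate is available, and then to control the non-autonomous correction through the Dyson-type representation \eqref{ref6} of the two evolution systems. I keep a general initial time $\tau$. The single indispensable building block is a \emph{uniform frozen estimate}: for each fixed $s\in[0,T]$ the operator $S_s(\cdot)-S^h_s(\cdot)P_h$ should satisfy
\[
\|(S_s(t)-S^h_s(t)P_h)w\|\le Ch^r t^{-(r-\gamma)/2}\|w\|_\gamma,\qquad 0\le\gamma\le r\le 2,
\]
with $C$ independent of $s$. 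Since $A(s)$ is here a \emph{fixed} sectorial operator, this is the standard nonsmooth-data FEM smoothing estimate of the autonomous, not-necessarily-self-adjoint theory; the only genuinely new point is uniformity in the freezing parameter $s$, which I would obtain from the uniform sectoriality of \assref{assumption2}(i), the uniform smoothing bounds \eqref{smooth}, \eqref{smooth2}, the norm equivalences of \lemref{lemma0} and \eqref{equivnorme1}, and the $O(h^r)$ approximation property of the piecewise-linear space $V_h$.

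Next I would insert $U(t,\tau)=S_\tau(t-\tau)+\int_\tau^t S_\sigma(t-\sigma)R(\sigma,\tau)\,d\sigma$ together with its discrete counterpart \eqref{ref6} and subtract, obtaining
\[
T_h(t,\tau)=\underbrace{[S_\tau(t-\tau)-S^h_\tau(t-\tau)P_h]}_{E_0}+\int_\tau^t\underbrace{[S_\sigma(t-\sigma)-S^h_\sigma(t-\sigma)P_h]R(\sigma,\tau)}_{\Xi_1(\sigma)}\,d\sigma+\int_\tau^t\underbrace{S^h_\sigma(t-\sigma)[P_hR(\sigma,\tau)-R^h(\sigma,\tau)P_h]}_{\Xi_2(\sigma)}\,d\sigma.
\]
The term $E_0$ is controlled directly by the building block at $s=\tau$. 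For $\Xi_1$ I would apply the same estimate at $s=\sigma$ to the argument $R(\sigma,\tau)v$ together with $\|R(\sigma,\tau)\|_{L(H)}\le C$ from \lemref{evolutionlemma}(i); for $r<2$, the smoothing exponent $0$ gives $\|\Xi_1(\sigma)v\|\le Ch^r(t-\sigma)^{-r/2}\|v\|$, and since $\int_\tau^t(t-\sigma)^{-r/2}\,d\sigma\lesssim (t-\tau)^{1-r/2}\le C(t-\tau)^{-(r-\gamma)/2}$ for $\gamma\le 2$, this contributes a term of the desired order (using $\|v\|\le C\|v\|_\gamma$ by invertibility of $-A(\tau)$).

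For $\Xi_2$ the uniform boundedness of $S^h_\sigma$ reduces matters to the \emph{consistency error} $\|(P_hR(\sigma,\tau)-R^h(\sigma,\tau)P_h)v\|$. I would expand $R$ and $R^h$ in the Neumann-type series of \remref{evolutionremark}, with $R_1(\sigma,\tau)=(A(\tau)-A(\sigma))S_\tau(\sigma-\tau)$ and its discrete analogue, and telescope each difference $P_hR_m-R^h_mP_h$; every telescoped term brings back only \emph{frozen} error operators (never the full evolution error $T_h$ itself), which are again handled by the building block and the Lipschitz and fractional-power bounds of \assref{assumption2}(ii)--(iii) and \lemref{lemma0a}. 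The factorial decay $\|R_m(\sigma,\tau)\|\le \tfrac{C}{m!}(\sigma-\tau)^{m-1}$ then makes the series summable and produces the factor $h^r$, closing the argument for $0\le r<2$.

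The main obstacle will be the borderline case $r=2$ in the $\Xi_1$ integral, where the exponent $0$ produces the divergent $\int_\tau^t(t-\sigma)^{-1}\,d\sigma$. To reach this endpoint I must instead use a strictly positive smoothing exponent $\gamma'\in(0,2]$ in the frozen estimate, which forces me to bound $\|(-A(\sigma))^{\gamma'/2}R(\sigma,\tau)v\|$, i.e. to move a fractional power to the \emph{left} of the factor $(A(\tau)-A(\sigma))S_\tau(\sigma-\tau)$. Unlike the elementary right-hand bound $\|(A(\sigma)-A(\tau))S_\tau(\sigma-\tau)v\|\le C(\sigma-\tau)^{\gamma/2}\|v\|_\gamma$ (which only uses commutation of $S_\tau$ with powers of $A(\tau)$), this left-smoothing genuinely requires the \emph{second} Lipschitz condition in \assref{assumption2}(ii), namely the bound on $(-A(0))^{-1}(A(t)-A(s))$, interpolated against the norm equivalences \eqref{equivnorme1}; with it one recovers a bound of the form $\|(-A(\sigma))^{\gamma'/2}R(\sigma,\tau)v\|\le C(\sigma-\tau)^{-\gamma'/2}\|v\|$ whose weakly singular integral against $(t-\sigma)^{-(2-\gamma')/2}$ is a finite Beta integral. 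The remaining work is purely bookkeeping: keeping track of the several fractional powers of the different operators $-A(s)$, which is legitimate throughout by \eqref{equivnorme1} and \lemref{lemma0}.
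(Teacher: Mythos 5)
First, a point of reference: the paper itself contains no proof of this lemma --- it is imported verbatim from the companion work \cite{Antjd2} --- so your proposal can only be judged against what a complete proof must contain, and against the toolkit the paper quotes from that same source (Lemmas \ref{lemma0}, \ref{lemma0a}, \ref{evolutionlemma}, \ref{smoothing1}). Your skeleton (frozen-coefficient building block plus the parametrix representation \eqref{ref6} and its continuous counterpart) is a sensible start, but the proposal has a genuine gap precisely at the term you dismiss as routine: the consistency term $\Xi_2$. Telescoping $P_hR_m-R_m^hP_h$ does \emph{not} ``bring back only frozen error operators''. Already for $m=1$ you face $P_h(A(\tau)-A(\sigma))S_\tau(\sigma-\tau)-(A_h(\tau)-A_h(\sigma))S^h_\tau(\sigma-\tau)P_h$, and here $P_hA(t)\neq A_h(t)P_h$: by \eqref{discreteoper} one only has $A_h(t)R_h(t)=P_hA(t)$ with $R_h(t)$ the Ritz projection associated with $a(t)$, not with $P_h$. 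Consequently every splitting of this difference places either the discrete difference $A_h(\tau)-A_h(\sigma)$ (whose $L(V_h)$-norm behaves like $|\tau-\sigma|h^{-2}$, and which \lemref{lemma0a} can only tame at the price of a factor $(-A_h)$) or the elliptic consistency defect $P_hA(t)-A_h(t)P_h$ (which has no $h$-gain in $L^2$ without a duality argument) directly onto the frozen error operator. Your building block says nothing about quantities such as $\Vert(-A_h(\tau))(P_hS_\tau(s)-S^h_\tau(s)P_h)\Vert_{L(H)}$; and even if one grants the natural candidate bound $Ch^2s^{-2}$ for that quantity, the resulting integrand in $\Xi_2$ behaves like $h^2(\sigma-\tau)^{-1}$, whose integral over $(\tau,t)$ diverges logarithmically at $\sigma=\tau$. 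So this step --- which any proof of the nonsmooth-data case $\gamma<r$ must confront, and which is exactly where the classical time-dependent FEM analyses (Huang--Thom\'ee, Sammon) invest their effort via weighted norms or integration by parts in the Duhamel formula --- is not bookkeeping; as described, it fails.

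The second gap is the $r=2$ endpoint, which you correctly flag but do not resolve. The bound $\Vert(-A(\sigma))^{\gamma'/2}R(\sigma,\tau)v\Vert\leq C(\sigma-\tau)^{-\gamma'/2}\Vert v\Vert$ cannot be obtained by the interpolation you invoke: the moment inequality $\Vert(-A(\sigma))^{\theta}w\Vert\leq C\Vert w\Vert^{1-\theta}\Vert A(\sigma)w\Vert^{\theta}$ requires $w=R_1(\sigma,\mu)v\in\mathcal{D}(A(\sigma))$, which generically fails because $(A(\mu)-A(\sigma))u$ does not satisfy the boundary conditions even for smooth $u$; and interpolating the operator family $(-A(\sigma))^{z}(A(\mu)-A(\sigma))(-A(\mu))^{-1}$ between $z=0$ and $z=1$ fails because the endpoint $z=1$ is unbounded. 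What is actually needed is the Balakrishnan/resolvent representation of the fractional power combined with resolvent-commutator estimates in the style of Sobolevskii--Tanabe (or Acquistapace--Terreni), i.e.\ a substantive additional lemma of the same nature as \lemref{smoothing1}(i) but with the power on the left. In summary: the reduction to a uniform frozen estimate and the decomposition through \eqref{ref6}--\eqref{ref7} are a reasonable frame, and your $E_0$ and $\Xi_1$ estimates for $r<2$ are fine, but the two places where the real mathematical content of the lemma lives ($\Xi_2$, and the left-smoothing of $R$ at $r=2$) are left unproved, so the proposal does not yet amount to a proof.
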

\begin{proposition}\textbf{[Space error]}
\label{proposition2}
Let Assumptions  \ref{assumption1}, \ref{assumption2}, \ref{assumption3} and \ref{assumption4} be fulfilled. Let $X(t)$ and $X^h(t)$ be the mild solution of \eqref{model1} and \eqref{semi1} respectively.
\begin{itemize}
\item[(i)] If $0<\beta<1$, 
then the following error estimate holds 
\begin{eqnarray}
\label{time1}
\Vert X(t)-X^h(t)\Vert_{L^2(\Omega,H)}\leq Ch^{\beta},\quad 0\leq t\leq T.
\end{eqnarray}
\item[(ii)] If $1\leq \beta<2$ and moreover if  \assref{assumption5} is fulfilled, then the following error estimate holds
\begin{eqnarray}
\label{time3}
\Vert X(t)-X^h(t)\Vert_{L^2(\Omega,H)}\leq Ch^{\beta},\quad 0\leq t\leq T,
\end{eqnarray}
\item[(iii)] If $\beta=2$ and moreover if  \assref{assumption5} is fulfilled, then the following error estimate holds
\begin{eqnarray}
\label{time4}
\Vert X(t)-X^h(t)\Vert_{L^2(\Omega,H)}\leq Ch^{2}\left(1+\max\left(0,\ln(t/h^2)\right)\right),\, 0< t\leq T.
\end{eqnarray}
\end{itemize}
\end{proposition}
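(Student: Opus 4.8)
The plan is to subtract the two mild representations \eqref{mild1} and \eqref{mild4} and to isolate in every term a \emph{pure spatial error}, governed by the operator $T_h(t,s)=U(t,s)-U_h(t,s)P_h$ of \lemref{spaceerrorlemma}, from a \emph{Lipschitz feedback} term that a Gronwall argument will absorb. Setting $e(t):=\Vert X(t)-X^h(t)\Vert_{L^2(\Omega,H)}$ and using $X^h(0)=P_hX_0$, I would write
\begin{eqnarray*}
X(t)-X^h(t)&=&T_h(t,0)X_0+\int_0^tT_h(t,s)F\left(s,X(s)\right)ds+\int_0^tT_h(t,s)B\left(s,X(s)\right)dW(s)\\
&&+\int_0^tU_h(t,s)P_h\left(F(s,X(s))-F(s,X^h(s))\right)ds\\
&&+\int_0^tU_h(t,s)P_h\left(B(s,X(s))-B(s,X^h(s))\right)dW(s).
\end{eqnarray*}
The two feedback integrals are controlled with the uniform bound $\Vert U_h(t,s)P_h\Vert_{L(H)}\le C$ from \lemref{evolutionlemma}, the It\^o isometry \eqref{ito}, and the Lipschitz properties in \assref{assumption3} and \assref{assumption4}; after squaring and applying the Cauchy--Schwarz inequality they are dominated by $C\int_0^te(s)^2ds$. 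Hence, once the three $T_h$-terms are shown to be of the announced order $g(t,h)$, the inequality $e(t)^2\le Cg(t,h)^2+C\int_0^te(s)^2ds$ together with the continuous Gronwall lemma yields $e(t)\le Cg(t,h)$, which is the assertion.

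It remains to estimate the three $T_h$-terms via \lemref{spaceerrorlemma}, using the a priori bounds of \thmref{theorem1} and \lemref{regularitylemma}. For the initial-data term I would take $r=\gamma=\beta$, so that there is no time singularity and $\Vert T_h(t,0)X_0\Vert_{L^2(\Omega,H)}\le Ch^{\beta}\Vert X_0\Vert_{L^2(\Omega,\mathcal{D}((-A(0))^{\beta/2}))}\le Ch^{\beta}$ by \assref{assumption1}, for every $\beta\in(0,2]$. For the drift term I would use $r=\beta$, $\gamma=0$ together with the linear growth of $F$, giving $\int_0^t\Vert T_h(t,s)F(s,X(s))\Vert\,ds\le Ch^{\beta}\int_0^t(t-s)^{-\beta/2}ds\le Ch^{\beta}$ whenever $\beta<2$. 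For the diffusion term in case (i) I would again choose $r=\beta$, $\gamma=0$ and apply the It\^o isometry, so that $\int_0^t\Vert T_h(t,s)B(s,X(s))\Vert_{L^0_2}^2ds\le Ch^{2\beta}\int_0^t(t-s)^{-\beta}ds\le Ch^{2\beta}$, the integral converging precisely because $\beta<1$.

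The delicate point, and the main obstacle, is the stochastic convolution $\int_0^tT_h(t,s)B(s,X(s))dW(s)$ when $\beta\ge 1$, where the naive choice $r=\beta$, $\gamma=\beta-1$ produces after the It\^o isometry the borderline, non-integrable weight $(t-s)^{-1}$. Here I would invoke \assref{assumption5} to gain the regularity $\Vert(-A(0))^{(\beta-1)/2}B(s,X(s))\Vert_{L^0_2}\le C(1+\Vert X(s)\Vert_{\beta-1})\le C$ and then split the time integral at $s=t-h^2$. On the far part $[0,t-h^2]$ I would use \lemref{spaceerrorlemma} with the maximal order $r=2$ and $\gamma=\beta-1$, whence $\int_0^{t-h^2}\Vert T_h(t,s)B\Vert_{L^0_2}^2ds\le Ch^4\int_{h^2}^t\sigma^{-(3-\beta)}d\sigma$; on the near part $[t-h^2,t]$ I would use $r=\gamma=\beta-1$ (no time singularity), giving a contribution bounded by $Ch^{2(\beta-1)}\cdot h^2=Ch^{2\beta}$. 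For $1\le\beta<2$ the far integral evaluates to $C(2-\beta)^{-1}h^{-2(2-\beta)}$, so the far part is $C(2-\beta)^{-1}h^{2\beta}$ and the whole diffusion term is of clean order $h^{\beta}$; for $\beta=2$ the exponent degenerates, $\int_{h^2}^t\sigma^{-1}d\sigma=\ln(t/h^2)$, and the far part becomes $Ch^4\ln(t/h^2)$, which is exactly the source of the logarithmic factor in \eqref{time4}. The same dichotomy affects the drift term at $\beta=2$: splitting likewise at $s=t-h^2$ with $r=2,\gamma=0$ on the far part reproduces the factor $1+\max(0,\ln(t/h^2))$, while for $\beta<2$ no splitting is needed. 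Collecting these bounds gives the forcing $g(t,h)$ equal to $h^{\beta}$ in cases (i)--(ii) and to $h^2(1+\max(0,\ln(t/h^2)))$ in case (iii), and the Gronwall step above then delivers \eqref{time1}, \eqref{time3} and \eqref{time4}.
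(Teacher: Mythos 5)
Your proposal is correct, and its skeleton --- subtracting the two mild representations, separating pure $T_h(t,s)=U(t,s)-U_h(t,s)P_h$ terms from Lipschitz feedback terms, estimating the former by \lemref{spaceerrorlemma} and closing with the continuous Gronwall lemma --- is exactly the paper's. (The paper applies $T_h$ to $F(s,X^h(s))$ and $B(s,X^h(s))$ while you apply it to $F(s,X(s))$ and $B(s,X(s))$; both work, using \lemref{regularitylemma} respectively \thmref{theorem1} for the needed a priori regularity.) The genuine difference is your treatment of the stochastic convolution for $\beta\ge 1$ and of the drift term at $\beta=2$, and there your argument is the more complete one. The paper estimates the diffusion term in one stroke, quoting \lemref{spaceerrorlemma} with $r=\beta$, $\gamma=\tfrac{\beta-1}{2}$ and writing the squared integrand as $h^{2\beta}(t-s)^{-1+\beta}$; but under \assref{assumption5} the regularity actually available is $B$ valued in $V_{\beta-1}$, for which the lemma gives the borderline weight $(t-s)^{-1}$ --- precisely the obstacle you identify --- whereas the exponent $-1+\beta$ would require $B$ valued in $V_{2\beta-1}$, which \assref{assumption5} does not provide. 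Similarly, the paper's drift bound $h^{\beta}\int_0^t(t-s)^{-\beta/2}ds$ in \eqref{estiI2} diverges at $\beta=2$, and the displayed proof ends with $Ch^{\beta}$ in all cases, never actually producing the logarithmic factor asserted in part (iii). Your splitting of the time integral at $s=t-h^2$, with maximal order $r=2$ and $\gamma=\beta-1$ (resp.\ $\gamma=0$) on the far part and the singularity-free choice $r=\gamma$ on the near part, is the standard device (cf.\ \cite{Kruse1,Antonio1}) that repairs both defects: it yields the clean $h^{\beta}$ for $1\le\beta<2$, with the constant $(2-\beta)^{-1}$ explaining why $\beta=2$ is borderline, and for $\beta=2$ it produces exactly the factor $h^2\left(1+\max\left(0,\ln(t/h^2)\right)\right)$ of \eqref{time4}. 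In short, your route is not merely valid; it supplies the argument that the paper's proof of parts (ii)--(iii) implicitly relies on but does not carry out.
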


\begin{proof} 
Subtracting \eqref{mild4} form \eqref{mild1},  taking the $L^2$ norm  and using triangle inequality yields
{\small
\begin{eqnarray}
\label{estiI}
\Vert X(t)-X^h(t)\Vert_{L^2(\Omega,H)}&\leq& \left\Vert U(t,0)X_0-U_h(t,0)P_hX_0\right\Vert_{L^2(\Omega,H)}\nonumber\\
&+&\left\Vert\int_0^{t}\left[U(t,s)F\left(s,X(s)\right)-U_h(t,s)P_hF\left(s,X^h(s)\right)\right]
 ds\right\Vert_{L^2(\Omega,H)}\nonumber\\
 &+&\left\Vert\int_0^{t}\left[U(t,s)B\left(s,X(s)\right)-U_h(t,s)P_hB\left(s,X^h(s)\right)\right]dW(s)\right\Vert_{L^2(\Omega,H)}\nonumber\\
 & =:&IV_0+IV_1+IV_2.
\end{eqnarray}
}
Using \lemref{spaceerrorlemma} with $r=\gamma=\beta$  yields
\begin{eqnarray}
\label{jour1}
IV_0\leq Ch^{\beta}\Vert X_0\Vert_{L^2\left(\Omega, \mathcal{D}\left(\left(-A(0)\right)^{\beta/2}\right)\right)}\leq Ch^{\beta}.
\end{eqnarray}
Using \lemref{spaceerrorlemma} with $r=\beta$, $\gamma=0$, \assref{assumption3},  Lemmas \ref{regularitylemma} and \ref{evolutionlemma} yields
\begin{eqnarray}
\label{estiI2}
IV_1&\leq& \int_0^t\left\Vert U(t,s)F\left(s,X(s)\right)-U(t,s)F\left(s,X^h(s)\right)\right\Vert_{L^2(\Omega,H)}ds\nonumber\\
&+&\int_0^t\left\Vert U(t,s)F\left(s,X^h(s)\right)-U_h(t,s)P_hF\left(s,X^h(s)\right)\right\Vert_{L^2(\Omega,H)}ds\nonumber\\
&\leq& C\int_0^t\left\Vert X(s)-X^h(s)\right\Vert_{L^2(\Omega,H)}ds+Ch^{\beta}\int_0^t(t-s)^{-\beta/2}ds\nonumber\\
&\leq& Ch^{\beta}+C\int_0^{t}\left\Vert X(s)-X^h(s)\right\Vert_{L^2(\Omega,H)}ds.
\end{eqnarray}
Using the It\^{o}-isometry property, \lemref{regularitylemma}, \lemref{spaceerrorlemma} with $r=\beta$ and $\gamma=\frac{\beta-1}{2}$ yields
\begin{eqnarray}
\label{estiI3}
IV_2^2&=& \int_0^t\left\Vert U(t,s)B\left(s,X(s)\right)-U_h(t,s)P_hB\left(s,X^h(s)\right)\right\Vert^2_{L^0_2}ds\nonumber\\
&\leq& \int_0^t\left\Vert U(t,s)B\left(s,X(s)\right)-U(t,s)B\left(s,X^h(s)\right)\right\Vert^2_{L^0_2}ds\nonumber\\
&+&\int_0^t\left\Vert U(t,s)B\left(s,X^h(s)\right)-U_h(t,s)P_hB\left(s,X^h(s)\right)\right\Vert_{L^0_2}ds\nonumber\\
&\leq& C\int_0^t\left\Vert X(s)-X^h(s)\right\Vert^2_{L^2(\Omega,H)}ds+Ch^{2\beta}\int_0^t(t-s)^{-1+\beta}ds\nonumber\\
&\leq& Ch^{2\beta}+C\int_0^{t}\left\Vert X(s)-X^h(s)\right\Vert_{L^2(\Omega,H)}ds.
\end{eqnarray}
Substituting \eqref{estiI3}, \eqref{estiI2} and \eqref{jour1} in \eqref{estiI} yields
\begin{eqnarray}
\label{lat1}
\left\Vert X(t)-X^h(t)\right\Vert^2_{L^2(\Omega,H)}&\leq& Ch^{2\beta}+C\int_0^{t}\left\Vert X(s)-X^h(s)\right\Vert^2_{L^2(\Omega,H)}ds.
\end{eqnarray}
Applying the continuous Gronwall's lemma to \eqref{lat1} yields 
\begin{eqnarray}
\label{esti1}
\left\Vert X(t)-X^h(t)\right\Vert_{L^2(\Omega,H)}\leq Ch^{\beta}.
\end{eqnarray}
\end{proof}

For non commutative operators $H_j$ on a Banach space, we introduce the following notation for the composition
\begin{eqnarray}
\prod_{j=l}^kH_j=\left\{\begin{array}{ll}
H_kH_{k-1}\cdots H_l\quad \text{if} \quad k\geq l,\\
\mathbf{I}\quad \hspace{2cm} \text{if} \quad k<l.
\end{array}
\right.
\end{eqnarray}
The following lemma will be useful in our convergence proof.
\begin{lemma} \cite{Antjd2}
\label{lemma2}
Let  \assref{assumption2} be fulfilled. Then the following estimate holds
{\small
\begin{eqnarray}
\label{comp1}
\left\Vert\left(\prod_{j=l}^me^{\Delta tA_{h,j}}\right)(-A_{h,l})^{\gamma}\right\Vert_{L(H)}&\leq& Ct_{m-l}^{-\gamma}, \quad 0\leq l< m,\quad 0\leq \gamma<1,\\
\label{comp1a}
\left\Vert(-A_{h,k})^{\gamma_1}\left(\prod_{j=l}^me^{\Delta tA_{h,j}}\right)(-A_{h,l})^{-\gamma_2}\right\Vert_{L(H)}&\leq& Ct_{m-l}^{\gamma_2-\gamma_1}, \quad 0\leq l< m,
\end{eqnarray}
}
$0\leq \gamma_1\leq 1$, $ 0<\gamma_2\leq 1$, where $C$ is a positive constant independent of $m$, $l$, $h$ and $\Delta t$.
\end{lemma}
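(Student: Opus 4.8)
The plan is to treat the product $\mathcal{U}_{m,l}:=\prod_{j=l}^{m}e^{\Dt A_{h,j}}$ as a \emph{discrete evolution operator} and to transfer to it, one estimate at a time, the bounds already proved for the continuous evolution system $U_h$ in \lemref{evolutionlemma}. The bridge is the Pazy representation \eqref{ref6} of Remark~\ref{evolutionremark}: restricted to a single subinterval it gives the local identity
\[
e^{\Dt A_{h,k}}-U_h(t_{k+1},t_k)=-\int_{t_k}^{t_{k+1}}S^h_{\tau}(t_{k+1}-\tau)R^h(\tau,t_k)\,d\tau,
\]
so that, by the uniform bounds $\|S^h_\tau(\cdot)\|_{L(H)}\le C$ and $\|R^h\|_{L(H)}\le C$ from \eqref{reste2}, the per-step consistency error satisfies $\|e^{\Dt A_{h,k}}-U_h(t_{k+1},t_k)\|_{L(H)}\le C\Dt$.

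\emph{Step 1 (uniform boundedness).} First I would prove $\|\mathcal{U}_{m,l}\|_{L(H)}\le C$ uniformly in $m,l,h,\Dt$. Telescoping the discrete flow against the continuous one yields
\[
\mathcal{U}_{m,l}-U_h(t_{m+1},t_l)=\sum_{k=l}^{m}\mathcal{U}_{m,k+1}\bigl[e^{\Dt A_{h,k}}-U_h(t_{k+1},t_k)\bigr]U_h(t_k,t_l).
\]
Bounding each bracket by $C\Dt$, using $\|U_h\|_{L(H)}\le C$ and setting $E_{m,l}:=\|\mathcal{U}_{m,l}\|_{L(H)}$, one gets $E_{m,l}\le C+C\Dt\sum_{k=l}^{m}E_{m,k+1}$, and a discrete Gronwall inequality in the index closes the bound as $E_{m,l}\le Ce^{CT}$.

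\emph{Step 2 (one-sided smoothing, \eqref{comp1}).} I would then split $\mathcal{U}_{m,l}(-A_{h,l})^{\gamma}$ into the leading term $U_h(t_{m+1},t_l)(-A_{h,l})^{\gamma}$, whose norm is at most $C(t_{m+1}-t_l)^{-\gamma}\le Ct_{m-l}^{-\gamma}$ by \eqref{ae3}, plus the telescoped remainder from Step~1 now carrying $(-A_{h,l})^{\gamma}$ on the right. For $k\ge l+1$ the factor $U_h(t_k,t_l)(-A_{h,l})^{\gamma}$ is controlled by \eqref{ae3} as $C\,t_{k-l}^{-\gamma}$, and, together with Step~1 and the $O(\Dt)$ local error, the remainder is dominated by $C\Dt\sum_{j\ge1}t_{j}^{-\gamma}\sim Ct_{m-l}^{1-\gamma}\le Ct_{m-l}^{-\gamma}$ (since $t_{m-l}\le T$ and $\gamma<1$). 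The diagonal term $k=l$ is treated separately through the bound $\|R^h(\tau,t_l)(-A_{h,l})^{\gamma}\|_{L(H)}\le C(\tau-t_l)^{-\gamma}$, obtained from $R^h=R_1^h+\cdots$, \eqref{smooth2} and the relative estimates of \lemref{lemma0a}; it integrates to $C\Dt^{1-\gamma}\le Ct_{m-l}^{-\gamma}$. The same argument with the fractional power on the left (now invoking \eqref{ae1}) gives the left-sided smoothing $\|(-A_{h,r})^{\gamma}\mathcal{U}_{m,l}\|_{L(H)}\le Ct_{m-l}^{-\gamma}$, which I will need below.

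\emph{Step 3 (two-sided estimate, \eqref{comp1a}) and the main difficulty.} For the last estimate I would use \lemref{lemma0} to match the base index of $(-A_{h,k})^{\gamma_1}$ wherever needed, then split once more against $U_h(t_{m+1},t_l)$. The leading term is handled directly by \eqref{ae2}, giving $C(t_{m+1}-t_l)^{\gamma_2-\gamma_1}\le Ct_{m-l}^{\gamma_2-\gamma_1}$ in the genuinely singular regime $\gamma_1\ge\gamma_2$; the remainder is estimated by combining the left-sided smoothing of Step~2 for $(-A_{h,k})^{\gamma_1}\mathcal{U}_{m,k+1}$ with the regularization bound \eqref{hen1} for $U_h(t_k,t_l)(-A_{h,l})^{-\gamma_2}$, which produces a discrete convolution $C\Dt\sum_{k}t_{m-k}^{-\gamma_1}t_{k-l}^{\gamma_2}\sim Ct_{m-l}^{1-\gamma_1+\gamma_2}\le Ct_{m-l}^{\gamma_2-\gamma_1}$. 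The hard part throughout is keeping every bound uniform in $\Dt$ and in the number of factors $m-l$: the accumulation of the $O(\Dt)$ local errors must be absorbed by a discrete Gronwall with a $\Dt$-independent constant, and the diagonal and endpoint terms — where a smoothing singularity collides with the differentiation at the base point — must be tamed. This is exactly where the cancellation of the time-Lipschitz factor against the smoothing singularity (encoded in $\|R^h\|_{L(H)}\le C$ and in the relative bounds of \lemref{lemma0a}) is indispensable; without it the correction sums would carry an uncontrolled factor $\Dt^{-1}$.
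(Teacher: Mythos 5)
First, a point of reference: the paper never proves this lemma---it is imported verbatim from the companion paper \cite{Antjd2}---so your proposal has to be judged on its own merits rather than against an internal argument. Your core mechanism (telescoping $\mathcal{U}_{m,l}:=\prod_{j=l}^{m}e^{\Dt A_{h,j}}$ against the evolution system $U_h$, with the per-step consistency error $O(\Dt)$ supplied by \eqref{ref6}, \eqref{reste2}, and the smoothing estimates of \lemref{evolutionlemma}) is sound, and Steps 1 and 2 do deliver the uniform bound and \eqref{comp1}; note that \eqref{comp1} is claimed only for $0\le\gamma<1$, and your argument genuinely needs $\gamma<1$ to sum $\Dt\sum_{j}t_j^{-\gamma}\le Ct_{m-l}^{1-\gamma}$. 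One repair is needed even there: the left-sided bound $\Vert(-A_{h,r})^{\gamma}\mathcal{U}_{m,l}\Vert_{L(H)}\le Ct_{m-l}^{-\gamma}$ is \emph{not} ``the same argument'', because in your telescoping the correction terms contain the unknown quantity $(-A_{h,r})^{\gamma}\mathcal{U}_{m,k+1}$ itself; you must instead use the mirrored identity
\[
\mathcal{U}_{m,l}-U_h(t_{m+1},t_l)=\sum_{k=l}^{m}U_h(t_{m+1},t_{k+1})\left(e^{\Dt A_{h,k}}-U_h(t_{k+1},t_k)\right)\mathcal{U}_{k-1,l},
\]
which places $U_h$ (hence \eqref{ae1}) on the left and the already-bounded discrete products on the right. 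That much is fixable and I regard it as expository.

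The genuine gap is the endpoint $\gamma_1=1$ in \eqref{comp1a}, which the statement explicitly allows ($0\le\gamma_1\le1$), and there both ingredients of your Step 3 fail. (a) The input $\Vert(-A_{h,k})^{\gamma_1}\mathcal{U}_{m,k'+1}\Vert_{L(H)}\le Ct_{m-k'}^{-\gamma_1}$ is precisely the left-sided analogue of \eqref{comp1} and is only available for $\gamma_1<1$. (b) Even granting it, your discrete convolution becomes, at $\gamma_1=1$, $\Dt\sum_{k'}t_{m-k'}^{-1}t_{k'-l}^{\gamma_2}\gtrsim t_{m-l}^{\gamma_2}\ln(m-l)$, and the target inequality $t_{m-l}^{\gamma_2}\ln(m-l)\le Ct_{m-l}^{\gamma_2-1}$ would force $C\ge t_{m-l}\ln(m-l)$, which is of size $T\ln(T/\Dt)$: no constant uniform in $\Dt$ exists. (This is exactly why \lemref{Stiglemma} requires its exponents to stay strictly above $-1$.) So per-step-$O(\Dt)$ telescoping cannot close \eqref{comp1a} at $\gamma_1=1$; the endpoint requires a different device, for instance commuting the full power through the product one factor at a time via the exact identity $(-A_{h,m})\mathcal{U}_{m,l}(-A_{h,l})^{-1}=\left(\prod_{j=l+1}^{m}e^{\Dt A_{h,j}}A_{h,j}A_{h,j-1}^{-1}\right)e^{\Dt A_{h,l}}$, in which every factor has norm at most $1+C\Dt$ by \lemref{lemma0a} combined with the contractivity of $e^{\Dt A_{h,j}}$ (available here because $-A_h(t)$ is coercive, cf. \eqref{ellip2}), so that the whole product is bounded by $(1+C\Dt)^{m-l}\le e^{CT}$; the mixed cases $\gamma_1=1$, $0<\gamma_2<1$ additionally need a perturbation estimate for fractional powers of the form $\Vert(-A_{h,j})^{\gamma}(-A_{h,j-1})^{-\gamma}\Vert_{L(H)}\le1+C\Dt$. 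As written, your proposal establishes \eqref{comp1} and the restriction of \eqref{comp1a} to $\gamma_1<1$, but not the lemma in its stated generality.
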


\begin{lemma}
\label{smoothing1}
\begin{itemize}
\item[(i)] For all $\alpha\geq0$, the following estimate holds
\begin{eqnarray}
\left\Vert R^h(t,s)(-A_h(s))^{\alpha}\right\Vert_{L(H)}\leq C(t-s)^{-\alpha}, \quad t,s\in[0, T].
\end{eqnarray}
\item[(ii)] For all $\alpha\in[0,1]$, the following estimate holds
\begin{eqnarray}
\left\Vert\left(U_h(t_j, t_{j-1})-e^{\Delta tA_{h,j-1}}\right)\left(-A_{h,j-1}\right)^{-\alpha}\right\Vert_{L(H)}\leq C\Delta t^{1+\alpha}.
\end{eqnarray}
\item[(iii)] For all $\alpha\in[0,1)$, the following estimate holds
\begin{eqnarray}
\left\Vert\left(U_h(t_j, t_{j-1})-e^{\Delta tA_{h,j-1}}\right)\left(-A_{h,j-1}\right)^{\alpha}\right\Vert_{L(H)}\leq C\Delta t^{1-\alpha}.
\end{eqnarray}
\item[(iv)] For all $\alpha\in[0,1]$, the following estimate holds
\begin{eqnarray}
\left\Vert (-A_{h,j-1})^{-\alpha}\left(U_h(t_j,t_{j-1})-e^{\Delta tA_{h,j-1}}\right)\right\Vert_{L(H)}\leq C\Delta t^{1+\alpha}.
\end{eqnarray}
\end{itemize}
\end{lemma}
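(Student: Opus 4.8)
The plan is to base every estimate on two integral representations of the one-step error $E_j:=U_h(t_j,t_{j-1})-e^{\Delta tA_{h,j-1}}$. From the Duhamel identity \eqref{ref6} with $s=t_{j-1}$, $t=t_j$ one obtains the forward representation
\begin{eqnarray*}
E_j=\int_{t_{j-1}}^{t_j}e^{(t_j-\tau)A_h(\tau)}R^h(\tau,t_{j-1})\,d\tau,
\end{eqnarray*}
whereas differentiating $\tau\mapsto U_h(t_j,\tau)e^{(\tau-t_{j-1})A_{h,j-1}}$ and using \lemref{pazylemma} (ii) yields the companion representation
\begin{eqnarray*}
E_j=\int_{t_{j-1}}^{t_j}U_h(t_j,\tau)\left[A_h(\tau)-A_{h,j-1}\right]e^{(\tau-t_{j-1})A_{h,j-1}}\,d\tau.
\end{eqnarray*}
The first is convenient for right multiplication (parts (ii) and (iii)), the second for left multiplication (part (iv)).

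For part (i) I first treat the leading term $R^h_1(t,s)=(A_h(s)-A_h(t))e^{(t-s)A_h(s)}$: writing $(A_h(s)-A_h(t))(-A_h(s))^{\alpha}=[(A_h(s)-A_h(t))(-A_h(s))^{-1}](-A_h(s))^{1+\alpha}$ and combining the Lipschitz bound \eqref{ref3} (factor $|t-s|$) with the analyticity bound \eqref{smooth2} (factor $(t-s)^{-(1+\alpha)}$) gives $\Vert R^h_1(t,s)(-A_h(s))^{\alpha}\Vert_{L(H)}\le C(t-s)^{-\alpha}$. I then propagate this bound to $R^h=\sum_mR^h_m$ through the Volterra relation \eqref{ref7}, transferring the fractional power across time points by the norm equivalence of \lemref{lemma0}; the resulting series has Mittag--Leffler type coefficients and converges uniformly on $[0,T]$ for $\alpha\in[0,1)$, producing the stated bound. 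Values $\alpha\ge 1$ are then reached by splitting $e^{(t-s)A_h(s)}=e^{\frac{t-s}{2}A_h(s)}e^{\frac{t-s}{2}A_h(s)}$ and iterating the one-step gain. The uniform control of this infinite series is the only delicate point here.

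Parts (ii) and (iii) then follow by feeding these smoothing bounds into the forward representation. For (iii), part (i) gives $\Vert R^h(\tau,t_{j-1})(-A_{h,j-1})^{\alpha}\Vert_{L(H)}\le C(\tau-t_{j-1})^{-\alpha}$, and since $e^{(t_j-\tau)A_h(\tau)}$ is uniformly bounded, $\Vert E_j(-A_{h,j-1})^{\alpha}\Vert_{L(H)}\le C\int_{t_{j-1}}^{t_j}(\tau-t_{j-1})^{-\alpha}\,d\tau=C\Delta t^{1-\alpha}$ for $\alpha\in[0,1)$. For (ii) the extra factor $\Delta t^{\alpha}$ comes from the negative exponent reducing the strength of the analyticity bound: exactly as for $R^h_1$ above one shows $\Vert R^h(\tau,t_{j-1})(-A_{h,j-1})^{-\alpha}\Vert_{L(H)}\le C(\tau-t_{j-1})^{\alpha}$ (the higher Volterra terms being even more regular by \eqref{reste1}), and integrating over an interval of length $\Delta t$ gives $\Vert E_j(-A_{h,j-1})^{-\alpha}\Vert_{L(H)}\le C\Delta t^{1+\alpha}$.

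Part (iv) is the genuine obstacle, since left multiplication by $(-A_{h,j-1})^{-\alpha}$ sits far from every smoothing factor in the forward representation and naively yields only the order $\Delta t$. Here I use the companion representation together with the left Lipschitz bound \eqref{ref2}, writing $A_h(\tau)-A_{h,j-1}=(-A_{h,j-1})\widehat D_\tau$ with $\Vert\widehat D_\tau\Vert_{L(H)}\le C(\tau-t_{j-1})$, so that
\begin{eqnarray*}
(-A_{h,j-1})^{-\alpha}E_j=\int_{t_{j-1}}^{t_j}(-A_{h,j-1})^{-\alpha}U_h(t_j,\tau)(-A_{h,j-1})\,\widehat D_\tau\,e^{(\tau-t_{j-1})A_{h,j-1}}\,d\tau.
\end{eqnarray*}
The crux is the refined smoothing estimate $\Vert(-A_{h,j-1})^{-\alpha}U_h(t_j,\tau)(-A_{h,j-1})\Vert_{L(H)}\le C(t_j-\tau)^{\alpha-1}$, in which the net order of the two outer powers is $1-\alpha$; this is the natural extension of \eqref{ae2}--\eqref{ae3} to a negative left and a positive right exponent, and is proved by the same method (the frozen-semigroup spectral bound for the principal part, the $R^h$-correction being controlled through \lemref{evolutionlemma}), or equivalently by duality from \eqref{ae2} applied to the adjoint family, which satisfies the same structural hypotheses. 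Granting it, the integrand is bounded by $C(t_j-\tau)^{\alpha-1}(\tau-t_{j-1})$, and the Beta-type integral $\int_{t_{j-1}}^{t_j}(t_j-\tau)^{\alpha-1}(\tau-t_{j-1})\,d\tau=C\Delta t^{1+\alpha}$ closes the estimate, the case $\alpha=0$ reducing to part (ii) with $\alpha=0$. I expect establishing this refined $U_h$ bound uniformly in $h$ to be the hardest step of the whole lemma.
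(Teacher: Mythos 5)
Your proposal is correct in substance, and for parts (i)--(iii) it is essentially the paper's own argument: the paper likewise starts from the Duhamel/Volterra identities \eqref{ref6}--\eqref{ref7}, bounds the leading term $R^h_1(t,s)(-A_h(s))^{\alpha}$ by pairing the Lipschitz bound of \lemref{lemma0a} with the analytic smoothing \eqref{smooth2}, and treats the remaining Volterra contribution as higher order; the only real difference in (i) is that the paper closes the integral inequality with the continuous Gronwall lemma where you sum the iterated series with Mittag--Leffler coefficients, and both devices require the singular term $(t-s)^{-\alpha}$ to be integrable, so the restriction to $\alpha\in[0,1)$ that you flag afflicts the paper's written proof equally (the lemma is only ever invoked with such exponents). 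Where you genuinely diverge is part (iv), which the paper dismisses with ``similar to (ii) using (i)'': that forward route, after left-multiplying $E_j=\int_{t_{j-1}}^{t_j} e^{(t_j-\tau)A_h(\tau)}R^h(\tau,t_{j-1})\,d\tau$ by $(-A_{h,j-1})^{-\alpha}$, silently needs an interpolated commutator--Lipschitz bound of the type $\Vert(-A_h(\tau))^{-\alpha}\left(A_h(\tau)-A_{h,j-1}\right)(-A_{h,j-1})^{\alpha-1}\Vert_{L(H)}\leq C(\tau-t_{j-1})$, which for $0<\alpha<1$ is not contained in \lemref{lemma0a} and requires a Heinz--Kato/Stein interpolation argument the paper never supplies. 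You instead use the backward (companion) representation and concentrate the difficulty in the refined smoothing estimate $\Vert(-A_{h,j-1})^{-\alpha}U_h(t_j,\tau)(-A_{h,j-1})\Vert_{L(H)}\leq C(t_j-\tau)^{\alpha-1}$; your duality route for it is sound, since $\tilde U_h(t,s):=U_h(T-s,T-t)^{*}$ is the evolution system of the time-reversed adjoint family $A_h(T-t)^{*}$, the structural hypotheses (uniform sectoriality, the two-sided Lipschitz conditions \eqref{conditionB}, the norm equivalences of \lemref{lemma0}) are adjoint-symmetric, and \eqref{ae2} applied to $\tilde U_h$ with exponents $(1,\alpha)$ yields exactly the claimed bound after transposing, a norm-equivalence step moving the negative power from $t_j$ to $t_{j-1}$. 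Thus both routes must import one estimate not proved in the paper: yours reduces it to an already-established evolution-family bound at the cost of re-verifying \lemref{evolutionlemma} for the adjoint family, while the paper's is shorter but hides an interpolation step; your further observations --- that $\alpha=0$ in (iv) must be recovered from (ii) because the Beta integral degenerates, and that the second Volterra term is $O(\Delta t^{2})$ --- are both correct.
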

\begin{proof}
From the integral equation \eqref{ref7}, we have
{\small
\begin{eqnarray}
\label{smoothing2}
R^h(t,s)(-A_h(s))^{\alpha}=e^{A_h(s)(t-s)}(-A_h(s))^{\alpha}+\int_s^tR^h_1(t,\tau)R^h(\tau,s)(-A_h(s))^{\alpha}d\tau.
\end{eqnarray}
}
Taking the norm in both sides of \eqref{smoothing2}, using \eqref{smooth1} and \lemref{evolutionlemma} yields
{\small
\begin{eqnarray}
\label{smoothing3}
\left\Vert R^h(t,s)(-A_h(s))^{\alpha}\right\Vert_{L(H)}&\leq&\left\Vert e^{A_h(s)(t-s)}(-A_h(s))^{\alpha}\right\Vert_{L(H)}\nonumber\\
&+&\int_s^t\Vert R^h_1(\tau,s)\Vert_{L(H)}\left\Vert R^h(\tau,s)(-A_h(s))^{\alpha}\right\Vert_{L(H)}d\tau\nonumber\\
&\leq& C(t-s)^{-\alpha}+C\int_s^t\left\Vert R^h(\tau,s)(-A_h(s))^{\alpha}\right\Vert_{L(H)}d\tau.
\end{eqnarray}
}
Applying the continuous Gronwall's lemma to \eqref{smoothing3} yields
\begin{eqnarray}
\left\Vert R^h(t,s)(-A_h(s))^{\alpha}\right\Vert_{L(H)}\leq C(t-s)^{-\alpha}.
\end{eqnarray}
This completes the proof of (i).  
From \eqref{ref6} and \eqref{ref7}, we have
\begin{eqnarray}
\label{fonda4}
U_h(t_j, t_{j-1})-e^{\Delta tA_{h,j-1}}&=&\int_{t_{j-1}}^{t_j}e^{(t_j-\tau)A_h(\tau)}R_h(\tau, t_{j-1})d\tau\nonumber\\
&=&\int_{t_{j-1}}^{t_j}e^{(t_j-\tau)A_h(\tau)}R^h_1(\tau,t_{j-1})d\tau \nonumber\\
&+&\int_{t_{j-1}}^{t_j}e^{(t_j-\tau)A_h(\tau)}\left[\int_{t_{j-1}}^{\tau}R^h_1(\tau,s)
R^h(s,t_{j-1})ds\right]d\tau\nonumber\\
&=&\int_{t_{j-1}}^{t_j}e^{(t_j-\tau)A_h(\tau)}\left(A_h(\tau)-A_h(t_{j-1})\right)e^{A_{h,j-1}(\tau-t_{j-1})}d\tau \nonumber\\
&+&\int_{t_{j-1}}^{t_j}e^{(t_j-\tau)A_h(\tau)}\left[\int_{t_{j-1}}^{\tau}R^h_1(\tau,s)
R^h(s,t_{j-1})ds\right]d\tau.
\end{eqnarray}
Therefore, from \eqref{fonda4}, for all $\alpha\in[0,1]$, using \eqref{smooth1} and \lemref{evolutionlemma}, it holds that
\begin{eqnarray}
\label{fonda5}
&&\left\Vert\left(U_h(t_j, t_{j-1})-e^{\Delta tA_{h,j-1}}\right)\left(-A_{h,j-1}\right)^{-\alpha}\right\Vert_{L(H)}\nonumber\\
&\leq& \int_{t_{j-1}}^{t_j}\left\Vert e^{(t_j-\tau)A_h(\tau)}\left(A_h(\tau)-A_h(t_{j-1})\right)\left(-A_{h,j-1}\right)^{-1}\right.\nonumber\\
&&\left..e^{A_{h,j-1}(\tau-t_{j-1})}\left(-A_{h,j-1}\right)^{1-\alpha}\right\Vert_{L(H)}d\tau\nonumber\\
&+&\int_{t_{j-1}}^{t_j}\left\Vert e^{(t_j-\tau)A_h(\tau)}\right\Vert_{L(H)}\left[\int_{t_{j-1}}^{\tau}\Vert R^h_1(\tau, s)R^h(s, t_{j-1})\Vert_{L(H)}ds\right]d\tau\nonumber\\
&\leq& \int_{t_{j-1}}^{t_j}\left\Vert e^{(t_j-\tau)A_h(\tau)}\right\Vert_{L(H)}\left\Vert\left(A_h(\tau)-A_h(t_{j-1})\right)\left(-A_{h,j-1}\right)^{-1}\right\Vert_{L(H)}\nonumber\\
&\times&\left\Vert e^{A_{h,j-1}(\tau-t_{j-1})}\left(-A_{h,j-1}\right)^{1-\alpha}\right\Vert_{L(H)}d\tau\nonumber\\
&+&C\int_{t_{j-1}}^{t_j}\int_{t_{j-1}}^{\tau}dsd\tau\nonumber\\
&\leq& C\int_{t_{j-1}}^{t_j}(\tau-t_{j-1})^{\alpha}d\tau+C\Delta t^2\leq C\Delta t^{1+\alpha}.
\end{eqnarray}
This completes the proof of (ii). The proof of (iii) and (iv) are similar to that of (ii) using (i).
\end{proof}

The following lemma can be found in \cite{Stig2}
\begin{lemma}
\label{Stiglemma}
For all $\alpha_1, \alpha_2>0$ and $\alpha\in[0,1)$, there exist two positive constants $C_{\alpha_1,\alpha_2}$ and $C_{\alpha,\alpha_2}$ such that
\begin{eqnarray}
\label{eti1}
\Delta t\sum_{j=1}^mt_{m-j+1}^{-1+\alpha_1}t_j^{-1+\alpha_2}&\leq& C_{\alpha_1,\alpha_2}t_m^{-1+\alpha_1+\alpha_2},\\
\label{eti2}
\Delta t\sum_{j=1}^mt_{m-j+1}^{-\alpha}t_j^{-1+\alpha_2}&\leq& C_{\alpha,\alpha_2}t_m^{-\alpha+\alpha_2}.
\end{eqnarray}
\end{lemma}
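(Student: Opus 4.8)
The plan is to treat both estimates as discrete analogues of the Beta-function identity $\int_0^{t_m}(t_m-s)^{-1+\alpha_1}s^{-1+\alpha_2}\,ds = B(\alpha_1,\alpha_2)\,t_m^{-1+\alpha_1+\alpha_2}$, whose convergence requires exactly $\alpha_1,\alpha_2>0$ (respectively $1-\alpha>0$ and $\alpha_2>0$). A direct Riemann-sum comparison fails because the summand is singular at both ends $j=1$ and $j=m$, so the core device will be to split the sum at the midpoint and exploit that in each half one of the two factors has an argument bounded below by a fixed multiple of $t_m$.

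First I would record the elementary one-sided estimate that for every exponent $p>-1$ there is a constant $C_p$ with $\sum_{j=1}^N j^{p}\le C_p N^{p+1}$, proved by comparison with $\int_0^N x^p\,dx$, treating the cases $p\ge0$ and $-1<p<0$ separately (the latter using that $x\mapsto x^p$ is decreasing). Rescaling by $t_j=j\Delta t$ this yields the two building blocks $\Delta t\sum_{j=1}^N t_j^{-1+\beta}\le C\,t_N^{\beta}$ for $\beta>0$ and $\Delta t\sum_{j=1}^N t_j^{-\alpha}\le C\,t_N^{1-\alpha}$ for $\alpha\in[0,1)$.

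For \eqref{eti1} I would split the sum as $\sum_{j=1}^{\lfloor m/2\rfloor}+\sum_{j=\lfloor m/2\rfloor+1}^{m}$. In the first block $m-j+1\ge m/2$, hence $t_{m-j+1}\ge t_m/2$ and (checking both signs of $-1+\alpha_1$) one gets $t_{m-j+1}^{-1+\alpha_1}\le C\,t_m^{-1+\alpha_1}$; pulling this factor out and applying the first building block to the residual $\Delta t\sum_j t_j^{-1+\alpha_2}\le C\,t_m^{\alpha_2}$ gives the claimed bound $C\,t_m^{-1+\alpha_1+\alpha_2}$. In the second block the roles reverse: there $t_j\ge t_m/2$ so $t_j^{-1+\alpha_2}\le C\,t_m^{-1+\alpha_2}$, and the change of index $k=m-j+1$ together with $\Delta t\sum_k t_k^{-1+\alpha_1}\le C\,t_m^{\alpha_1}$ again produces $C\,t_m^{-1+\alpha_1+\alpha_2}$. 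Estimate \eqref{eti2} follows by the identical splitting, using $t_{m-j+1}^{-\alpha}\le C\,t_m^{-\alpha}$ on the first block and the second building block $\Delta t\sum_k t_k^{-\alpha}\le C\,t_m^{1-\alpha}$ (valid precisely because $\alpha<1$) on the second block.

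The main obstacle is purely the endpoint behaviour: because both factors are integrably singular, neither can be bounded globally, and the whole point of the midpoint split is to guarantee that at least one argument stays comparable to $t_m$ on each half. The only mild subtlety is tracking the sign of the exponents $-1+\alpha_1$ and $-1+\alpha_2$ (the pulled-out factor is increasing or decreasing according to whether $\alpha_i\ge 1$ or $\alpha_i<1$), but in either case that factor is bounded by $C\,t_m^{(\cdot)}$, and all constants depend only on $\alpha_1,\alpha_2,\alpha$ as asserted.
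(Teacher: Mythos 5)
Your proof is correct, but it follows a genuinely different route from the paper. The paper disposes of the lemma in two lines: \eqref{eti1} is asserted to follow "by comparison with the integral" $\int_0^t(t-s)^{-1+\alpha_1}s^{-1+\alpha_2}\,ds$ (the Beta integral, which equals $B(\alpha_1,\alpha_2)\,t^{-1+\alpha_1+\alpha_2}$), and \eqref{eti2} is then declared a consequence of \eqref{eti1} --- which it literally is: since $\alpha\in[0,1)$, one may take $\alpha_1=1-\alpha\in(0,1]$ in \eqref{eti1} and obtain \eqref{eti2} with no further work, a shortcut you did not exploit (you instead re-ran the whole splitting argument for \eqref{eti2}, which is harmless but redundant). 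Your approach replaces the integral comparison by a fully discrete argument: the midpoint split, the pull-out of the factor whose argument is comparable to $t_m$ on each half (with the sign check on $-1+\alpha_i$), and the elementary power-sum bound $\sum_{j=1}^N j^p\le C_pN^{p+1}$ for $p>-1$. What this buys is rigor precisely where the paper is glossing: a term-by-term Riemann-sum comparison is delicate because the summand is singular at both ends $j=1$ and $j=m$, and those endpoint terms must be handled separately (they are fine, e.g. the $j=m$ term contributes $\Delta t^{\alpha_1}t_m^{-1+\alpha_2}\le t_m^{-1+\alpha_1+\alpha_2}$, but the paper never says so). What the paper's route buys is brevity and the identification of the limiting constant as the Beta function, plus the one-line deduction of \eqref{eti2}. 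Both arguments yield constants depending only on the exponents, as the statement requires.
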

\begin{proof}
The proof of \eqref{eti1} follows from the comparison with the integral
\begin{eqnarray}
\int_0^t(t-s)^{-1+\alpha_1}s^{-1+\alpha_2}ds.
\end{eqnarray}
The proof of \eqref{eti2} is a consequence of \eqref{eti1}.
\end{proof}

The following lemma is fundamental in our convergence analysis.
\begin{lemma}
\label{fonda}
Let \assref{assumption2}  be fulfilled. Then for all $1\leq i\leq m\leq M$.
\begin{itemize}
\item[(i)] The following estimate holds
\begin{eqnarray}
\label{fonda0}
\left\Vert\left(\prod_{j=i}^mU_h(t_j,t_{j-1})\right)-\left(\prod_{j=i-1}^{m-1}
e^{\Delta tA_{h,j}}\right)\right\Vert_{L(H)}\leq C\Delta t^{1-\epsilon},
\end{eqnarray}
where $\epsilon>0$ is a positive number small enough.
\item[(ii)]
The following estimate also holds
{\small
\begin{eqnarray}
\label{fonda00}
\left\Vert\left[\left(\prod_{j=i}^mU_h(t_j,t_{j-1})\right)-\left(\prod_{j=i-1}^{m-1}
e^{\Delta tA_{h,j}}\right)\right](-A_{h,i-1})^{-\epsilon}\right\Vert_{L(H)}\leq C\Delta t.
\end{eqnarray}
}
\end{itemize}
\end{lemma}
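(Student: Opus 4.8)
The plan is to expand the difference as a telescoping sum (a discrete Duhamel formula) and to estimate each summand by inserting fractional powers of $-A_{h,k-1}$, extracting the available smoothing from a \emph{different} factor in each of the two cases. Writing $A_k:=U_h(t_k,t_{k-1})$, $B_k:=e^{\Delta t A_{h,k-1}}$ and $E_k:=A_k-B_k$, the composition property of the evolution system gives $\prod_{l=k+1}^m A_l=U_h(t_m,t_k)$ and $\prod_{l=i}^{k-1}B_l=\prod_{j=i-1}^{k-2}e^{\Delta t A_{h,j}}$, so the standard telescoping identity yields
\begin{eqnarray*}
\left(\prod_{j=i}^m U_h(t_j,t_{j-1})\right)-\left(\prod_{j=i-1}^{m-1}e^{\Delta t A_{h,j}}\right)=\sum_{k=i}^m U_h(t_m,t_k)\,E_k\left(\prod_{j=i-1}^{k-2}e^{\Delta t A_{h,j}}\right).
\end{eqnarray*}
The whole proof reduces to summing the norms of these terms, and the endpoints $k=i$ (empty frozen product) and $k=m$ (left factor $=\mathbf I$) will be treated separately by crude estimates.

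For (i) I would borrow the singularity from the evolution factor on the left. Inserting $(-A_{h,k-1})^{1-\epsilon}(-A_{h,k-1})^{-(1-\epsilon)}$ between $U_h(t_m,t_k)$ and $E_k$, I bound the three factors by \lemref{evolutionlemma} (giving $\Vert U_h(t_m,t_k)(-A_{h,k-1})^{1-\epsilon}\Vert_{L(H)}\leq C t_{m-k}^{-(1-\epsilon)}$ for $k<m$), by \lemref{smoothing1}(iv) with $\alpha=1-\epsilon$ (giving $\Vert(-A_{h,k-1})^{-(1-\epsilon)}E_k\Vert_{L(H)}\leq C\Delta t^{2-\epsilon}$), and by \lemref{lemma2} with $\gamma=0$ (uniform boundedness of the frozen product). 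Summation over the interior then gives $C\Delta t^{2-\epsilon}\sum_{k=i}^{m-1}t_{m-k}^{-(1-\epsilon)}\leq C\Delta t^{2-\epsilon}\Delta t^{-1}t_{m-i}^{\epsilon}\leq C\Delta t^{1-\epsilon}$, while the term $k=m$ is $\Vert E_m\Vert_{L(H)}\leq C\Delta t$ by \lemref{smoothing1}(ii) with $\alpha=0$. This proves \eqref{fonda0}.

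For (ii) the extra factor $(-A_{h,i-1})^{-\epsilon}$ is the crucial ingredient, and the key point is that it must be fused with the frozen product rather than discarded as a bounded factor. Here I would instead load the \emph{full} power onto $E_k$: inserting $(-A_{h,k-1})^{-1}(-A_{h,k-1})$ between $E_k$ and the product, I use $\Vert U_h(t_m,t_k)\Vert_{L(H)}\leq C$, then $\Vert E_k(-A_{h,k-1})^{-1}\Vert_{L(H)}\leq C\Delta t^{2}$ from \lemref{smoothing1}(ii) with $\alpha=1$, and finally \lemref{lemma2} in the form \eqref{comp1a} with $\gamma_1=1$, $\gamma_2=\epsilon$ (adjusting the time index of the integer power by \lemref{lemma0a}) to get $\Vert(-A_{h,k-1})(\prod_{j=i-1}^{k-2}e^{\Delta t A_{h,j}})(-A_{h,i-1})^{-\epsilon}\Vert_{L(H)}\leq C t_{k-i-1}^{\epsilon-1}$. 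Each interior summand is then $\leq C\Delta t^{2}t_{k-i}^{-(1-\epsilon)}$, and $\sum_{k>i}t_{k-i}^{-(1-\epsilon)}\leq C\Delta t^{-1}t_{m-i}^{\epsilon}$ produces $C\Delta t^{2}\Delta t^{-1}T^{\epsilon}\leq C\Delta t$; the term $k=i$ is handled separately by $\Vert E_i(-A_{h,i-1})^{-\epsilon}\Vert_{L(H)}\leq C\Delta t^{1+\epsilon}$, giving \eqref{fonda00}.

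The hard part is the correct placement of the fractional powers: in (i) the summable kernel $t_{m-k}^{-(1-\epsilon)}$ is taken from the evolution operator while the frozen product stays bounded, whereas in (ii) the kernel $t_{k-i}^{-(1-\epsilon)}$ must instead be \emph{generated} from the frozen product by absorbing the extra $\epsilon$-smoothing through \eqref{comp1a}, the decisive gain of a full power $\Delta t$ coming from taking $\alpha=1$ in \lemref{smoothing1}(ii). A secondary technicality is the endpoint bookkeeping (the terms $k=i$ and $k=m$, and the degeneration of \eqref{comp1a} when the product has a single factor, where one falls back on \eqref{smooth2} and \lemref{lemma0}), together with the evaluation of the near-singular sums $\sum_p t_p^{-(1-\epsilon)}\leq C\Delta t^{-1}t_{m-i}^{\epsilon}$ by comparison with an integral, in the spirit of \lemref{Stiglemma}.
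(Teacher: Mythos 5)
Your proof is correct, and it rests on exactly the paper's decomposition: your telescoping sum $\sum_{k=i}^m U_h(t_m,t_k)\,E_k\bigl(\prod_{j=i-1}^{k-2}e^{\Delta tA_{h,j}}\bigr)$ is the paper's identity \eqref{fonda1} after reindexing, and both proofs then trade the gain $\Delta t^{1+\alpha}$ from each difference $E_k$ (\lemref{smoothing1}) against one summable singular kernel. The only real divergence is in part (i), where the placement of the fractional powers is mirrored: the paper keeps the evolution factor bounded and pulls the kernel $t_{k-1}^{-1+\epsilon}$ out of the \emph{frozen product}, pairing \lemref{smoothing1}(ii) with \eqref{comp1} (see \eqref{fonda6}), whereas you keep the frozen product bounded and pull the kernel $t_{m-k}^{-(1-\epsilon)}$ out of the \emph{evolution operator}, pairing the left-sided estimate \lemref{smoothing1}(iv) with \eqref{ae3}. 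Both close the sum at $C\Delta t^{1-\epsilon}$; the only cost of your placement is that the degenerate endpoint moves from the paper's first term (empty frozen product, difference $E_i$) to your last term ($U_h(t_m,t_m)=\mathbf{I}$, difference $E_m$), which you correctly isolate and bound by $C\Delta t$. For part (ii) the paper gives no details beyond the remark that the proof is ``similar to that of (i) using \eqref{comp1a} and \lemref{Stiglemma}''; your argument --- $\alpha=1$ in \lemref{smoothing1}(ii), then \eqref{comp1a} with $\gamma_1=1$, $\gamma_2=\epsilon$, the single-factor fallback at $k=i+1$, and the separate $k=i$ term via $\alpha=\epsilon$ --- is precisely the completion that hint calls for, so there you are on the paper's intended route and in fact supply bookkeeping the paper omits.
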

\begin{proof} 
 First of all note that
 {\small
\begin{eqnarray}
\label{idem}
\left(\prod_{j=i}^mU_h(t_j,t_{j-1})\right)-\left(\prod_{j=i-1}^{m-1}
e^{\Delta tA_{h,j}}\right)=\left(\prod_{j=i}^mU_h(t_j,t_{j-1})\right)-\left(\prod_{j=i}^{m}
e^{\Delta tA_{h,j-1}}\right).
\end{eqnarray}
}
Using the telescopic sum, \eqref{idem} can be rewritten as follows
\begin{eqnarray}
\label{fonda1}
&&\left(\prod_{j=i}^mU_h(t_j,t_{j-1})\right)-\left(\prod_{j=i}^{m}
e^{\Delta tA_{h,j-1}}\right)\nonumber\\
&=&\sum_{k=1}^{m-i+1}\left(\prod_{j=i+k}^mU_h(t_j,t_{j-1})\right)\left(U_h\left(t_{i+k-1}, t_{i+k-2}\right)-e^{\Delta tA_{h,i+k-2}}\right)\nonumber\\
&&.\left(\prod_{j=i}^{i+k-2}e^{\Delta tA_{h,j-1}}\right).
\end{eqnarray}
Writing down explicitly the first term of \eqref{fonda1} gives
\begin{eqnarray}
\label{fonda2}
&&\left(\prod_{j=i}^mU_h(t_j,t_{j-1})\right)-\left(\prod_{j=i}^{m}
e^{\Delta tA_{h,j-1}}\right)\nonumber\\
&=&\left(\prod_{j=i+1}^mU_h(t_j, t_{j-1})\right)\left(U_h(t_i,t_{i-1})-e^{\Delta tA_{h,i-1}}\right)\nonumber\\
&+&\sum_{k=2}^{m-i+1}\left(\prod_{j=i+k}^mU_h(t_j,t_{j-1})\right)\left(U_h\left(t_{i+k-1}, t_{i+k-2}\right)-e^{\Delta tA_{h,i+k-2}}\right)\nonumber\\
&&.\left(\prod_{j=i}^{i+k-2}e^{\Delta tA_{h,j-1}}\right).
\end{eqnarray}
Taking the norm in both sides of \eqref{fonda2}, using  \lemref{evolutionlemma}, \lemref{smoothing1} (ii) and \lemref{lemma2} yields
{\small
\begin{eqnarray}
\label{fonda6}
&&\left\Vert\left(\prod_{j=i}^mU_h(t_j,t_{j-1})\right)-\left(\prod_{j=i}^{m}
e^{\Delta tA_{h,j-1}}\right)\right\Vert_{L(H)}\nonumber\\
&\leq& \left\Vert U_h(t_{m-i+1},t_{i})\right\Vert_{L(H)}\left\Vert U_h(t_{i},t_{i-1})-e^{\Delta tA_{h,i-1}}\right\Vert_{L(H)}\nonumber\\
&+&\sum_{k=2}^{m-i+1}\left\Vert U_h(t_{m}, t_{i+k-1})\right\Vert_{L(H)}\left\Vert\left(U_h(t_{i+k-1}, t_{i+k-2})-e^{\Delta tA_{h,i+k-2}}\right)\left(-A_{h,i+k-2}\right)^{-1+\epsilon}\right\Vert_{L(H)}\nonumber\\
&\times&\left\Vert\left(-A_{h,i+k-2}\right)^{1-\epsilon}\left(\prod_{j=i}^{i+k-2}e^{\Delta tA_{h,j-1}}\right)\right\Vert_{L(H)}\nonumber\\
&\leq& C\Delta t+C\sum_{k=2}^{m-i+1}\Delta t^{2-\epsilon}t_{k-1}^{-1+\epsilon}\nonumber\\
&\leq& C\Delta t^{1-\epsilon}.
\end{eqnarray}
}
This completes the proof of (i). The proof of (ii) is similar to that of (i) using \eqref{comp1a} and \lemref{Stiglemma}. 
\end{proof}

With the above preparatory results in hand, we can now prove our main result.
\subsection{Proof of Theorem \ref{mainresult1}}
\label{preuve1}
Using triangle inequality, we split the fully discrete error in two parts as follows.
\begin{eqnarray}
\label{split}
\Vert X(t_m)-X^h_m\Vert_{L^2(\Omega, H)}&\leq& \Vert X(t_m)-X^h(t_m)\Vert_{L^2(\Omega,H)}+\Vert X^h(t_m)-X^h_m\Vert_{L^2(\Omega,H)}\nonumber\\
&=:& V+VI.
\end{eqnarray}
The space error $V$ is estimated in \lemref{spaceerrorlemma}. It remains to estimate the time error $VI$.  Note that the mild solution of \eqref{semi1} can be written as follows.
\begin{eqnarray}
\label{mild5}
X^h(t_m)&=&U_h(t_m,t_{m-1})X^h(t_{m-1})+\int_{t_{m-1}}^{t_m}U_h(t_m,s)P_hF\left(s,X^h(s)\right)ds\nonumber\\
&+&\int_{t_{m-1}}^{t_m}U_h(t_m,s)P_hB\left(s,X^h(s)\right)dW(s).
\end{eqnarray}
Iterating the mild solution \eqref{mild5} yields
{\small
\begin{eqnarray}
\label{mild6}
&&X^h(t_m)\nonumber\\
&=&\left(\prod_{j=1}^mU_h(t_j, t_{j-1})\right)P_hX_0+\int_{t_{m-1}}^{t_m}U_h(t_m,s)P_hF\left(s,X^h(s)\right)ds\nonumber\\
&+&\int_{t_{m-1}}^{t_m}U_h(t_m,s)P_hB\left(s,X^h(s)\right)dW(s)\nonumber\\
&+&\sum_{k=1}^{m-1}\int_{t_{m-k-1}}^{t_{m-k}}\left(\prod_{j=m-k+1}^{m}U_h(t_j,t_{j-1})\right)U_h(t_{m-k},s)P_hF\left(s, X^h(s)\right)ds\nonumber\\
&+&\sum_{k=1}^{m-1}\int_{t_{m-k-1}}^{t_{m-k}}\left(\prod_{j=m-k+1}^{m}U_h(t_j,t_{j-1})\right)U_h(t_{m-k},s)P_hB\left(s,X^h(s)\right)dW(s).
\end{eqnarray}
}
Iterating the numerical scheme \eqref{scheme4} by  substituting $X^h_j$, $j=m-1, \cdots,1$ only in the first term of \eqref{scheme4} by their expressions yields
{\small
\begin{eqnarray}
\label{num2}
X^h_m&=&\left(\prod_{j=0}^{m-1}e^{\Delta tA_{h,j}}\right)X^h_0+\int_{t_{m-1}}^{t_m}e^{(t_m-s)A_{h,m-1}}P_hF\left(t_{m-1}, X^h_{m-1}\right)ds\nonumber\\
&+&\int_{t_{m-1}}^{t_m}e^{\Delta tA_{h,m-1}}P_hB\left(t_{m-1},X^h_{m-1}\right)dW(s)\nonumber\\
&+&\sum_{k=1}^{m-1}\int_{t_{m-k-1}}^{t_{m-k}}\left(\prod_{j=m-k}^{m-1}e^{\Delta tA_{h,j}}\right)e^{(t_{m-k}-s)A_{h,m-k-1}}P_hF\left(t_{m-k-1},X^h_{m-k-1}\right)ds\nonumber\\
&+&\sum_{k=1}^{m-1}\int_{t_{m-k-1}}^{t_{m-k}}\left(\prod_{j=m-k}^{m-1}e^{\Delta tA_{h,j}}\right)e^{\Delta tA_{h,m-k-1}}P_hB\left(t_{m-k-1}, X^h_{m-k-1}\right)dW(s).
\end{eqnarray}
}
Substracting \eqref{num2} from \eqref{mild6} yields
{\small
\begin{eqnarray}
\label{refait1}
&&X^h(t_m)-X^h_m\nonumber\\
&=&\left(\prod_{j=1}^mU_h(t_j, t_{j-1})\right)P_hX_0-\left(\prod_{j=0}^{m-1}e^{\Delta tA_{h,j}}\right)P_hX_0\nonumber\\
&+&\int_{t_{m-1}}^{t_m}U_h(t_m,s)P_hF\left(s,X^h(s)\right)ds-\int_{t_{m-1}}^{t_m}e^{(t_m-s)A_{h,m-1}}P_hF\left(t_{m-1}, X^h_{m-1}\right)ds\nonumber\\
&+&\int_{t_{m-1}}^{t_m}U_h(t_m,s)P_hB\left(s,X^h(s)\right)dW(s)-\int_{t_{m-1}}^{t_m}e^{\Delta tA_{h,m-1}}P_hB\left(t_{m-1}, X^h_{m-1}\right)dW(s)\nonumber\\
&+&\sum_{k=1}^{m-1}\int_{t_{m-k-1}}^{t_{m-k}}\left(\prod_{j=m-k+1}^{m}U_h(t_j,t_{j-1})\right)U_h(t_{m-k},s)P_hF\left(s, X^h(s)\right)ds\nonumber\\
&&-\sum_{k=1}^{m-1}\int_{t_{m-k-1}}^{t_{m-k}}\left(\prod_{j=m-k}^{m-1}e^{\Delta tA_{h,j}}\right)e^{(t_{m-k}-s)A_{h,m-k-1}}P_hF\left(t_{m-k-1},X^h_{m-k-1}\right)ds\nonumber\\
&+&\sum_{k=1}^{m-1}\int_{t_{m-k-1}}^{t_{m-k}}\left(\prod_{j=m-k+1}^{m}U_h(t_j,t_{j-1})\right)U_h(t_{m-k},s)P_hB\left(s,X^h(s)\right)dW(s)\nonumber\\
&&-\sum_{k=1}^{m-1}\int_{t_{m-k-1}}^{t_{m-k}}\left(\prod_{j=m-k}^{m-1}e^{\Delta tA_{h,j}}\right)e^{\Delta tA_{h,m-k-1}}P_hB\left(t_{m-k-1}, X^h_{m-k-1}\right)dW(s)\nonumber\\
&=:&VI_1+VI_2+VI_3+VI_4+VI_5.\nonumber\\
\end{eqnarray}
}
Taking the norm in both sides of \eqref{refait1} yields 
\begin{eqnarray}
\label{refait2}
\Vert X^h(t_m)-X^h_m\Vert^2_{L^2(\Omega,H)}\leq 25\sum_{i=1}^5\Vert VI_i\Vert^2_{L^2(\Omega,H)}.
\end{eqnarray}
In what follows, we estimate separately $\Vert VI_i\Vert_{L^2(\Omega,H)}$, $i=1,\cdots, 5$.
 \subsubsection{Estimate of $VI_1$, $VI_2$ and $VI_3$}
 Using \lemref{fonda}, it holds that
 \begin{eqnarray}
 \label{multi1}
 \Vert VI_1\Vert_{L^2(\Omega,H)}&\leq& \left\Vert \left(\prod_{j=1}^mU_h(t_j, t_{j-1})\right)-\left(\prod_{j=0}^{m-1}e^{\Delta tA_{h,j}}\right)\right\Vert_{L(H)}\Vert X_0\Vert_{L^2(\Omega,H)}\nonumber\\
 &\leq& C\Delta t^{1-\epsilon}.
 \end{eqnarray}
 Using triangle inequality, \eqref{smooth2}, \lemref{evolutionlemma}, \assref{assumption3} and \thmref{theorem1}, it holds that
 {\small
 \begin{eqnarray}
 \label{multi2}
 \Vert VI_2\Vert_{L^2(\Omega,H)}&\leq& \int_{t_{m-1}}^{t_m}\left\Vert U_h(t_m,s)P_hF\left(s,X^h(s)\right)\right\Vert_{L^2(\Omega,H)}ds\nonumber\\
 &+&\int_{t_{m-1}}^{t_m}\left\Vert e^{(t_m-s)A_{h,m-1}}\left[P_hF\left(t_{m-1},X^h_{m-1}\right)-P_hF\left(t_{m-1}, X^h(t_{m-1})\right)\right]\right\Vert_{L^2(\Omega,H)}ds\nonumber\\
 &+&\int_{t_{m-1}}^{t_m}\left\Vert e^{(t_m-s)A_{h,m-1}}P_hF\left(t_{m-1}, X^h(t_{m-1})\right)\right\Vert_{L^2(\Omega,H)}ds\nonumber\\
 &\leq& C\int_{t_{m-1}}^{t_m}ds+C\int_{t_{m-1}}^{t_m}\Vert X^h(t_{m-1})-X^h_{m-1}\Vert_{L^2(\Omega,H)}ds+C\int_{t_{m-1}}^{t_m}ds\nonumber\\
 &\leq& C\Delta t+C\Delta t\Vert X^h(t_{m-1})-X^h_{m-1}\Vert_{L^2(\Omega,H)}.
 \end{eqnarray}
 }
Applying the It\^{o}-isometry property, using \assref{assumption4}, \eqref{smooth2}, \thmref{theorem1} and \lemref{evolutionlemma} yields 
{\small
 \begin{eqnarray}
 \label{multi3}
 \Vert VI_3\Vert^2_{L^2(\Omega,H)}&\leq& 9\int_{t_{m-1}}^{t_m}\mathbb{E}\left\Vert U_h(t_m,s)P_hB\left(s,X^h(s)\right)\right\Vert^2_{L^0_2}ds\nonumber\\
 &+&9\int_{t_{m-1}}^{t_m}\mathbb{E}\left\Vert e^{\Delta tA_{h,m-1}}\left[P_hB\left(t_{m-1},X^h_{m-1}\right)-P_hB\left(t_{m-1}, X^h(t_{m-1})\right)\right]\right\Vert^2_{L^0_2}ds\nonumber\\
 &+&9\int_{t_{m-1}}^{t_m}\mathbb{E}\left\Vert e^{\Delta tA_{h,m-1}}P_hF\left(t_{m-1}, X^h(t_{m-1})\right)\right\Vert^2_{L^0_2}ds\nonumber\\
 &\leq& C\int_{t_{m-1}}^{t_m}ds+C\int_{t_{m-1}}^{t_m}\Vert X^h(t_{m-1})-X^h_{m-1}\Vert^2_{L^2(\Omega,H)}ds+C\int_{t_{m-1}}^{t_m}ds\nonumber\\
 &\leq& C\Delta t+C\Delta t\Vert X^h(t_{m-1})-X^h_{m-1}\Vert^2_{L^2(\Omega,H)}.
 \end{eqnarray}
 }
\subsubsection{Estimate of $VI_4$} To estimate $VI_4$, we split it in five terms as follows.
{\small
\begin{eqnarray}
\label{eza1}
&&VI_{4}\nonumber\\
&=&\sum_{k=1}^{m-1}\int_{t_{m-k-1}}^{t_{m-k}}\left(\prod_{j=m-k+1}^mU_h(t_j,t_{j-1})\right)U_h(t_{m-k},s)\left[P_hF\left(s,X^h(s)\right)-P_hF\left(t_{m-k-1},X^h(t_{m-k-1})\right)\right]ds\nonumber\\
&+&\sum_{k=1}^{m-1}\int_{t_{m-k-1}}^{t_{m-k}}\left(\prod_{j=m-k+1}^mU_h(t_j,t_{j-1})\right)\left[U_h(t_{m-k},s)-U_h(t_{m-k}, t_{m-k-1})\right]P_hF\left(t_{m-k-1},X^h(t_{m-k-1})\right)ds\nonumber\\
&+&\sum_{k=1}^{m-1}\int_{t_{m-k-1}}^{t_{m-k}}\left[\left(\prod_{j=m-k}^mU_h(t_j,t_{j-1})\right)-\left(\prod_{j=m-k-1}^{m-1}e^{\Delta tA_{h,j}}\right)\right]P_hF\left(t_{m-k-1},X^h(t_{m-k-1})\right)ds\nonumber\\
&+&\sum_{k=1}^{m-1}\int_{t_{m-k-1}}^{t_{m-k}}\left(\prod_{j=m-k}^{m-1}e^{\Delta tA_{h,j}}\right)\left(e^{\Delta tA_{h,m-k-1}}-e^{(t_{m-k}-s)A_{h,m-k-1}}\right)P_hF\left(t_{m-k-1},X^h(t_{m-k-1})\right)ds\nonumber\\
&+&\sum_{k=1}^{m-1}\int_{t_{m-k-1}}^{t_{m-k}}\left(\prod_{j=m-k}^{m-1}e^{\Delta tA_{h,j}}\right)e^{(t_{m-k}-s)A_{h,m-k-1}}\left[P_hF\left(t_{m-k-1},X^h(t_{m-k-1})\right)-P_hF\left(t_{m-k-1},X^h_{m-k-1}\right)\right]ds\nonumber\\
&=:&VI_{41}+VI_{42}+VI_{43}+VI_{44}+VI_{45}.
\end{eqnarray}
}
Using \lemref{evolutionlemma}, \assref{assumption3} and \lemref{regularitylemma} yields
{\small
\begin{eqnarray}
\label{eza2}
&&\Vert VI_{41}\Vert_{L^2(\Omega,H)}\nonumber\\
&\leq& C\sum_{k=1}^{m-1}\int_{t_{m-k-1}}^{t_{m-k}}\left\Vert P_hF\left(s,X^h(s)\right)-P_hF\left(t_{m-k-1},X^h(t_{m-k-1})\right)\right\Vert_{L^2(\Omega,H)}ds\nonumber\\
&\leq&C\sum_{k=1}^{m-1}\int_{t_{m-k-1}}^{t_{m-k}}(s-t_{m-k-1})^{\beta/2}ds+C\sum_{k=1}^{m-1}\int_{t_{m-k-1}}^{t_{m-k}}\Vert X^h(s)-X^h(t_{m-k-1})\Vert_{L^2(\Omega,H)}ds\nonumber\\
&\leq& C\Delta t^{\beta/2}+\sum_{k=1}^{m-1}\int_{t_{m-k-1}}^{t_{m-k}}(s-t_{m-k-1})^{\min(\beta,1)/2}ds\nonumber\\
&\leq& C\Delta t^{\min(\beta,1)/2}.
\end{eqnarray}
}
Using \lemref{evolutionlemma},  \assref{assumption3} and \thmref{theorem1} gives
{\small
\begin{eqnarray}
\label{eza3}
&&\Vert VI_{42}\Vert_{L^2(\Omega,H)}\nonumber\\
&\leq&C\sum_{k=1}^{m-1}\int_{t_{m-k-1}}^{t_{m-k}}\left\Vert U_h(t_m,t_{m-k})U_h(t_{m-k},s)(\mathbf{I}-U_h(s,t_{m-k-1})\right\Vert_{L(H)}\nonumber\\
&\times&\left\Vert P_hF\left(t_{m-k-1}, X^h(t_{m-k-1})\right)\right\Vert_{L^2(\Omega,H)}ds\nonumber\\
&\leq& C\sum_{k=1}^{m-1}\int_{t_{m-k-1}}^{t_{m-k}}\left\Vert U_h(t_m,t_{m-k})(-A_{h,m-k})^{1-\epsilon}\right\Vert_{L(H)}\left\Vert (-A_{h,m-k})^{-1+\epsilon} U_h(t_{m-k},s)(-A_{h,m-k})^{1-\epsilon}\right\Vert_{L(H)}\nonumber\\
&\times&\left\Vert(-A_{h,m-k})^{-1+\epsilon}\left(\mathbf{I}-U_h(s,t_{m-k-1})\right)\right\Vert_{L(H)}ds\nonumber\\
&\leq& C\sum_{k=1}^{m-1}\int_{t_{m-k-1}}^{t_{m-k}}(t_m-t_{m-k})^{-1+\epsilon}(s-t_{m-k-1})^{1-\epsilon}ds\nonumber\\
&\leq& C\Delta t^{1-\epsilon}\sum_{k=1}^{m-1}\int_{t_{m-k-1}}^{t_{m-k}}t_k^{-1+\epsilon}ds\nonumber\\
&\leq&C\Delta t^{1-\epsilon}\sum_{k=1}^{m-1}\Delta tt_k^{-1+\epsilon}\nonumber\\
&\leq& C\Delta t^{1-\epsilon}.
\end{eqnarray}
}
Using  \lemref{lemma2}, \assref{assumption3}, \thmref{theorem1}, \eqref{smooth2} and \eqref{smooth1} yields
\begin{eqnarray}
\label{eza4}
&&\Vert VI_{43}\Vert_{L^2(\Omega,H)}\nonumber\\
&\leq& \sum_{k=1}^{m-1}\int_{t_{m-k-1}}^{t_{m-k}}\left\Vert\left(\prod_{j=m-k}^{m-1}e^{\Delta tA_{h,j}}\right)\left(e^{(s-t_{m-k-1})A_{h,m-k-1}}-\mathbf{I}\right)e^{(t_{m-k}-s)A_{h,m-k-1}}\right\Vert_{L(H)}\nonumber\\
&\times& \left\Vert P_hF\left(t_{m-k-1},X^h(t_{m-k-1})\right)\right\Vert_{L^2(\Omega,H)}ds\nonumber\\
&\leq& C\sum_{k=1}^{m-1}\int_{t_{m-k-1}}^{t_{m-k}}\left\Vert\left(\prod_{j=m-k}^{m-1}e^{\Delta tA_{h,j}}\right)\left(-A_{h,m-k-1}\right)^{1-\epsilon}\right\Vert_{L(H)}\nonumber\\
&\times&\left\Vert\left(-A_{h,m-k-1}\right)^{-1+\epsilon}\left(e^{(s-t_{m-k-1})A_{h,m-k-1}}-\mathbf{I}\right)\right\Vert_{L(H)}\left\Vert e^{(t_{m-k}-s)A_{h,m-k-1}}\right\Vert_{L(H)}ds\nonumber\\
&\leq& C\sum_{k=1}^{m-1}\int_{t_{m-k-1}}^{t_{m-k}}t_k^{-1+\epsilon}(s-t_{m-k-1})^{1-\epsilon}ds\nonumber\\
&\leq& C\Delta t^{1-\epsilon}\sum_{k=1}^{m-1}\int_{t_{m-k-1}}^{t_{m-k}}t_k^{-1+\epsilon}\Delta t\nonumber\\
&\leq& C\Delta t^{1-\epsilon}.
\end{eqnarray}
Using \lemref{lemma2}, \eqref{smooth2}, \eqref{smooth1}, \assref{assumption3} and \lemref{evolutionlemma} yields
\begin{eqnarray}
\label{eza5}
&&\Vert VI_{44}\Vert_{L^2(\Omega,H)}\nonumber\\
&\leq& \sum_{k=1}^{m-1}\int_{t_{m-k-1}}^{t_{m-k}}\left\Vert\left(\prod_{j=m-k}^{m-1}e^{\Delta tA_{h,j}}\right)\left(\mathbf{I}-e^{(s-t_{m-k-1})A_{h,m-k-1}}\right)e^{(t_{m-k}-s)A_{h,m-k-1}}\right\Vert_{L(H)}\nonumber\\
&\times& \left\Vert P_hF\left(t_{m-k-1}, X^h(t_{m-k-1})\right)\right\Vert_{L^2(\Omega,H)}ds\nonumber\\
&\leq&C \sum_{k=1}^{m-1}\int_{t_{m-k-1}}^{t_{m-k}}\left\Vert\left(\prod_{j=m-k}^{m-1}e^{\Delta tA_{h,j}}\right)\left(-A_{h,m-k}\right)^{1-\epsilon}\right\Vert_{L(H)}\nonumber\\
&\times&\left\Vert\left(-A_{h,m-k}\right)^{-1+\epsilon}\left(\mathbf{I}-e^{(s-t_{m-k-1})A_{h,m-k-1}}\right)\right\Vert_{L(H)}\left\Vert e^{(t_{m-k}-s)A_{h,m-k-1}}\right\Vert_{L(H)}\nonumber\\
&\leq& C\sum_{k=1}^{m-1}\int_{t_{m-k-1}}^{t_{m-k}}t_k^{-1+\epsilon}(s-t_{m-k-1})^{1-\epsilon}ds\nonumber\\
&\leq& C\Delta t^{1-\epsilon}\sum_{k=1}^{m-1}\int_{t_{m-k-1}}^{t_{m-k}}t_k^{-1+\epsilon}ds\nonumber\\
&\leq& C\Delta t^{1-\epsilon}.
\end{eqnarray}
Using  \lemref{lemma2} and \assref{assumption3} yields
\begin{eqnarray}
\label{eza6}
\Vert VI_{45}\Vert_{L^2(\Omega,H)}&\leq& C\sum_{k=1}^{m-1}\int_{t_{m-k-1}}^{t_{m-k}}\Vert X^h\left(t_{m-k-1}\right)-X^h_{m-k-1}\Vert_{L^2(\Omega, H)}\nonumber\\
&\leq& C\Delta t\sum_{k=0}^{m-1}\Vert X^h(t_k)-X^h_k\Vert_{L^2(\Omega,H)}.
\end{eqnarray}
Substituting \eqref{eza6}, \eqref{eza5}, \eqref{eza4}, \eqref{eza3} and \eqref{eza2} in \eqref{eza1} yields
\begin{eqnarray}
\label{multi4}
\Vert VI_4\Vert_{L^2(\Omega,H)}\leq C\Delta t^{\min(\beta,1)/2}+C\Delta t\sum_{k=0}^{m-1}\Vert X^h(t_k)-X^h_k\Vert_{L^2(\Omega,H)}.
\end{eqnarray}
\subsubsection{Estimate of $VI_5$}
To estimate $VI_5$, we split it in four terms as follows
{\small
\begin{eqnarray}
\label{boto1}
&&VI_{5}\nonumber\\
&=&\sum_{k=1}^{m-1}\int_{t_{m-k-1}}^{t_{m-k}}\left(\prod_{j=m-k+1}^mU_h(t_j,t_{j-1})\right)U_h(t_{m-k},s)\left[P_hB\left(s,X^h(s)\right)-P_hB\left(t_{m-k-1},X^h(t_{m-k-1})\right)\right]dW(s)\nonumber\\
&+&\sum_{k=1}^{m-1}\int_{t_{m-k-1}}^{t_{m-k}}\left(\prod_{j=m-k+1}^mU_h(t_j,t_{j-1})\right)\left[U_h(t_{m-k},s)-U_h(t_{m-k}, t_{m-k-1})\right]P_hB\left(t_{m-k-1},X^h(t_{m-k-1})\right)dW(s)\nonumber\\
&+&\sum_{k=1}^{m-1}\int_{t_{m-k-1}}^{t_{m-k}}\left[\left(\prod_{j=m-k}^mU_h(t_j,t_{j-1})\right)-\left(\prod_{j=m-k-1}^{m-1}e^{\Delta tA_{h,j}}\right)\right]P_hB\left(t_{m-k-1},X^h(t_{m-k-1})\right)dW(s)\nonumber\\
&+&\sum_{k=1}^{m-1}\int_{t_{m-k-1}}^{t_{m-k}}\left(\prod_{j=m-k-1}^{m-1}e^{\Delta tA_{h,j}}\right)\left[P_hB\left(t_{m-k-1},X^h(t_{m-k-1})\right)-P_hB\left(t_{m-k-1},X^h_{m-k-1}\right)\right]dW(s)\nonumber\\
&=:&VI_{51}+VI_{52}+VI_{53}+VI_{54}.
\end{eqnarray}
}
Using the It\^{o}-isometry property, \lemref{evolutionlemma}, \assref{assumption4} and \lemref{regularitylemma} yields
\begin{eqnarray}
\label{boto2}
&&\Vert VI_{51}\Vert^2_{L^2(\Omega,H)}\nonumber\\
&=&\sum_{k=1}^{m-1}\int_{t_{m-k-1}}^{t_{m-k}}\mathbb{E}\left\Vert U_h(t_m,s)\left[P_hB\left(s,X^h(s)\right)-P_hB\left(t_{m-k-1}, X^h(t_{m-k-1})\right)\right]\right\Vert^2_{L^0_2}ds\nonumber\\
&\leq&C\sum_{k=1}^{m-1}\int_{t_{m-k-1}}^{t_{m-k}}(s-t_{m-k-1})^{\beta}ds+C\sum_{k=1}^{m-1}\int_{t_{m-k-1}}^{t_{m-k}}\left\Vert X^h(s)-X^h(t_{m-k-1})\right\Vert^2_{L^2(\Omega,H)}ds\nonumber\\
&\leq& C\Delta t^{\beta}+C\sum_{k=1}^{m-1}\int_{t_{m-k-1}}^{t_{m-k}}(s-t_{m-k-1})^{\min(\beta,1)}ds\nonumber\\
&\leq& C\Delta t^{\min(\beta,1)}.
\end{eqnarray}
Applying the It\^{o}-isometry property, using \lemref{evolutionlemma}, \assref{assumption4} and \lemref{regularitylemma} yields
{\small
\begin{eqnarray}
\label{boto3}
&&\Vert VI_{52}\Vert^2_{L^2(\Omega,H)}\nonumber\\
&=&\sum_{k=1}^{m-1}\int_{t_{m-k-1}}^{t_{m-k}}\mathbb{E}\left\Vert U_h(t_m, t_{m-k})U_h(t_{m-k},s)\left(\mathbf{I}-U_h(s, t_{m-k-1})\right)P_hB\left(t_{m-k-1}, X^h(t_{m-k-1})\right)\right\Vert^2_{L^0_2} ds\nonumber\\
&\leq&C\sum_{k=1}^{m-1}\int_{t_{m-k-1}}^{t_{m-k}}\left\Vert U_h(t_m, t_{m-k})\left(-A_{h,m-k}\right)^{\frac{1-\epsilon}{2}}\right\Vert^2_{L(H)}\Vert (-A_{m-k})^{\frac{-1+\epsilon}{2}} U_h(t_{m-k},s)(-A_{h,m-k})^{\frac{1-\epsilon}{2}}\Vert^2_{L(H)}\nonumber\\
&\times&\left\Vert\left(-A_{h,m-k}\right)^{\frac{-1+\epsilon}{2}}\left(\mathbf{I}-U_h(s,t_{m-k-1})\right)\right\Vert^2_{L(H)}ds\nonumber\\
&\leq& C\sum_{k=1}^{m-1}\int_{t_{m-k-1}}^{t_{m-k}}t_k^{-1+\epsilon}(s-t_{m-k-1})^{1-\epsilon}ds\nonumber\\
&\leq& C\Delta t^{1-\epsilon}\sum_{k=1}^{m-1}\int_{t_{m-k-1}}^{t_{m-k}}t_k^{-1+\epsilon}ds\nonumber\\
&\leq& C\Delta t^{1-\epsilon}.\nonumber\\
\end{eqnarray}
}
Applying the It\^{o}-isometry property, using \lemref{fonda}, \assref{assumption4} and \lemref{regularitylemma} yields
\begin{eqnarray}
\label{boto4}
\Vert VI_{53}\Vert^2_{L^2(\Omega,H)}&=&\sum_{k=1}^{m-1}\int_{t_{m-k-1}}^{t_{m-k}}\mathbb{E}\left\Vert\left[\left(\prod_{j=m-k}^mU_h(t_j, t_{j-1})\right)-\left(\prod_{j=m-k-1}^{m-1}e^{\Delta tA_{h,j}}\right)\right]\right.\nonumber\\
&&.\left.P_hB\left(t_{m-k-1}, X^h(t_{m-k-1})\right)\right\Vert^2_{L^0_2}ds\nonumber\\
&\leq& C\sum_{k=1}^{m-1}\int_{t_{m-k-1}}^{t_{m-k}}\Delta t^{1-\epsilon}ds\nonumber\\
&\leq& C\Delta t^{1-\epsilon}.
\end{eqnarray}
Applying the It\^{o}-isometry property, \lemref{lemma2} and \assref{assumption4} yields 
\begin{eqnarray}
\label{boto5}
\Vert VI_{54}\Vert^2_{L^2(\Omega,H)}&=&\sum_{k=1}^{m-1}\int_{t_{m-k-1}}^{t_{m-k}}\mathbb{E}\left\Vert\left(\prod_{j=m-k-1}^{m-1}e^{\Delta tA_{h,j}}\right)\right.\nonumber\\
&&.\left.\left[P_hB\left(t_{m-k-1}, X^h(t_{m-k-1})\right)-P_hB\left(t_{m-k-1}, X^h_{m-k-1}\right)\right]\right\Vert^2_{L^0_2}ds\nonumber\\
&\leq& C\sum_{k=1}^{m-1}\int_{t_{m-k-1}}^{t_{m-k}}\left\Vert X^h(t_{m-k-1})-X^h_{m-k-1}\right\Vert^2_{L^2(\Omega,H)}ds\nonumber\\
&\leq& C\Delta t\sum_{k=0}^{m-1}\Vert X^h(t_k)-X^h_k\Vert^2_{L^2(\Omega,H)}.
\end{eqnarray}
Substituting \eqref{boto5}, \eqref{boto4}, \eqref{boto3} and \eqref{boto2}  in \eqref{boto1} yields
\begin{eqnarray}
\label{multi5}
\Vert VI_5\Vert^2_{L^2(\Omega,H)}\leq C\Delta t^{\min(\beta,1)}+C\Delta t\sum_{k=0}^{m-1}\Vert X^h(t_k)-X^h_k\Vert^2_{L^2(\Omega,H)}.
\end{eqnarray}
Substituting \eqref{multi5}, \eqref{multi4}, \eqref{multi3}, \eqref{multi2} and \eqref{multi1} in \eqref{refait1} yields
\begin{eqnarray}
\label{multi6}
\Vert X^h(t_m)-X^h_m\Vert^2_{L^2(\Omega,H)}\leq C\Delta t^{\min(\beta,1-\epsilon)}+C\Delta t\sum_{k=0}^{m-1}\Vert X^h(t_k)-X^h_k\Vert^2_{L^2(\Omega,H)}.
\end{eqnarray}
Applying the discrete Gronwall's lemma to \eqref{multi6} yields
\begin{eqnarray}
\Vert X^h(t_m)-X^h_m\Vert_{L^2(\Omega,H)}\leq C\Delta t^{\min(\beta, 1-\epsilon)/2}.
\end{eqnarray}
Note that to achieve optimal convergence $1/2$ when $ \beta\geq 1$, we only need to re-estimate $\Vert VI_{52}\Vert_{L^2(\Omega,H)}$ and $\Vert VI_{53}\Vert_{L^2(\Omega,H)}$ 
by using \assref{assumption5} and \lemref{fonda} (ii). This is straightforward. The proof of \thmref{mainresult1} is therefore completed.
 
\section{Numerical experiments}
\label{experiment}
We consider the following  stochastic  reactive dominated advection
diffusion reaction  with  constant diagonal difussion tensor  
\begin{eqnarray}
\label{reactiondif1}
dX=\left[(1+e^{-t}) \left(\varDelta X-\nabla \cdot(\mathbf{q}X)\right)-\dfrac{e^{-t} X}{\vert X\vert +1}\right]dt+XdW,\quad X(0)=0,
\end{eqnarray}
with  mixed Neumann-Dirichlet boundary conditions on $\Lambda=[0,L_1]\times[0,L_2]$. 
The Dirichlet boundary condition is $X=1$ at $\Gamma=\{ (x,y) :\; x =0\}$ and 
we use the homogeneous Neumann boundary conditions elsewhere.
The eigenfunctions $ \{e_{i,j} \} =\{e_{i}^{(1)}\otimes e_{j}^{(2)}\}_{i,j\geq 0}
$ of  the covariance operator $Q$ are the same as for the Laplace operator $-\varDelta$  with homogeneous boundary condition,  given by 
\begin{eqnarray*}
e_{0}^{(l)}(x)=\sqrt{\dfrac{1}{L_{l}}},\qquad 
e_{i}^{(l)}(x)=\sqrt{\dfrac{2}{L_{l}}}\cos\left(\dfrac{i \pi }{L_{l}} x\right),
%\qquad \lambda_{0}^{(l)}=0,\qquad
%\lambda_{i}^{(l)}=\dfrac{i \pi }{L_{l}}
\, i \in \mathbb{N},
\end{eqnarray*}
where $l \in \left\lbrace 1, 2 \right\rbrace,\, x\in \Lambda$.
%with the corresponding eigenvalues $ \{\lambda_{i,j}\}_{i,j\geq 0} $ given by 
%$\lambda_{i,j}= (\lambda_{i}^{(1)})^{2}+ (\lambda_{j}^{(2)})^{2}$.
%We use two types of noise in our simulations. In both examples,
We assume that the noise can be represented as 
 \begin{eqnarray}
  \label{eq:W1}
  W(x,t)=\underset{(i, j) \in  \mathbb{N}^{2}}{\sum}\sqrt{\lambda_{i,j}}e_{i,j}(x)\beta_{i,j}(t), 
\end{eqnarray}
where $\beta_{i,j}(t)$ are
independent and identically distributed standard Brownian motions,  $\lambda_{i,j}$, $(i,j)\in \mathbb{N}^{2}$ are the eigenvalues  of $Q$, with
\begin{eqnarray}
\label{noise2}
 \lambda_{i,j}=\left( i^{2}+j^{2}\right)^{-(\beta +\delta)}, \, \beta>0,
\end{eqnarray} 
in the representation \eqref{eq:W1} for some small $\delta>0$. To obtain trace class noise, it is enough to have $\beta+\delta>1$. In our simulations, we take $\beta\in\{1.5, 2\}$ and $\delta=0.001$.
%For additive noise, we take $b(u)=1$, so \assref{assumption6a} is obviously satisfied for $\beta=(0,2]$. 
In \eqref{nemistekii1}, we take $b(x,u)=4u$, $x\in \Lambda$ and $u\in \mathbb{R}$. Therefore, from \cite[Section 4]{Arnulf1} it follows that the operators  $B$ defined by \eqref{nemistekii1}
fulfills   \assref{assumption4} and \assref{assumption5}.  The function $F$ is given by $F(t,v)= -\dfrac{e^{-t} v}{1+\vert v\vert}$, $t\in[0, T]$, $v\in H$ and obviously satisfies \assref{assumption3}. The nonlinear operator $A(t)$ is given by 
\begin{eqnarray}
A(t)=(1+e^{-t})\left(\varDelta(.)-\nabla.\mathbf{v}(.)\right),\quad t\in[0, T],
\end{eqnarray}
where $\mathbf{v}$ is the Darcy velocity.
%obtained as in \cite{Antonio1}.
We obtain the Darcy velocity field $\mathbf{v}=(q_i)$  by solving the following  system
\begin{equation}
  \label{couple1}
  \nabla \cdot\mathbf{v} =0, \qquad \mathbf{v}=-\mathbf{k} \nabla p,
\end{equation}
with  Dirichlet boundary conditions on 
$\Gamma_{D}^{1}=\left\lbrace 0,L_1 \right\rbrace \times \left[
  0,L_2\right] $ and Neumann boundary conditions on
$\Gamma_{N}^{1}=\left( 0,L_1\right)\times\left\lbrace 0,L_2\right\rbrace $ such that 
\begin{eqnarray*}
p&=&\left\lbrace \begin{array}{l}
1 \quad \text{in}\quad \left\lbrace 0 \right\rbrace \times\left[ 0,L_2\right]\\
0 \quad \text{in}\quad \left\lbrace L_1 \right\rbrace \times\left[ 0,L_2\right]
\end{array}\right. 
\end{eqnarray*}
and $- \mathbf{k} \,\nabla p (\mathbf{x},t)\,\cdot \mathbf{n} =0$ in  $\Gamma_{N}^{1}$. 
Here, we use a constant permeabily tensor $\mathbf{k}$ and have obtained almost a linear presure $p$.
Clearly $\mathcal{D}(A(t))=\mathcal{D}(A(0))$, $t\in[0, T]$ and $\mathcal{D}((-A(t))^{\alpha})=\mathcal{D}((-A(0))^{\alpha})$, $t\in[0, T]$, $0\leq \alpha\leq 1$. 
The  function $q_{ij}(x,t)$ defined in \eqref{family} is given by $q_{ii}(x,t)=1+e^{-t},$ and $q_{ij}(x,t)=0,\, i\neq j$. Since $q_{i i}(x,t)$ is bounded below by $1+e^{-T}$,
it follows that the ellipticity  condition \eqref{ellip} holds and therefore as a consequence of \secref{spacediscretization}, 
it follows that $A(t)$ is sectorial. Obviously \assref{assumption2} is fulfilled. 
\begin{figure}[!ht]
 \begin{center}
 \includegraphics[width=0.45\textwidth]{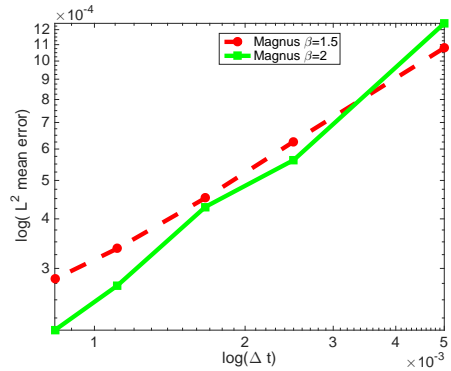}
  \end{center}
 \caption{Convergence of the  implicit scheme for $\beta=1$,  and $\beta=2$ in \eqref{noise2}.
 The order of convergence in time  is $0.57$  for $\beta=1$, $0.54$ for $\beta=2$. The total number of samples used is $100$.}
 \label{FIGII}
 \end{figure}
 
 In \figref{FIGII}, we can observe the convergence of the the stochastic Magnus  scheme for two noise's parameters.
 Indeed the order of convergence in time is $0.57$  for $\beta=1$ and $0.54$ for $\beta=2$. 
 These orders are close to the theoretical  orders $0.5$ obtained in \thmref{mainresult1} for $\beta=1$ and $\beta=2$.


\begin{thebibliography}{00}
\bibitem{Amann}
H. AMANN, 
\newblock{On abstract parabolic fundamental solutions,}
\newblock{ J. Math. Soc. Japan, 39 (1987), pp. 93-116.}

\bibitem{Blanes2}
S. BLANES, F. CASAS, J. A.  OTEO, AND J.  ROS, 
\newblock{The Magnus expansion and some of its applications,}
\newblock{ Physics Reports, 470 (2009), pp. 151-238.}

\bibitem{Blanes1}
S. BLANES, F.  CASAS, J. A.  OTEO, AND J.  ROS, 
\newblock{Magnus and Fer expansion for matrix differential equations : the convergence problem},
\newblock{ J. Phys. A. : Math. Gen., 31 (1998), pp. 259-268.}

\bibitem{Blanes}
S. BLANES AND P. C.  MOAN, 
\newblock{Fourth- and sixth-order commutator-free Magnus integrators for linear and non-linear dynamical systems,}
\newblock{ Appl. Numer. Math.,  56 (2006),  pp. 1519-1537.}

\bibitem{Chow}
P. L. CHOW,
\newblock{Stochastic partial differential equations,}
\newblock{ Chapman \& Hall/CRC. Appl. Math. Nonlinear Sci. ser., 2007.}

\bibitem{Ciarlet}
P. G. CIARLET,
\newblock{The finite element method for elliptic problems,}
\newblock{Amsterdam: North-Holland, 1978.}

\bibitem{Stig1}
C. ELLIOT, AND S.  LARSSON, 
\newblock{Error estimates with smooth and nonsmooth data for a finite element method for the Cahn-Hilliard equation,}
\newblock{ Math. Comput. 58 (1992), pp. 603-630 }

\bibitem{Evans}
 L. C. EVANS, 
\newblock{Partial Differential Equations,}
\newblock{ Grad. Stud., vol. 19, 1997.}

%\bibitem{Engel}
%K. J. Engel,  R.   Nagel, 
%\newblock{One-Parameter semigroup for linear evolution equations,}
%\newblock{\em Springer-Verlag, New york, Inc, 2000.}

\bibitem{Suzuki}
H. FUJITA, AND T. SUZUKI, 
\newblock Evolutions problems (part1),
\newblock in: P.~G. Ciarlet and J.~L. Lions(eds.), 
\newblock  Handb. Numer. Anal., vol. II, North-Holland, (1991), pp.  789-928.


\bibitem{Ostermann1}
C. GONZ\'{A}LEZ,  A.  OSTERMANN, AND M.  THALHMMER, 
\newblock{A second-order Magnus-type integrator for non autonomous parabolic problems,}
\newblock{ J. Comput. Appl. Math.,  189 (2006), pp. 142-156.}

\bibitem{Gonza1}
C. GONZ\'{A}LEZ, A. OSTERMANN,
\newblock{Optimal convergence results for Runge-Kutta discretizations of linear nonautonomous parabolic problems,}
\newblock{BIT 39(1) (1999), pp. 79-95.}

\bibitem{Henry}
  D. HENRY,
\newblock{ Geometric Theory of semilinear parabolic equations,}
\newblock{Lecture notes in Mathematics, vol. 840,  Berlin : Springer, 1981.}


\bibitem{Ostermann2}
D.  HIPP, M.  HOCHBRUCK, AND A.  OSTERMANN, 
\newblock{An exponential integrator for non-autonomous parabolic problems,}
\newblock{ Elect. Trans. on Numer. Anal., 41 (2014), pp. 497-511.}

\bibitem{Hochbruck1}
 M. HOCHBRUCK, AND C. LUBICH, 
\newblock{On Magnus integrators for time-dependent Schr\"{o}dinger equations,}
\newblock{ SIAM. J. Numer. Anal., 41 (2003), pp. 945-963.}

\bibitem{Kaas}
A.  ISERLES,  H. Z.  MUNTHE-KASS,  S. P.  N\O RSETT, AND A.  ZANNA, 
\newblock{Lie group methods,}
\newblock{ Acta Numer., 9 (2000), pp. 215-365.}

\bibitem{Arnulf2}
A. JENTZEN, P. E.   KLOEDEN, AND G.  WINKEL, 
\newblock{Efficient simulation of nonlinear parabolic SPDEs with additive noise,}
\newblock{ Ann. Appl. Probab., 21(3) (2011),  pp. 908-950.}

\bibitem{Arnulf1}
A. JENTZEN, AND M. R\"{O}CKNER, 
\newblock{Regularity analysis for stochastic partial differential
equations with nonlinear multiplicative trace class noise,}
\newblock{J. Differential Equations, 252  (2012), pp. 114-136.}

\bibitem{Kloeden}
P. E. KLOEDEN AND E. PLATEN,
\newblock{Numerical solutions of differential equations,}
\newblock{\em Springer Verlag, 1992.}

\bibitem{Kruse1}
 R. KRUSE, 
\newblock{Optimal error estimates of Galerkin finite element methods for stochastic partial differential equations with multiplicative noise,}
\newblock{IMA J. Numer. Anal., 34 (2014),  pp. 217-251.}

\bibitem{Kovacs1}
 M. KOV\'{A}CS, S.  LARSSON, AND F.  LINDGREN, 
\newblock{Strong convergence of the finite element method with truncated noise for semilinear parabolic stochastic equations with additive noise,} 
\newblock{Numer. Algor.,  53 (2010),  pp. 309-220.}

%\bibitem{Stig3}
%S. Larsson, 
%\newblock{ Semilinear parabolic partial differential equations : theory, approximation, and application,}
%\newblock{In new Trends in the Mathematical and computer sciences,  Cent. Math. Comp. Sci. (ICMCS), Lagos, (2006) pp. 153-194.}

\bibitem{Stig2}
S.  LARSSON, 
\newblock{Nonsmooth data error estimates with applications to the study of the long-time behavior of the finite elements solutions of semilinear parabolic problems,}
\newblock{Preprint 6, Departement of Mathematics, Chalmers University of Technology.  http://www.math.chalmers.se/∼stig/papers/index.html (1992).}
 
 \bibitem{Antonio1}
 G. J. LORD AND A.   TAMBUE, 
 \newblock{Stochastic exponential integrators for the finite element discretization of SPDEs for multiplicative and additive noise,}
\newblock{IMA J.  Numer.  Anal.,  33(2) (2012), pp. 515-543.}

\bibitem{Antonio3}
  G. J. LORD, AND A.   TAMBUE, 
 \newblock{A modified semi-implict Euler-Maruyama scheme for finite element discretization
of SPDEs with additive noise,}
\newblock{Appl. Math. Comput. 332 (2018), pp. 105-122.}


\bibitem{Lu}
 Y. Y. LU, 
\newblock{A fourth-order Magnus scheme for Helmholtz equation,}
\newblock{J. Compt. Appl. Math.,  173 (2005), pp. 247-253.}

%\bibitem{Lunardi}
%A. Lunardi,
%\newblock{Analytic semigroups and optimal regularity in parabolic problems,}
%\newblock{Birkh\"{a}user, Basel, 1995.}

%\bibitem{Luskin}
%M. Luskin, AND R. Rannacher,
%\newblock{On the smoothing property of the Galerkin method for parabolic equations,}
%\newblock{SIAM. J. Numer. Anal., 19(1) (1981), pp. 1-21.}

\bibitem{Magnus}
 M. MAGNUS, 
\newblock{On the exponential solution of a differential equation for a linear operator,}
\newblock{Comm. Pure Appl. Math.,  7 (1954), pp. 649-673 }

%\bibitem{Thomee1}
%H.  Mingyou AND V. Thomee, 
%\newblock{Some Convergence Estimates for Semidiscrete Type  Schemes for Time-Dependent Nonselfadjoint Parabolic Equations,}
%\newblock{Math. Comp., 37 (1981), pp. 327-346.}

\bibitem{Antjd1}
J. D.  MUKAM AND A.  TAMBUE, 
\newblock{Strong convergence analysis of the stochastic exponential Rosenbrock scheme for the finite element discretization of semilinear SPDEs driven by multiplicative and additive noise,}
\newblock{J. Sci. Comput. 74 (2018), pp. 937-978.}



\bibitem{Nambu}
 T. NAMBU, 
\newblock{Characterization of the Domain of Fractional Powers of a Class of Elliptic Differential Operators with Feedback Boundary Conditions,}
\newblock{J. Diff. Eq., 136 (1997), pp. 294-324.}

\bibitem{Pazy}
A. PAZY, 
\newblock{Semigroup of Linear Operators and Applications to Partial Differential Equations,}
\newblock{Springer, new York, 1983.}

\bibitem{Prato}
 D. G. PRATO AND J. ZABCZYK, 
\newblock{Stochastic equations in infinite dimensions,}
\newblock{Encyclopedia of Mathematics and its Applications, vol. 44, Cambridge : Cambridge University press, 1992.}

\bibitem{Prevot}
 C. PR\'{E}V\^{O}T AND M.    R\"{O}CKNER, 
\newblock{ A Concise Course on Stochastic Partial Differential Equations,}
\newblock{Lecture Notes in Mathematics, vol. 1905, Springer, Berlin, 2007.}

\bibitem{Seely}
 R. SEELY,
\newblock{Norms and domains of the complex powers $A_{B^z}$,}
\newblock{Amer. J. Math., 93 (1971), pp. 299-309.}

 
 \bibitem{Praha}
 J. SEIDLER, 
 \newblock{Da Prato-Zabczyk's maximal inequality revisited I,}
 \newblock{Math. Bohem.,  118(1) (1993), pp. 67-106.}
 
%\bibitem{Antjd3}
%Tambue, A., Mukam, J. D.:
%\newblock{Strong convergence of the linear implicit Euler method for the finite discretization of semilinear SPDEs driven by multiplicative and additive noise.}
%\newblock{ In preparation} 

%\bibitem{Antjd2}
%A. Tambue AND J. D. Mukam,
%\newblock{A note on exponential integrators for the finite element discretization of semilinear SPDEs driven by additive noise,}
%\newblock{Submitted.}
% 
\bibitem{Antjd2}
A. TAMBUE AND J. D. MUKAM,
\newblock{Convergence analysis of the Magnus-Rosenbrock type method for the finite element discretization of semilinear non autonomous parabolic PDE with nonsmooth initial data,}
\newblock{https://arxiv.org/abs/1809.03227v1,  2018}
 \bibitem{Antonio2}
 A. TAMBUE AND J. M. T. NGNOTCHOUYE, 
 \newblock{Weak convergence for a stochastic exponential integrator and finite element discretization of stochastic partial differential equation with multiplicative \& additive noise,}
 \newblock{Appl. Numer. Math., 108 (2016), pp. 57-86.}
 
 \bibitem{Tanabe}
 H. TANABE, 
 \newblock{Equations of Evolutions,}
 \newblock{Pitman, London,  1979.}
% 
% \bibitem{Thomee}
%Thom\'{e}e, V.:
% \newblock{Galerkin Finite Element Methods for Parabolic Problems,}
% \newblock{\em Springer Series in Computational Mathematics, Vol. 25. Berlin : Springer (1997)}
% 
 \bibitem{Xiaojie2}
  X. WANG, 
 \newblock{Strong convergence rates of the linear implicit Euler method for the finite element discretization of SPDEs with additive noise,}
 \newblock{IMA J. Numer. Anal.,  37(2) (2017), pp. 965-984.}
 
 \bibitem{Xiaojie1}
 X.  WANG AND Q.  RUISHENG,
 \newblock{A note on an accelerated exponential Euler method for
parabolic SPDEs with additive noise,}
\newblock{Appl. Math. Lett.,  46 (2015), pp. 31-37 }

%\bibitem{Xiaojie3}
%X. Wang, 
%\newblock{An exponential integrator scheme for time discretization of nonlinear wave equation.}
%\newblock{J. Sci. Comput.,  64 (2015), pp. 234-263.}

\bibitem{Yan1}
 Y. YAN,  
\newblock{Galerkin finite element methods for stochastic parabolic partial differential equations,}
\newblock{SIAM J. Num. Anal.,  43(4) (2005), pp. 1363-1384.}
 \end{thebibliography}
\end{document}